\documentclass[11pt,leqno]{article}

\usepackage{amsmath,amsthm,amssymb}
\usepackage{authblk,subfigure,booktabs}
\usepackage{enumerate}
\usepackage{graphicx}
\usepackage{color}

\usepackage{mathabx,bm}
\usepackage{algorithm}
\usepackage[noend]{algpseudocode}
\usepackage{url}
\usepackage{enumitem}

\usepackage{geometry}
\usepackage{flafter,placeins}
\usepackage{listings}

\usepackage{apptools}

\theoremstyle{plain}
\newtheorem{thm}{Theorem}[section]
\newtheorem{prop}[thm]{Proposition}
\newtheorem{cor}[thm]{Corollary}
\newtheorem{lem}[thm]{Lemma}

\theoremstyle{remark}
\newtheorem{rem}[thm]{Remark}

\theoremstyle{definition}

\makeatletter

\numberwithin{equation}{section}
\numberwithin{figure}{section}
\numberwithin{table}{section}

\title{\Large\bf Convergence of kernel and neural-network methods \\ for Hamilton--Jacobi--Bellman equations on unbounded domains}

\author{Yumiharu Nakano\thanks{E-mail: nakano@comp.isct.ac.jp}}
\affil{Department of Mathematical and Computing Science \protect \\ Institute of Science Tokyo}

\date{October 7, 2026}

\begin{document}

\maketitle

\begin{abstract}
We derive error bounds from empirical squared residuals for two non-monotone,
mesh-free approximations of Hamilton--Jacobi--Bellman equations on unbounded
spatial domains: collocation with Wendland kernels under a native space norm
constraint, and physics-informed neural networks with smooth activations under
a Sobolev-norm constraint. The main step is to recover the pointwise residual
control needed for viscosity-solution error estimates from the empirical loss,
while quantifying the discretization or sampling error. For the kernel method,
we prove uniform convergence on compact sets under classical and native space
regularity and derive an a posteriori quantitative error bound for the unique
bounded continuous viscosity solution. For the neural-network method, we prove
uniform convergence on compact sets in probability and give a quantitative
high-probability error bound under classical Sobolev regularity. The bounds are
governed by the fill distance for kernels and by the network and sample sizes
for neural networks. The proofs combine empirical-to-population estimates, an
interpolation inequality that turns an $L^2$ residual into a sup norm bound,
and a controlled diffusion estimate for the error caused by truncating the domain.
The regularity assumptions used to obtain convergence from approximation
properties are additional to viscosity wellposedness. Numerical experiments
in one and two space dimensions illustrate the methods and the estimates.
\end{abstract}

\smallskip
\noindent\textbf{Key words}:
Hamilton--Jacobi--Bellman equations, viscosity solutions,
non-monotone approximation schemes, kernel collocation,
physics-informed neural networks.\par
\smallskip
\noindent\textbf{AMS MSC 2020}:
35K55, 65M12, 65M70.\par


\section{Introduction}\label{sec:1}

We study non-monotone approximation schemes, based on minimizing an $L^2$-type residual, for
terminal value problems of Hamilton--Jacobi--Bellman (HJB) equations
\pagebreak[2]
\begin{equation}
\label{eq:1.1}
\left\{
\begin{aligned}
 &\partial_t v + \inf_{a\in A}H^a(t,x,D_xv(t,x),D^2_{xx}v(t,x))=0, \quad (t,x)\in
 [0,T)\times \mathbb{R}^d, \\
 &v(T,x)=g(x), \quad x\in\mathbb{R}^d,
\end{aligned}
\right.
\end{equation}
where $D_xv$ and $D^2_{xx}v$ denote the gradient and Hessian with respect to $x$, respectively, and
\[
 H^a(t,x,p,X)=\tfrac12\mathrm{tr}(\sigma\sigma^{\mathsf T}(t,x,a)X)+b(t,x,a)\cdot p+f(t,x,a),\qquad a\in A,
\]
is the Hamiltonian of a controlled diffusion. Here, $\mathrm{tr}$ denotes the matrix trace,
the superscript $\mathsf T$ denotes matrix transpose, and $A\subset\mathbb{R}^{d_1}$ is a nonempty compact
control set for some $d_1\in\mathbb{N}$. Its value function is generally only a viscosity
solution \cite{fle-son:2006,pha:2009}. Our residual-to-error estimate and the kernel a posteriori
bound apply directly to this viscosity solution. The convergence results based on approximating the
exact solution require, in addition, a classical solution with the stated native space or Sobolev regularity.

Classical methods for \eqref{eq:1.1} include finite differences \cite{kus-dup:2001,bon-zid:2003},
semi-Lagrangian schemes \cite{cam-fal:1995,deb-jak:2012}, probabilistic methods
\cite{pag-pha-pri:2004,fah-tou-war:2011,guo-etal:2015,nak:2014b,aki-fod:2018}, and kernel collocation
\cite{kan:1990b,nak:2017}. See \cite{fen-glo-nei:2013,nei-sal-zha:2017} for surveys of numerical
methods for fully nonlinear second-order equations. More recent developments include narrow-stencil
finite differences \cite{fen-lew:2021}, higher-order backward differentiation schemes under stronger
regularity \cite{bok-pic-rei:2021}, and boundary treatments and multigrid solvers for semi-Lagrangian
schemes \cite{rei-rot:2017}. Convergence to
the viscosity solution is usually derived from the framework of Barles and Souganidis
\cite{bar-sou:1991}, which requires monotonicity. For non-monotone schemes, rigorous theories exist
in restricted settings: discontinuous Galerkin methods with optimal $H^2$ rates for HJB equations
under the Cord{\`e}s condition \cite{sme-sul:2013,sme-sul:2014,sme-sul:2016}, and
$L^{\infty}$-convergent minimal-residual methods based on the Alexandrov--Bakelman--Pucci principle
\cite{gal-tra:2024}. These theories are
confined to bounded domains, to the Cord{\`e}s or ABP operator classes, and to mesh-based
discretizations.

Among mesh-free methods, deep BSDE schemes and related approaches use stochastic representations
\cite{e-etal:2017,han-jen-e:2018,hur-pha-war:2020,bac-hur-pha-lan:2022,hut-jen-kru-ngu:2020},
whereas physics-informed neural networks (PINNs) and the deep Galerkin method (DGM) minimize an
empirical PDE residual \cite{sir-spi:2018,rai-per-kar:2019}. Convergence is known for quasilinear
equations under uniform $C^{1,2}$-bounds, and error estimates have been developed for linear and
semilinear problems \cite{sir-spi:2018,mis-mol:2022,dr-mis:2022,dr-mis:2024}. Recent theories based
on coercivity, optimal recovery or $\Gamma$-convergence likewise concern linear elliptic and
parabolic equations \cite{zei-mas-mar:2025,bon-dev-pet-sie:2026,gaz-gka-mak:2025}.

For HJB equations, available analyses treat time-stepping meshfree collocation
\cite{nak:2017}, DGM for finite-state mean-field control \cite{hof-sun-coh:2024}, RKHS recovery for
first-order infinite-horizon problems \cite{ehr-azm-haa:2026}, or neural policy iteration through a
sequence of linear PDEs \cite{men-etal:2024,kim-etal:2026pi}. Residual-to-error certificates are also
available for stationary first-order HJB equations \cite{liu:2026}. These settings differ from the
direct minimization of an empirical residual for the finite-horizon second-order HJB equation on
$\mathbb{R}^d$ considered here. Building on the pointwise-consistency and controlled-diffusion
truncation framework of \cite{nak:2025x}, the main advance of this paper is to recover the required
pointwise residual control from computable empirical $L^2$ losses under norm constraints, while
quantifying the discretization error for kernels and the sampling error for neural networks. This
yields a unified conditional analysis in which the resulting solution-error bounds are expressed in
terms of the fill distance for kernels and the network and sample sizes for neural networks.

The difficulty in carrying out this program comes from a mismatch of error notions. Kernel
collocation and PINN/DGM minimize an empirical squared PDE residual and terminal mismatch (defined
precisely in Sections~\ref{sec:3}--\ref{sec:4}), whereas viscosity-solution error estimates require
pointwise control of these quantities. We bridge this gap by constraining the candidate in a norm that supplies
locally uniform $C^3$-control through Sobolev embedding, and by passing from the $L^2$-loss to the
sup-norm of the residual through a Gagliardo--Nirenberg-type interpolation between $L^2$ and
H\"older spaces; finally, the stochastic control representation of $v$ yields a controlled-diffusion tail estimate
that converts a residual bound on a truncation cylinder into convergence on compact subsets of
$[0,T]\times\mathbb{R}^d$.

Our contributions are as follows.
\begin{enumerate}
\item[(i)] \emph{Empirical residual-to-error mechanism} (Sections~\ref{sec:2}--\ref{sec:4}).
Norm constraints give uniform regularity of the candidates, after which interpolation converts
population $L^2$ residuals into the pointwise residuals required by the controlled-diffusion
truncation estimate. The population residuals are bounded by empirical losses through a
fill-distance quadrature estimate for kernels and a uniform generalization estimate for neural
networks.
\item[(ii)] \emph{Kernel realization} (Section~\ref{sec:3}). The ansatz is a combination
$\sum_j\theta_j\Phi(\cdot-y^j)$ of translates of a Wendland function over a uniform space-time grid,
under the native space constraint $\theta^{\mathsf T}K\theta\le\lambda^2$. Under the additional
classical and native space regularity assumptions, Theorem~\ref{thm:ker.qual} gives uniform
convergence on compact sets. Under \textnormal{(A1)} and
\textnormal{(A2)}, Theorem~\ref{thm:ker.HJB} gives an a posteriori quantitative error bound for the
unique bounded continuous viscosity solution, governed by the empirical loss and fill distance.
\item[(iii)] \emph{Neural-network realization} (Section~\ref{sec:4}). The ansatz is a feedforward
network with a bounded smooth activation, such as $\tanh$, constrained in an $H^s$-norm with
$s>3+(d+1)/2$. Theorems~\ref{thm:3.1} and~\ref{thm:3.2} are the qualitative and quantitative
counterparts, the rate being governed by the approximation power of the network class (explicit for
$\tanh$ by \cite{drm:2021}) and by a Rademacher bound obtained from a covering argument in the
network parameters.
\end{enumerate}

The results above require different levels of regularity. The basic residual-to-error estimate and
the kernel a posteriori bound require only the unique
bounded continuous viscosity solution supplied by \textnormal{(A1)} and \textnormal{(A2)}; in particular,
they do not require uniform nondegeneracy. To derive convergence from approximation properties of
the kernel and network classes, we assume that the value function is classical and has the stated
native space or Sobolev regularity and decay. Classical regularity follows from Krylov's theory under
uniform nondegeneracy and sufficiently smooth data \cite{kry:1987,fle-son:2006}, but the additional
function-space regularity is imposed separately. Fully nonlinear equations without a stochastic
control representation are not treated.

Section~\ref{sec:2} fixes the setting and proves the two estimates shared by both realizations,
Sections~\ref{sec:3} and~\ref{sec:4} treat the two realizations, and Section~\ref{sec:5} reports
numerical experiments in $d=1,2$ that illustrate the methods.

\smallskip
\noindent\emph{Notation.}
We use the Euclidean (respectively Frobenius) norm $|\cdot|$ for vectors
(respectively matrices), and set $\mathbb N_0=\{0,1,2,\ldots\}$. For
$m\in\mathbb N$, a multi-index $\alpha=(\alpha_1,\ldots,\alpha_m)\in\mathbb N_0^m$,
and $y=(y_1,\ldots,y_m)\in\mathbb R^m$, we write
\[
 D^\alpha z
 =\partial_{y_1}^{\alpha_1}\cdots\partial_{y_m}^{\alpha_m}z,
 \qquad
 |\alpha|_1=\alpha_1+\cdots+\alpha_m.
\]
For an open set $\mathcal U\subset\mathbb R^m$ and $k\in\mathbb N_0$,
$C^k(\overline{\mathcal U})$ denotes the functions whose derivatives up to
order $k$ extend continuously to $\overline{\mathcal U}$, and
\[
 \|z\|_{C^{k}(\overline{\mathcal U})}
 :=\sum_{|\alpha|_1\le k}
   \sup_{y\in\overline{\mathcal U}}|D^\alpha z(y)|.
\]
We write $C_b^k(\overline{\mathcal U})$ when this norm is finite and use the
standard Sobolev spaces $H^s(\mathcal U)$. For $\gamma\in(0,1]$, we set
\[
 [z]_{C^{0,\gamma}(\overline{\mathcal U})}
 :=\sup_{\substack{y,y'\in\overline{\mathcal U}\\y\ne y'}}
 \frac{|z(y)-z(y')|}{|y-y'|^\gamma}.
\]
For functions of $(t,x)$, $C^{\nu,\kappa}$ denotes continuous differentiability
up to order $\nu$ in $t$ and up to order $\kappa$ in $x$.

\section{Setting and two basic estimates}\label{sec:2}

\subsection{Problem setting and truncation domains}\label{sec:2.1}

Throughout the paper, for $t\in[0,T]$, $x,p\in\mathbb{R}^d$, $X\in\mathbb{S}^d$ (the symmetric $d\times d$
matrices) and $a\in A$,
\begin{equation}
\label{eq:2.HJB}
 H^a(t,x,p,X)=\tfrac{1}{2}\mathrm{tr}(\sigma\sigma^{\mathsf{T}}(t,x,a)X)+b(t,x,a)\cdot p+f(t,x,a),
 \qquad H:=\inf_{a\in A}H^a,
\end{equation}
so that \eqref{eq:1.1} reads $\partial_tv+H(t,x,D_xv,D^2_{xx}v)=0$. We assume the following.
\begin{enumerate}
\item[\textnormal{(A1)}] The control set $A\subset\mathbb{R}^{d_1}$ is nonempty and compact, and
$b:[0,T]\times\mathbb{R}^d\times A\to\mathbb{R}^d$,
$\sigma:[0,T]\times\mathbb{R}^d\times A\to\mathbb{R}^{d\times d_2}$ and $f:[0,T]\times\mathbb{R}^d\times A\to\mathbb{R}$
are continuous. There is a constant $C_1>0$ such that for $\psi=b,\sigma,f$, $t,t'\in[0,T]$,
$x,x'\in\mathbb{R}^{d}$ and $a\in A$,
\(
 |\psi(t,x,a)|\le C_1
\) and
\(
 |\psi(t,x,a)-\psi(t',x',a)|\le C_1(|t-t'|+|x-x'|).
\)
\item[\textnormal{(A2)}] The function $g$ is Lipschitz continuous and bounded on $\mathbb{R}^d$.
\end{enumerate}

Fix $\bar x\in\mathbb{R}^d$ and a sequence of radii $(R_n)_{n\ge1}$ such that
$R_n>0$ and $R_n\to\infty$. The truncation cylinders, the
residual of a function $\varphi\in C^{1,2}$ are
\begin{equation}
\label{eq:Qn}
 Q_n=(0,T)\times B_{R_n},\qquad
 B_{R_n}:=\{x\in\mathbb{R}^d: \max_{1\le i\le d}|x_i-\bar{x}_i|<R_n\},
\end{equation}
\begin{equation}
\label{eq:residual}
 r(\varphi)(t,x)=\partial_t\varphi(t,x)+H(t,x,D_x\varphi(t,x),D^2_{xx}\varphi(t,x)),
\end{equation}
Let $\mathrm{Leb}_m$ denote the $m$-dimensional Lebesgue measure. The sampling measures used by
both realizations are
\[
 \mu_n:=\frac{\left.\mathrm{Leb}_{d+1}\right|_{Q_n}}
                  {\mathrm{Leb}_{d+1}(Q_n)},
 \qquad
 \mu_n^{\rm T}:=\frac{\left.\mathrm{Leb}_{d}\right|_{B_{R_n}}}
                         {\mathrm{Leb}_{d}(B_{R_n})}.
\]
For $R_n\ge1$, the Sobolev embeddings used below have constants independent of $n$: one can
extend functions from these boxes by reflection in each coordinate, with a fixed collar width
depending only on $T$, $d$ and the Sobolev order, and then apply the embedding on
$\mathbb{R}^{d+1}$ (or $\mathbb{R}^d$ for the terminal slice).

\begin{lem}[Hamiltonian and residual]\label{lem:2.F}
Assume \textnormal{(A1)}. There is a constant $C$, depending only on $C_1$ and $d$, such that the
following hold for all $t,t'\in[0,T]$, $x,x'\in\mathbb{R}^d$, $p,p'\in\mathbb{R}^d$ and $X,X'\in\mathbb{S}^d$.
\begin{enumerate}
\item[\textnormal{(i)}] $|H(t,x,p,X)|\le C(1+|p|+|X|)$.
\item[\textnormal{(ii)}] $|H(t,x,p,X)-H(t',x',p',X')|\le C(1+|p|+|X|)(|t-t'|+|x-x'|)+C(|p-p'|+|X-X'|)$.
\item[\textnormal{(iii)}] If $\mathcal{O}\subset\mathbb{R}^{d+1}$ is bounded, open and convex, $M\ge0$, and
$\varphi\in C^3(\overline{\mathcal{O}})$ satisfies $\|\varphi\|_{C^3(\overline{\mathcal{O}})}\le M$, then
$[r(\varphi)]_{C^{0,1}(\overline{\mathcal{O}})}\le C(1+M)$.
\end{enumerate}
\end{lem}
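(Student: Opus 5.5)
The plan is to prove everything first for each fixed $H^a$ with constants uniform in $a\in A$, and then pass to the infimum. The only tool needed there is the elementary inequality $|\inf_a F_a-\inf_a G_a|\le\sup_a|F_a-G_a|$. It holds whenever the families are bounded below, which is the case here because $A$ is compact and $H^a$ is continuous in $a$.

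\emph{Part (i).} By (A1), $|\sigma\sigma^{\mathsf T}|\le|\sigma|^2\le C_1^2$ in Frobenius norm. Cauchy--Schwarz for the Frobenius inner product then gives $|\mathrm{tr}(\sigma\sigma^{\mathsf T}X)|\le C_1^2|X|$. Together with $|b\cdot p|\le C_1|p|$ and $|f|\le C_1$, this yields $|H^a|\le C(1+|p|+|X|)$ uniformly in $a$. Since $H$ is an infimum of functions satisfying this two-sided bound, it inherits the same bound.

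\emph{Part (ii).} For fixed $a$ we split the difference as
\[
H^a(t,x,p,X)-H^a(t',x',p',X') = \bigl[H^a(t,x,p,X)-H^a(t',x',p,X)\bigr]+\bigl[H^a(t',x',p,X)-H^a(t',x',p',X')\bigr].
\]
The second bracket is linear in $(p-p',X-X')$ with coefficients bounded by $C_1$ and $C_1^2/2$. This gives $C(|p-p'|+|X-X'|)$.

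For the first bracket we use the factorisation $\sigma\sigma^{\mathsf T}-\sigma'\sigma'^{\mathsf T}=(\sigma-\sigma')\sigma^{\mathsf T}+\sigma'(\sigma-\sigma')^{\mathsf T}$. Its norm is at most $2C_1\cdot C_1(|t-t'|+|x-x'|)$. The Lipschitz bounds on $b$ and $f$ are used directly, so the first bracket is bounded by $C(|X|+|p|+1)(|t-t'|+|x-x'|)$. Taking the supremum over $a$ and using the inf-inequality gives (ii).

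\emph{Part (iii).} Write $y=(t,x)$ and $\Psi(y)=(\partial_t\varphi,D_x\varphi,D^2_{xx}\varphi)(y)$. The $C^3$ bound gives $|D_x\varphi|,|D^2_{xx}\varphi|\le C_dM$. Since the components of $\Psi$ involve derivatives of order at most $2$, their gradients are derivatives of order at most $3$. Convexity of $\mathcal O$ then lets us apply the mean value theorem along segments, which gives $|\Psi(y)-\Psi(y')|\le C_dM|y-y'|$ on $\overline{\mathcal O}$.

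Next we write $r(\varphi)(y)-r(\varphi)(y')$ as the difference of the $\partial_t\varphi$ terms plus $H(y,D_x\varphi(y),D^2\varphi(y))-H(y',D_x\varphi(y'),D^2\varphi(y'))$. We bound the second difference by (ii) with $p=D_x\varphi(y)$ and $X=D^2_{xx}\varphi(y)$, so that $1+|p|+|X|\le C(1+M)$. This gives
\[
|r(\varphi)(y)-r(\varphi)(y')|\le C(1+M)|y-y'|+CM|y-y'|+CM|y-y'|,
\]
and hence the claimed bound $C(1+M)$ on the Lipschitz seminorm.

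The only point needing care is (iii). There, the multiplicative factor in (ii) has to be evaluated at bounded arguments rather than being quadratic in $M$. The estimate remains linear in $M$ because the factor $1+|p|+|X|$ multiplies only $|y-y'|$, while the derivative differences enter linearly. Convexity is used exactly to pass from the pointwise $C^3$ bound to Lipschitz bounds on the derivatives. The constants depend only on $C_1$ and $d$, through the Frobenius/entrywise norm comparison for the derivative arrays.
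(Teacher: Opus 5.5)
Your proposal is correct and follows essentially the same route as the paper: uniform-in-$a$ bounds for $H^a$ from $|\sigma\sigma^{\mathsf T}|\le C_1^2$, the Lipschitz bound $2C_1^2(|t-t'|+|x-x'|)$ for $\sigma\sigma^{\mathsf T}$, and $|\mathrm{tr}(SX)|\le|S||X|$; then passage to the infimum via $|\inf_a F_a-\inf_a G_a|\le\sup_a|F_a-G_a|$; and, for (iii), the mean value theorem on the convex set combined with (ii). Your two-bracket split and the explicit factorisation of $\sigma\sigma^{\mathsf T}(t,x,a)-\sigma\sigma^{\mathsf T}(t',x',a)$ only spell out details that the paper leaves implicit.
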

\begin{proof}
For a fixed $a\in A$, the bounds in \textnormal{(A1)} give
\[
 |\sigma\sigma^{\mathsf T}(t,x,a)|\le C_1^2
\]
and
\[
 |\sigma\sigma^{\mathsf T}(t,x,a)-\sigma\sigma^{\mathsf T}(t',x',a)|
 \le 2C_1^2(|t-t'|+|x-x'|).
\]
Together with $|\mathrm{tr}(SX)|\le|S||X|$, these estimates give (i) and (ii) for $H^a$, with a
constant independent of $a$. The same bounds hold for $H=\inf_aH^a$ because
$|\inf_a H^a-\inf_a G^a|\le\sup_a|H^a-G^a|$.

It remains to prove (iii). Let $y,y'\in\overline{\mathcal O}$. Since $\mathcal O$ is convex, the
line segment from $y$ to $y'$ lies in $\overline{\mathcal O}$. The mean value theorem and the bound
$\|\varphi\|_{C^3(\overline{\mathcal O})}\le M$ give
\[
 |\partial_t\varphi(y)-\partial_t\varphi(y')|
 +|D_x\varphi(y)-D_x\varphi(y')|
 +|D^2_{xx}\varphi(y)-D^2_{xx}\varphi(y')|
 \le CM|y-y'|.
\]
Also, $|D_x\varphi|+|D^2_{xx}\varphi|\le CM$ on $\overline{\mathcal O}$. Applying (ii) in the
definition of $r(\varphi)$ now gives
$|r(\varphi)(y)-r(\varphi)(y')|\le C(1+M)|y-y'|$.
\end{proof}

Under \textnormal{(A1)} and \textnormal{(A2)}, the equation \eqref{eq:1.1} has a unique bounded continuous
viscosity solution; see Pham~\cite[Chapter~4]{pha:2009} and Fleming and
Soner~\cite{fle-son:2006}. Hence every bounded classical solution coincides with this solution. We
will also use the corresponding uniqueness result when a bounded Lipschitz function of $(t,x)$ is
added to $f$.

Uniform nondegeneracy and sufficiently smooth coefficients yield a classical solution by Krylov's
regularity theory \cite{kry:1987,fle-son:2006}. The Sobolev regularity imposed in
Sections~\ref{sec:3} and~\ref{sec:4} is a separate assumption, because it also requires smoothness and
decay at infinity. The test problem in Section~\ref{sec:5.1} satisfies this assumption.

\subsection{From $L^2$ to sup-norm bounds}\label{sec:2.2}

We first record an elementary interpolation estimate that converts an $L^2$ bound into a
sup-norm bound under H\"older control.

\begin{lem}[$L^2$--H\"older interpolation on boxes]\label{lem:2.1}
Let $\mathcal{O}=\prod_{i=1}^m(a_i,b_i)\subset\mathbb{R}^m$ be a bounded open box, put
$r_0:=\min\{1,\min_i(b_i-a_i)\}$, and let $\alpha\in(0,1]$. There is a constant $C_0=C_0(m,\alpha)$ such that,
for every $f\in C^{0,\alpha}(\overline{\mathcal{O}})$,
\begin{equation}
\label{eq:2.1.GN}
 \|f\|_{L^{\infty}(\mathcal{O})}
   \le C_0\,r_0^{-m/2}\Big(\,\|f\|_{L^2(\mathcal{O})}^{2\alpha/(2\alpha+m)}\,[f]_{C^{0,\alpha}(\overline{\mathcal{O}})}^{m/(2\alpha+m)}
            \;+\;\|f\|_{L^2(\mathcal{O})}\,\Big).
\end{equation}
In particular, for $R_n\ge1/2$ the constant in \eqref{eq:2.1.GN} is bounded independently of $n$ for
the cylinders $\mathcal{O}=Q_n\subset\mathbb{R}^{d+1}$ and for the cubes $\mathcal{O}=B_{R_n}\subset\mathbb{R}^d$,
for which $r_0=\min\{1,T\}$ and $r_0=1$, respectively.
\end{lem}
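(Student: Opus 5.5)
Fix the point $y_0\in\overline{\mathcal O}$ where $|f|$ is (nearly) maximal; write $M:=|f(y_0)|$, $L:=[f]_{C^{0,\alpha}(\overline{\mathcal O})}$ and $\rho\in(0,r_0]$ a radius to be chosen. The key geometric fact is that, since $\mathcal O$ is a box with all side lengths at least $r_0\ge\rho$, there is a sub-cube $S_\rho\subset\mathcal O$ of side $\rho$ whose closure contains $y_0$. To build it, take in each coordinate an interval of length $\rho$ inside $(a_i,b_i)$ with $y_{0,i}$ an endpoint or interior point, shifting inward if $y_0$ is near a face. Every $y\in S_\rho$ then satisfies $|y-y_0|\le\sqrt m\,\rho$. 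Hence on $S_\rho$
\[
 |f(y)|\ge M-L m^{\alpha/2}\rho^\alpha .
\]

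\textbf{Main case.} Suppose $L m^{\alpha/2}\rho^\alpha\le M/2$. Integrating over $S_\rho$ gives $\|f\|_{L^2(\mathcal O)}^2\ge (M/2)^2\rho^m$, that is,
\[
 M\le 2\rho^{-m/2}\|f\|_{L^2(\mathcal O)}.
\]
Choose $\rho_*$ so that $L m^{\alpha/2}\rho_*^\alpha=M/2$, i.e.\ $\rho_*=(M/(2m^{\alpha/2}L))^{1/\alpha}$. Two subcases arise.

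\emph{(a) $\rho_*\le r_0$.} Take $\rho=\rho_*$. Then $M\le 2\rho_*^{-m/2}\|f\|_{L^2}$. Substituting $\rho_*\sim (M/L)^{1/\alpha}$ gives $M^{1+m/(2\alpha)}\lesssim L^{m/(2\alpha)}\|f\|_{L^2}$, and therefore
\[
 M\lesssim \|f\|_{L^2}^{2\alpha/(2\alpha+m)}L^{m/(2\alpha+m)} .
\]
The constant depends only on $m$ and $\alpha$. Since $r_0\le1$, we have $r_0^{-m/2}\ge1$, so the factor $r_0^{-m/2}$ can be inserted harmlessly.

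\emph{(b) $\rho_*>r_0$.} Take $\rho=r_0$. The condition $L m^{\alpha/2}r_0^\alpha\le M/2$ still holds, so $M\le 2r_0^{-m/2}\|f\|_{L^2}$, which is the second term in \eqref{eq:2.1.GN}.

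Combining (a) and (b) yields \eqref{eq:2.1.GN} with $C_0=C_0(m,\alpha)$. If $L=0$, then $f$ is constant and case (b) applies directly. If the sup is not attained because we work on $\overline{\mathcal O}$, note that $f$ is continuous on the compact closure, so it is attained.

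\textbf{Consequence for the cylinders and cubes.} For $Q_n$ the side lengths are $T$ and $2R_n\ge1$, so $r_0=\min\{1,T\}$. For $B_{R_n}$ the sides are $2R_n\ge1$, so $r_0=1$. In both cases $r_0$ is independent of $n$, and so is the constant.

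\textbf{Main obstacle.} The only delicate point is the sub-cube construction near the boundary and corners. Its purpose is that the measure lower bound $\rho^m$ holds uniformly without any cone-condition constant degenerating. This is exactly why boxes with side at least $r_0$ and the restriction $\rho\le r_0$ are used. I would verify it coordinatewise: choose $[c_i,c_i+\rho]\subset[a_i,b_i]$ containing $y_{0,i}$, which is possible since $b_i-a_i\ge\rho$.
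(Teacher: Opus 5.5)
Your proof is correct and follows essentially the same argument as the paper: take a maximizer of $|f|$ on $\overline{\mathcal O}$, find a region inside the box of measure comparable to $\rho^m$ on which $|f|\ge M/2$, and choose $\rho=\rho_*$ or $\rho=r_0$ depending on whether $\rho_*\le r_0$. The only difference is cosmetic: you place a full subcube of side $\rho$ with $y_0$ in its closure, which costs a factor $m^{\alpha/2}$ in the H\"older term, whereas the paper bounds $|B(x_0,r)\cap\mathcal O|\ge(2\sqrt m)^{-m}r^m$ using a corner cube of side $r/(2\sqrt m)$.
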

\begin{proof}
Step (i). Let $c_0:=(2\sqrt m)^{-m}$. We claim that
\begin{equation}
\label{eq:2.1.cone}
 |B(x_0,r)\cap\mathcal{O}|\ge c_0\,r^m,\qquad x_0\in\overline{\mathcal{O}},\;\;r\in(0,r_0].
\end{equation}
For each $i$, $(x_{0,i}-a_i)+(b_i-x_{0,i})=b_i-a_i\ge r$, so $b_i-x_{0,i}\ge r/2$ or $x_{0,i}-a_i\ge r/2$;
let $\omega_i:=+1$ in the first case and $\omega_i:=-1$ otherwise. With $s:=r/(2\sqrt m)\le r/2$, the
cube $\{x:\omega_i(x_i-x_{0,i})\in(0,s),\ i=1,\ldots,m\}$ is contained in $\mathcal{O}$ and, as its points
satisfy $|x-x_0|\le s\sqrt m<r$, in $B(x_0,r)$. The volume of this cube is $s^m=c_0r^m$.

Step (ii). 
Set $M:=\|f\|_{L^{\infty}(\mathcal{O})}$, $L:=[f]_{C^{0,\alpha}(\overline{\mathcal{O}})}$ and
$E:=\|f\|_{L^2(\mathcal{O})}$, and assume $M>0$. Choose $x_0\in\overline{\mathcal{O}}$ with $|f(x_0)|=M$.
For $r\in(0,r_0]$ with $Lr^{\alpha}\le M/2$ we have $|f|\ge M/2$ on $B(x_0,r)\cap\mathcal{O}$, hence, by
\eqref{eq:2.1.cone},
\begin{equation}
\label{eq:2.1.lb}
 E^2\ge\tfrac14 M^2c_0\,r^m .
\end{equation}
If $r_*:=(M/(2L))^{1/\alpha}\le r_0$ (with $L>0$), taking $r=r_*$ in \eqref{eq:2.1.lb} and solving for $M$ gives
$M\le(4\cdot2^{m/\alpha}/c_0)^{\alpha/(2\alpha+m)}L^{m/(2\alpha+m)}E^{2\alpha/(2\alpha+m)}$. Otherwise
$Lr_0^{\alpha}\le M/2$, and $r=r_0$ in \eqref{eq:2.1.lb} gives $M\le 2c_0^{-1/2}r_0^{-m/2}E$. Since
$r_0\le1$, \eqref{eq:2.1.GN} follows in both cases.
\end{proof}

\begin{rem}\label{rem:2.1}
We use \eqref{eq:2.1.GN} with $\alpha=1$, with $m=d+1$ for the residual on $Q_n$ and $m=d$ for the
terminal mismatch on $B_{R_n}$; this produces the exponents $1/(d+3)$ and $1/(d+2)$ in
\eqref{eq:3.kernel.HJB} and \eqref{eq:3.5.eta}. A higher-order H\"older or Sobolev norm of the residual
is not available in general, because $H$ is only Lipschitz in $(p,X)$; the Lipschitz bound of
Lemma~\ref{lem:2.F}\,(iii) is the natural input.
\end{rem}

\subsection{From residual bounds to approximation error}\label{sec:2.3}

The rate analyses of Sections~\ref{sec:3.5} and~\ref{sec:4.5} are carried out on the bounded
cylinders \eqref{eq:Qn}, whereas \eqref{eq:1.1} is posed on all of $\mathbb{R}^d$. The following
estimate converts an interior residual bound into a global sup-norm error, at the price of an
exponentially small tail; it is the only place where the stochastic representation of the value
function is used. It is adapted from the corresponding result of \cite{nak:2025x} and proved here so that the present
paper is self-contained.

Throughout this subsection \textnormal{(A1)} and \textnormal{(A2)} are in force.
Let $\lambda_n\ge1$, and let $v_n\in C^3_b([0,T]\times\mathbb{R}^d)$ satisfy
$\|v_n\|_{C^{1,2}([0,T]\times\mathbb{R}^d)}\le\lambda_n$. Put
\begin{equation}
\label{eq:tail.eta}
 \eta_n^{\rm res}:=\sup_{(t,x)\in[0,T]\times B_{R_n}}|r(v_n)(t,x)|,
 \qquad
 \eta_n^{\rm T}:=\sup_{x\in B_{R_n}}|v_n(T,x)-g(x)|,
 \qquad
 \eta_n:=\eta_n^{\rm res}\vee\eta_n^{\rm T}.
\end{equation}

The following theorem bounds the approximation error in terms of the residual and terminal errors.
\begin{thm}\label{thm:tail}
Suppose that \textnormal{(A1)} and \textnormal{(A2)} hold. For each $n\ge1$, let
$v_n\in C^3_b([0,T]\times\mathbb{R}^d)$ satisfy
\[
 \|v_n\|_{C^{1,2}([0,T]\times\mathbb{R}^d)}\le\lambda_n.
\]
Then for every compact
$Q\subset[0,T]\times\mathbb{R}^d$ there are $C>0$ and $n_0\in\mathbb{N}$ such that
\begin{equation}
\label{eq:tail.bound}
\begin{aligned}
 \sup_{(t,x)\in Q}|v(t,x)-v_n(t,x)|
 &\le T\eta_n^{\rm res}+\eta_n^{\rm T}
   +C\lambda_n\exp\Bigl(-\frac{R_n^2}{8dC_1^2T}\Bigr) \\
 &\le(1+T)\eta_n
   +C\lambda_n\exp\Bigl(-\frac{R_n^2}{8dC_1^2T}\Bigr),\qquad n\ge n_0,
\end{aligned}
\end{equation}
where $v$ is the unique bounded continuous viscosity solution of \eqref{eq:1.1}.
\end{thm}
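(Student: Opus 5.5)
We compare $v$ with $v_n$ through the stochastic control representation and a stopping time at the exit from the truncation cube, following \cite{nak:2025x}. For $(t,x)\in Q$ and an admissible control $\alpha=(\alpha_s)$, let $X^{t,x,\alpha}$ solve $dX_s=b(s,X_s,\alpha_s)\,ds+\sigma(s,X_s,\alpha_s)\,dW_s$, $X_t=x$. Then
\[
v(t,x)=\inf_\alpha\mathbb{E}\Bigl[\int_t^Tf(s,X_s,\alpha_s)\,ds+g(X_T)\Bigr],
\]
which is the unique bounded continuous viscosity solution under (A1)--(A2). Let $\tau:=\inf\{s\ge t:X_s\notin B_{R_n}\}\wedge T$.

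\textbf{Step 1: Itô's formula for $v_n$.} Since $v_n\in C^3_b$, Itô's formula between $t$ and $\tau$ gives
\[
v_n(t,x)=\mathbb{E}\Bigl[v_n(\tau,X_\tau)-\int_t^\tau\bigl(\partial_tv_n+H^{\alpha_s}(s,X_s,D_xv_n,D^2_{xx}v_n)\bigr)ds\Bigr].
\]
The stochastic integral has zero mean because $D_xv_n$ and $\sigma$ are bounded.

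\textbf{Step 2: the inequality $v_n-v\le T\eta_n^{\rm res}+\eta_n^{\rm T}+\text{tail}$.} Because $H\le H^a$, for $s<\tau$ the integrand is at least $r(v_n)\ge-\eta_n^{\rm res}$. Write $v_n(\tau,X_\tau)=v_n(T,X_T)+(v_n(\tau,X_\tau)-v_n(T,X_T))$, where the correction vanishes on $\{\tau=T,\ X_T\in B_{R_n}\}$, up to the boundary, handled by closure. On that event, $v_n(T,X_T)\le g(X_T)+\eta_n^{\rm T}$. We also use the dynamic programming principle for $v$ to add back the running cost on $[\tau,T]$. On the complementary event $\{\tau<T\}$, all terms are bounded by a multiple of $\lambda_n+\|g\|_\infty+TC_1\le C\lambda_n$, since $\lambda_n\ge1$. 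This yields, for every $\alpha$,
\[
v_n(t,x)\le\mathbb{E}\Bigl[\int_t^Tf\,ds+g(X_T)\Bigr]+T\eta_n^{\rm res}+\eta_n^{\rm T}+C\lambda_n\,\mathbb{P}\bigl(\textstyle\sup_{s\in[t,T]}|X_s-x|_\infty\ge R_n-|x-\bar x|_\infty\bigr).
\]
Taking the infimum over $\alpha$ gives the upper bound.

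\textbf{Step 3: the reverse inequality.} For the reverse inequality, the infimum is not attained in general, so we cannot simply pick an optimal control. Instead we use a measurable-selection / $\varepsilon$-optimal feedback argument: choose $\alpha_s$ nearly attaining $\inf_aH^a$ at $(s,X_s,D_xv_n,D^2_{xx}v_n)$, which is possible since $A$ is compact and $H^a$ is continuous in $a$. This gives the integrand $\le r(v_n)+\varepsilon\le\eta_n^{\rm res}+\varepsilon$. Compare this with $v(t,x)\le\mathbb{E}[\int f+g]$ for that control, and let $\varepsilon\to0$.

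An alternative that avoids selection issues is to note that $w:=v_n$ is a classical solution of the HJB equation with $f$ replaced by $f-r(v_n)$ inside the cube. One then compares via the uniqueness remark, with $f$ perturbed by a bounded Lipschitz function obtained by truncating $r(v_n)$, Lipschitz by Lemma~\ref{lem:2.F}(iii). I would try the feedback argument first.

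\textbf{Step 4: tail estimate (main obstacle).} The main obstacle is to get the exact constant $R_n^2/(8dC_1^2T)$ uniformly over controls. Choose $n_0$ so that $R_n\ge2\sup_Q|x-\bar x|_\infty$ for $n\ge n_0$; then the deviation needed to exit is at least $R_n/2$ in some coordinate. Since $|b|\le C_1$, the drift contributes at most $C_1T\le R_n/4$ for large $n$ (absorbed into $n_0$). Each coordinate martingale $M^i$ has quadratic variation at most $C_1^2T$. The exponential martingale (Bernstein) inequality then gives $\mathbb{P}(\sup|M^i|\ge R_n/4)\le2\exp(-R_n^2/(32C_1^2T))$. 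A union bound over $d$ coordinates costs a factor $2d$.

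Matching $8d$ in the exponent requires splitting the deviation more carefully, e.g. bounding $|\sigma_i|^2$ by $C_1^2$ and apportioning the vector norm across coordinates, which is where the $d$ enters. Since any Gaussian-type exponent suffices for the later use, I would first secure a bound of this form and then tune the constants (threshold $R_n/2$ minus drift, with $R_n$ large) to reach $\exp(-R_n^2/(8dC_1^2T))$. The resulting constant $C$ depends only on $Q$, $C_1$, $T$, $d$ and $\|g\|_\infty$.
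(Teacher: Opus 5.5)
\textbf{There is a genuine gap in Step~3, the lower bound $v-v_n\le\ldots$.}

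Your Step~2 is sound. It\^o's formula up to the exit time $\tau$, together with $H\le H^a$, gives $v_n\le v+T\eta_n^{\rm res}+\eta_n^{\rm T}+C\lambda_n\sup_\alpha\mathbb{P}(\tau<T)$ directly. Two small slips: the identity in Step~1 omits $\mathbb{E}\int_t^\tau f(s,X_s,\alpha_s)\,ds$, because $H^a$ contains $f$; and the dynamic programming principle in Step~2 is unnecessary, since you compare with the cost of the same control and then take the infimum.

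Setting $\alpha_s=a^*(s,X_s)$, with $a^*$ a Borel $\varepsilon$-minimizer of $a\mapsto H^a(s,y,D_xv_n,D^2_{xx}v_n)$, is circular. $X$ must then solve the closed-loop equation $dX_s=b(s,X_s,a^*(s,X_s))\,ds+\sigma(s,X_s,a^*(s,X_s))\,dW_s$. Its coefficients need not be continuous in $x$, and the paper allows degenerate $\sigma$ (even $\sigma\equiv0$). So this equation need have neither a strong nor a weak solution, and the control you want to insert into $v(t,x)\le\mathbb{E}[\int_t^Tf\,ds+g(X_T)]$ may not exist in $\mathcal{A}$. Compactness of $A$ gives you the selector, not the controlled process.

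A repair that keeps your route is to use step controls. On a grid $t_k$ with mesh $\Delta$, put $\alpha_s:=a^*(t_k,X_{t_k})$ on $[t_k,t_{k+1})$. This choice is $\mathcal{F}_{t_k}$-measurable, and the coefficients stay Lipschitz on each subinterval. Since $v_n\in C^3_b$, the map $(s,y)\mapsto H^a(s,y,D_xv_n(s,y),D^2_{xx}v_n(s,y))$ is Lipschitz uniformly in $a$, with some constant $L_n$. So freezing costs at most $2L_n(|s-t_k|+|X_s-X_{t_k}|)$ in the integrand. Its expected time integral is $O(L_n\Delta^{1/2})$, which vanishes as $\Delta\to0$ for fixed $n$.

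The paper avoids selection entirely; this is your ``alternative'' done correctly. Because $v_n\in C^3_b$, the residual $r_n=r(v_n)$ is bounded and Lipschitz on all of $[0,T]\times\mathbb{R}^d$ by Lemma~\ref{lem:2.F}\,(i)--(ii). Moreover $v_n$ solves $\partial_tv_n+\inf_a\{H^a-r_n\}=0$ everywhere. Uniqueness then identifies $v_n$ with the value function with running cost $f-r_n$ and terminal cost $v_n(T,\cdot)$. Subtracting the two representations and splitting on the exit event gives both inequalities at once. No truncation is needed. Truncating $r(v_n)$, or working only inside the cube, would destroy exactly the property the uniqueness statement requires: that $v_n$ solves the perturbed Cauchy problem on all of $\mathbb{R}^d$.

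Finally, Step~4 is not a real obstacle. Your coordinatewise treatment of the $\ell^\infty$-cube is actually sharper than the paper's detour through the Euclidean norm. Choose $n_0$ with $\sup_Q|x-\bar x|_\infty+C_1T\le R_n/2$. Then exit forces $\sup_s|M^i_s|\ge R_n/2$ for some $i$, and the exponential martingale bound gives $2d\exp(-R_n^2/(8C_1^2T))\le2d\exp(-R_n^2/(8dC_1^2T))$. The constant was lost only through your split of the threshold into $R_n/2$, $R_n/4$ and $R_n/4$.
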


\begin{proof}
The value function in stochastic control is the unique bounded continuous viscosity solution of
\eqref{eq:1.1} under \textnormal{(A1)} and \textnormal{(A2)}; see, for example,
Pham~\cite[Chapter~4]{pha:2009}. It therefore equals $v$, and
\begin{equation}
\label{eq:tail.rep}
 v(t,x)=\inf_{\alpha\in\mathcal{A}}\mathbb{E}\Bigl[\int_t^Tf(s,X_s^{t,x,\alpha},\alpha_s)\,ds
   +g(X_T^{t,x,\alpha})\Bigr],
\end{equation}
Here $X^{t,x,\alpha}$ solves
$dX_s=b(s,X_s,\alpha_s)\,ds+\sigma(s,X_s,\alpha_s)\,dW_s$, $X_t=x$, on a filtered probability space
carrying a $d_2$-dimensional Brownian motion $W$, and $\mathcal{A}$ is the set of $A$-valued
progressively measurable controls.

Since $v_n\in C^3_b([0,T]\times\mathbb{R}^d)$, the derivatives $\partial_tv_n$, $D_xv_n$, $D^2_{xx}v_n$ are bounded and
Lipschitz, so Lemma~\ref{lem:2.F}\,(i)--(ii) shows that $r_n:=r(v_n)$ is bounded and Lipschitz on $[0,T]\times\mathbb{R}^d$, and
\[
 \partial_tv_n+\inf_{a\in A}\bigl\{H^a(t,x,D_xv_n,D^2_{xx}v_n)-r_n\bigr\}=0\quad\text{on }[0,T)\times\mathbb{R}^d.
\]
Since $v_n$ is a bounded
classical solution of this equation, the uniqueness statement of Section~\ref{sec:2.1}, applied with $f$ replaced by $f-r_n$ and $g$ by
$v_n(T,\cdot)$, which satisfy \textnormal{(A1)} and \textnormal{(A2)}, gives
\[
 v_n(t,x)=\inf_{\alpha\in\mathcal{A}}\mathbb{E}\Bigl[\int_t^T\bigl(f(s,X_s^{t,x,\alpha},\alpha_s)
   -r_n(s,X_s^{t,x,\alpha})\bigr)ds+v_n(T,X_T^{t,x,\alpha})\Bigr].
\]
Subtracting \eqref{eq:tail.rep} and using $|\inf_\alpha\Phi-\inf_\alpha\Psi|\le\sup_\alpha|\Phi-\Psi|$,
\[
 |v(t,x)-v_n(t,x)|\le\sup_{\alpha}\int_t^T\mathbb{E}\bigl|r_n(s,X_s^{t,x,\alpha})\bigr|ds
   +\sup_{\alpha}\mathbb{E}\bigl|g(X_T^{t,x,\alpha})-v_n(T,X_T^{t,x,\alpha})\bigr|.
\]
Since $B_{R_n}$ is defined with the $\ell^\infty$-norm, leaving it implies
$|X_s^{t,x,\alpha}-\bar x|\ge R_n$ in the Euclidean norm. Splitting each expectation according to whether the state lies in $B_{R_n}$ bounds the two
contributions from $B_{R_n}$ by $T\eta_n^{\rm res}$ and $\eta_n^{\rm T}$ respectively, by \eqref{eq:tail.eta}, while on the complement both integrands are bounded by $C(1+\|v_n\|_{C^{1,2}})\le C\lambda_n$, by
Lemma~\ref{lem:2.F}\,(i), \textnormal{(A2)} and $\lambda_n\ge1$. Hence
\begin{equation}
\label{eq:tail.split}
 |v(t,x)-v_n(t,x)|\le T\eta_n^{\rm res}+\eta_n^{\rm T}
  +C\lambda_n\sup_{\alpha\in\mathcal{A}}
   \mathbb{P}\Bigl(\sup_{t\le s\le T}|X_s^{t,x,\alpha}-\bar x|\ge R_n\Bigr).
\end{equation}
It remains to show that, for some $n_0$,
\begin{equation}
\label{eq:tail.tail}
 \sup_{(t,x)\in Q}\ \sup_{\alpha\in\mathcal{A}}
 \mathbb{P}\Bigl(\sup_{t\le s\le T}|X_s^{t,x,\alpha}-\bar x|\ge R_n\Bigr)
  \le 2d\exp\Bigl(-\frac{R_n^2}{8dC_1^2T}\Bigr),\qquad n\ge n_0 .
\end{equation}
Fix $t,x,\alpha$ and set $b_s=b(s,X_s^{t,x,\alpha},\alpha_s)$,
$\sigma_s=\sigma(s,X_s^{t,x,\alpha},\alpha_s)$. Then
\[
 \Bigl\{\sup_{t\le s\le T}|X_s^{t,x,\alpha}-\bar x|\ge R_n\Bigr\}
 \subset\Bigl\{|\bar x|+|x|+\int_t^T|b_s|\,ds\ge\tfrac{R_n}{2}\Bigr\}
 \cup\Bigl\{\sup_{t\le s\le T}\Bigl|\int_t^s\sigma_u\,dW_u\Bigr|\ge\tfrac{R_n}{2}\Bigr\}.
\]
Because $R_n\to\infty$, there is $n_0$ with $R_n>2(|\bar x|+\sup_{(s,y)\in Q}|y|+C_1T)$ for
$n\ge n_0$, so the first event is empty by \textnormal{(A1)}, and only the $\mathbb{R}^d$-valued
martingale $M_s:=\int_t^s\sigma_u\,dW_u$ contributes. If $|M_s|\ge R_n/2$, then
$|M^i_s|\ge\rho:=R_n/(2\sqrt d)$ for some coordinate $i$. Each $M^i$ is a continuous martingale with
$\langle M^i\rangle_T\le C_1^2T$, because $|\sigma|\le C_1$. Let $\tau:=\inf\{s\in[t,T]:M^i_s\ge\rho\}\wedge T$.
Optional sampling applied to the exponential martingale $\exp(\mu M^i-\tfrac{\mu^2}{2}\langle M^i\rangle)$,
$\mu>0$, gives
\[
 1=\mathbb{E}\Bigl[\exp\Bigl(\mu M^i_{\tau}-\tfrac{\mu^2}{2}\langle M^i\rangle_{\tau}\Bigr)\Bigr]
  \ge\mathbb{P}\Bigl(\sup_{t\le s\le T}M^i_s\ge\rho\Bigr)\exp\Bigl(\mu\rho-\tfrac{\mu^2}{2}C_1^2T\Bigr),
\]
and the choice $\mu=\rho/(C_1^2T)$ yields
$\mathbb{P}(\sup_{s}M^i_s\ge\rho)\le\exp(-\rho^2/(2C_1^2T))=\exp(-R_n^2/(8dC_1^2T))$. Applying the same
bound to $-M^i$ and summing over $i=1,\ldots,d$ gives \eqref{eq:tail.tail}. Combining with
\eqref{eq:tail.split} gives \eqref{eq:tail.bound}.
\end{proof}

\section{Kernel realization}\label{sec:3}

We now apply the tools of Section~\ref{sec:2} to kernel-based collocation
on a positive definite radial basis function $\Phi$ of Wendland type.
The candidate sequence $\{v_n\}$ is parameterized by a coefficient vector $\theta^{(n)}\in\mathbb{R}^N$
as a linear combination of kernel translates over a uniform space-time grid.
The native space norm constraint supplies locally uniform $C^3$-control through Sobolev embedding,
and the empirical $L^2$-loss on the grid is converted into a bound on the residual by combining the
Wendland kernel interpolation error estimate of \cite[Corollary~11.33]{wen:2005} with a quadrature
bound.

\subsection{Kernel ansatz and native space}\label{sec:3.1}

Let $\Phi:\mathbb{R}^{d+1}\to\mathbb{R}$ be a compactly supported, positive definite radial
basis function of the form $\Phi(y)=\phi(|y|)$ whose Fourier transform satisfies
\begin{equation}
\label{eq:3.decay}
 c_1(1+|\xi|^2)^{-\tau}\le\widehat{\Phi}(\xi)\le c_2(1+|\xi|^2)^{-\tau},
 \qquad \xi\in\mathbb{R}^{d+1},
\end{equation}
for some constants $0<c_1\le c_2$ and a smoothness parameter
\begin{equation}
\label{eq:3.tau}
 \tau>3+\frac{d+1}{2}.
\end{equation}
By \cite[Theorem~10.12]{wen:2005}, the native (reproducing-kernel Hilbert) space $\mathcal{H}$
of $\Phi$ then coincides with the Sobolev space $H^{\tau}(\mathbb{R}^{d+1})$ up to equivalence
of norms; in particular, since $\widehat{\Phi}\ge 0$, one has
$\Phi\in C^{2j}(\mathbb{R}^{d+1})$ for every integer $j$ with $2j<2\tau-(d+1)$.
Since $\tau>3+(d+1)/2$, the Sobolev embedding $H^{\tau}(\mathbb{R}^{d+1})\hookrightarrow C^3_b(\mathbb{R}^{d+1})$
gives, for every $f\in\mathcal{H}$,
\begin{equation}
\label{eq:3.embed}
 \|f\|_{C^{3}_b(\mathbb{R}^{d+1})}\le C\,\|f\|_{\mathcal{H}},
\end{equation}
with a constant $C$ that depends only on the kernel.

With $Q_n$ and $B_{R_n}$ as in \eqref{eq:Qn}, the (anisotropic but uniform) collocation grid
$\Gamma_n\subset\overline{Q_n}$ is the tensor-product grid whose time coordinates are
$kT/N^t_n$, $k=0,\ldots,N^t_n$, and whose spatial coordinates along the $i$th axis are
$\bar x_i-R_n+2R_nj/(N^x_n-1)$, $j=0,\ldots,N^x_n-1$. Thus the grid contains both endpoints in
each coordinate and has $N=N_n=(N^t_n+1)(N^x_n)^d$ total points
$\{y^j\}_{j=1}^{N}=\{(\tilde t^j,\tilde x^j)\}_{j=1}^N$.
Let $Q:=(0,T)\times(-1,1)^d$ and let $\iota_n(s,z):=(s,\bar x+R_nz)$ be the affine map of $\overline Q$
onto $\overline{Q_n}$. We measure the resolution of $\Gamma_n$ by the fill distance of the rescaled grid,
$h_n:=\sup_{y\in Q}\min_{j}|y-\iota_n^{-1}(y^j)|$; since $\iota_n$ is Lipschitz with constant
$\max\{1,R_n\}$, the fill distance of $\Gamma_n$ in $Q_n$ is at most $R_nh_n$ when $R_n\ge1$.
The kernel matrix is
$K=(\Phi(y^i-y^j))_{1\le i,j\le N}\in\mathbb{R}^{N\times N}$, and the candidate approximation to
the value function $v$ is
\begin{equation}
\label{eq:3.kernel.ansatz}
 v_n^{(\lambda)}(y)=\sum_{j=1}^{N}\theta_j\Phi(y-y^j),\quad y\in\mathbb{R}^{d+1},
\end{equation}
with $\theta=\theta^{(n)}\in\mathbb{R}^N$ chosen subject to the native space norm constraint
$\theta^{\mathsf{T}}K\theta\le\lambda^2$ for some $\lambda>0$.

\subsection{Empirical loss and a posteriori bound}\label{sec:3.2}

Recall that $\mu_n$ and $\mu_n^{\rm T}$ are the normalized Lebesgue measures on $Q_n$ and
$B_{R_n}$, respectively.
For a coefficient vector $\theta\in\mathbb{R}^N$ define the population and empirical losses respectively by 
\begin{equation}
\label{eq:3.loss}
\begin{aligned}
 \mathcal L_n(\theta)
 &= \int_{Q_n}|r(v_n^{(\lambda)})(y)|^2\,d\mu_n
 + \int_{B_{R_n}}|v_n^{(\lambda)}(T,x)-g(x)|^2\,d\mu_n^{\rm T}, \\
 \hat{\mathcal L}_n(\theta)
 &= \frac{1}{N}\sum_{\ell=1}^{N}\bigl|\mathbf{r}^{\ell}(\theta)\bigr|^2
 + \frac{1}{|I_T|}\sum_{\ell\in I_T}\bigl|v_n^{(\lambda)}(T,\tilde x^\ell)-g(\tilde x^\ell)\bigr|^2,
\end{aligned}
\end{equation}
where $\mathbf{r}^{\ell}(\theta)=\sum_j\theta_j(\partial_t\Phi)(y^\ell-y^j)+H(y^\ell,D_xv_n^{(\lambda)}(y^\ell),D^2_{xx}v_n^{(\lambda)}(y^\ell))$
is the discrete residual at $y^\ell$, so that $\mathbf{r}^{\ell}(\theta)=r(v_n^{(\lambda)})(y^\ell)$, and $I_T$
indexes the $|I_T|=(N^x_n)^d$ terminal-slice collocation points.
The a posteriori residual bound
\begin{equation}
\label{eq:3.gamma}
 \gamma_n^{(\lambda)}:=\max_{\ell=1,\ldots,N}\bigl(|\mathbf{r}^{\ell}(\theta^{(n)})|\vee
 \mathbf{1}_{I_T}(\ell)|v_n^{(\lambda)}(T,\tilde x^\ell)-g(\tilde x^\ell)|\bigr)
\end{equation}
is computed after the optimization and reported in Section~\ref{sec:5}.

For practical solution we adopt the unconstrained $L^2$ penalty formulation
\begin{equation}
\label{eq:3.Pn-penalty}
 \min_{\theta\in\mathbb{R}^N}\;
 \hat{\mathcal L}_n(\theta)
   + \mu\,\bigl(\theta^{\mathsf{T}}K\theta-\lambda^2\bigr)_+^2,
\end{equation}
solved by the limited-memory BFGS (L-BFGS) method \cite{liu-noc:1989}. The initial point is the kernel interpolant of the
time-constant extension $(t,x)\mapsto g(x)$, rescaled if necessary to satisfy the norm constraint;
the penalty weight $\mu$ is chosen empirically,
and feasibility of the native space constraint $\theta^{\mathsf{T}}K\theta\le\lambda^2$ is checked
for each reported candidate (in the numerical experiments of Section~\ref{sec:5} we use
$\mu=10^2$). A finite penalty weight alone does not guarantee feasibility.

The empirical and population losses are related by the following elementary quadrature bound
for the collocation grid.

\begin{lem}[quadrature on the collocation grid]\label{lem:3.quad}
Assume $R_n\ge 1$, $N^t_n\ge1$ and $N^x_n\ge2$. There is a constant $C$ depending only on $d$ such that for every
Lipschitz function $f$ on $\overline{Q_n}$ and every Lipschitz function $u$ on $\overline{B_{R_n}}$,
\begin{equation}
\label{eq:3.quad}
\begin{aligned}
 \Bigl|\frac1N\sum_{\ell=1}^N f(y^\ell)-\int_{Q_n}f\,d\mu_n\Bigr|
   &\le C\,[f]_{C^{0,1}(\overline{Q_n})}\,R_nh_n, \\
 \Bigl|\frac1{|I_T|}\sum_{\ell\in I_T}u(\tilde x^\ell)-\int_{B_{R_n}}u\,d\mu^{\rm T}_n\Bigr|
   &\le C\,[u]_{C^{0,1}(\overline{B_{R_n}})}\,R_nh_n .
\end{aligned}
\end{equation}
\end{lem}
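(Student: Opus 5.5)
Both estimates in \eqref{eq:3.quad} follow from one argument: partition the box into cells, each with a grid point attached, and compare the average of $f$ over each cell with its value at that point. Take the first (space-time) estimate. The grid $\Gamma_n$ is a tensor product of uniform one-dimensional grids, so I assign to each grid node a product cell $\prod_i J_i$. In each coordinate, the $N^t_n+1$ (resp.\ $N^x_n$) equispaced nodes with spacing $\delta_t=T/N^t_n$ (resp.\ $\delta_x=2R_n/(N^x_n-1)$), endpoints included, split the interval into $N^t_n+1$ (resp.\ $N^x_n$) subintervals. Every subinterval contains its node and has length between $\delta/2$ and $\delta$. The two boundary subintervals have length $\delta/2$, and the interior ones, bounded by midpoints, have length $\delta$.

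The difficulty is that these cells have unequal volumes, while the empirical average puts weight $1/N$ on every node. So I write
\[
\frac1N\sum_\ell f(y^\ell)-\int f\,d\mu_n=\sum_\ell\Bigl(\frac1N-w_\ell\Bigr)f(y^\ell)+\sum_\ell w_\ell\Bigl(f(y^\ell)-\frac{1}{|C_\ell|}\int_{C_\ell}f\Bigr),
\]
where $w_\ell=|C_\ell|/|Q_n|$ are the cell weights.

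The second sum is at most $\max_\ell\operatorname{diam}(C_\ell)\,[f]_{C^{0,1}}$. I bound the cell diameter by a constant multiple of the fill distance: $\operatorname{diam}(C_\ell)\le C(\delta_t+\delta_x)$. In rescaled coordinates the spacings are $\delta_t$ and $\delta_x/R_n$, and $h_n\ge c_d(\delta_t+\delta_x/R_n)$, because the center of a rescaled grid cell lies at distance at least $\tfrac12\sqrt{\delta_t^2+\sum(\delta_x/R_n)^2}$ from every node. With $R_n\ge1$ this gives $\delta_t+\delta_x\le R_n(\delta_t+\delta_x/R_n)\le CR_nh_n$.

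The first sum is the main obstacle: it cannot be bounded by $\|f\|_\infty$, because only the Lipschitz seminorm may appear on the right. I will use that $\sum_\ell(1/N-w_\ell)=0$, so $f(y^\ell)$ can be replaced by $f(y^\ell)-f(y^*)$ for any fixed $y^*$. That alone gives a factor $\operatorname{diam}(Q_n)$, which is too large. Instead I exploit the tensor structure. Both weight vectors factor as products of one-dimensional weights, so the discrepancy telescopes into a sum over coordinates. In each coordinate, one factor is the uniform weight $1/(\#\text{nodes})$ and the other is the cell weight $\delta/L$ or $\delta/(2L)$, where $L$ is the interval length.

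In one dimension with $k$ nodes, the difference between the node average $\frac1k\sum u(z_j)$ and the trapezoid rule $\sum w_ju(z_j)$ equals the difference between the two end corrections. This is a combination with zero total weight of the values $u(z_0)$, $u(z_{k-1})$ and the mean of $u$, with coefficients of order $1/k$. Bounding the spread of $u$ by $[u]\,L$ gives $O([u]L/k)=O([u]\delta)$. Applying this coordinatewise, with the other coordinates integrated or averaged, bounds the first sum by $C[f](\delta_t+\delta_x)\le C[f]R_nh_n$.

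The terminal estimate is the same argument on the $d$-dimensional grid on $B_{R_n}$ with spacing $\delta_x$, and $\delta_x\le CR_nh_n$ as above. The constants depend only on $d$, since the relevant quantities are $k\ge2$ and the cell geometry ratios. The case with few nodes, where $\delta$ is comparable to $L$, is covered by the crude bound by $[f]\operatorname{diam}$, which is again $\le C[f](\delta_t+\delta_x)$.
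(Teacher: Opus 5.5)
Your proof is correct, but it takes a longer route than the paper's. You partition each coordinate interval at the midpoints between nodes, so the two boundary cells have half length. This creates the weight-mismatch term $\sum_\ell(1/N-w_\ell)f(y^\ell)$, which you then control by telescoping over coordinates and comparing the node average with the trapezoid rule in one dimension.

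That comparison is sound. With $k\ge3$ nodes the coefficients $1/k-w_j$ are $(k-2)/(2k(k-1))$ at the two endpoints and $-1/(k(k-1))$ at interior nodes. They sum to zero, and the total contribution is at most $[u]L/k\le[u]\delta$. For $k=2$ the two rules coincide, so your closing remark about ``few nodes'' is unnecessary. Your fill-distance step $h_n\ge c_d(\delta_t+\delta_x/R_n)$ is also fine; it agrees with the explicit formula $h_n^2=(T/(2N^t_n))^2+d/(N^x_n-1)^2$ used in the paper.

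The paper avoids the mismatch by choosing a different partition. It uses cells of equal length: $T/(N^t_n+1)$ in time and $2R_n/N^x_n$ in each space direction. Concretely, $I_k=[kT/(N^t_n+1),(k+1)T/(N^t_n+1))$, with the analogous intervals in space and the right endpoint added to the last cell. The $k$th node $kT/N^t_n$ still lies in $I_k$, because $k/N^t_n<(k+1)/(N^t_n+1)$ exactly when $k<N^t_n$. In space, likewise, $j/(N^x_n-1)<(j+1)/N^x_n$ exactly when $j<N^x_n-1$. So every cell has volume $|Q_n|/N$ and contains its own node.

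The uniform weight $1/N$ is then the exact cell weight, and only your second (oscillation) sum survives. Since the cell sides are bounded by the grid spacings, $\sup_{y\in\mathcal C_\ell}|y-y^\ell|\le2R_nh_n$, which gives the estimate with constant $2$.

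Your decomposition is the more general tool: it quantifies how close uniform node averaging is to the trapezoid rule on equispaced tensor grids. For this lemma, though, the paper's choice of cells removes what you identified as the main obstacle, and the proof shrinks to a two-line oscillation bound.
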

\begin{proof}
Index the grid points by $(k,j_1,\ldots,j_d)$ and set
\[
\begin{aligned}
 I_k&:=\left[\frac{kT}{N^t_n+1},\frac{(k+1)T}{N^t_n+1}\right),\qquad
 J_{j,i}:=\left[\bar x_i-R_n+\frac{2R_nj}{N^x_n},
                       \bar x_i-R_n+\frac{2R_n(j+1)}{N^x_n}\right),\\
 \mathcal C_{k,\mathbf j}&:=I_k\times\prod_{i=1}^dJ_{j_i,i}.
\end{aligned}
\]
Including the right endpoint when $k=N^t_n$ or $j=N^x_n-1$, these cells form a disjoint partition
of $\overline{Q_n}$. Each has volume $|Q_n|/N$, contains its associated grid point, and has side
lengths bounded by the corresponding grid spacings. Since
$h_n^2=(T/(2N^t_n))^2+d/(N^x_n-1)^2$, if $\mathcal C_\ell$ is associated with $y^\ell$, then
$\sup_{y\in\mathcal C_\ell}|y-y^\ell|\le2R_nh_n$. Therefore
\[
 \Bigl|\frac1N\sum_{\ell=1}^N f(y^\ell)-\int_{Q_n}f\,d\mu_n\Bigr|
 =\Bigl|\sum_{\ell=1}^N\frac{1}{|Q_n|}\int_{\mathcal C_\ell}\bigl(f(y^\ell)-f(y)\bigr)\,dy\Bigr|
 \le 2\,[f]_{C^{0,1}(\overline{Q_n})}\,R_nh_n .
\]
The spatial cells $\prod_{i=1}^dJ_{j_i,i}$ give the same estimate on the terminal slice.
\end{proof}

\subsection{Wendland kernel interpolation rate}\label{sec:3.3}

The convergence rate of the kernel realization is determined by the following standard
interpolation error estimate, which we record for the cylinder family $\{Q_n\}$ used here.
For $f\in\mathcal{H}$, let $I_n[f]$ denote its kernel interpolant on $\Gamma_n$, defined by
\[
 I_n[f](y)=\sum_{j=1}^N(K^{-1}\tilde f)^j\,\Phi(y-y^j),
 \qquad
 \tilde f=(f(y^1),\ldots,f(y^N))^{\mathsf{T}}.
\]

\begin{lem}\label{lem:3.kernel}
Assume $R_n\ge1$. There exist $h_0>0$ and $C>0$, independent of $n$ and $f$, such that for every
$f\in\mathcal H$, every multi-index $\alpha$ with $|\alpha|_1\le 2$, and every $n$ with
$R_nh_n\le h_0$,
\begin{equation}
\label{eq:3.kernel.error}
 \sup_{y\in\overline{Q_n}}|D^{\alpha}f(y)-D^{\alpha}I_n[f](y)|\le C\,(R_nh_n)^{\tau-(d+1)/2-|\alpha|_1}\,\|f\|_{\mathcal{H}},
\end{equation}
and $\|I_n[f]\|_{\mathcal{H}}\le\|f\|_{\mathcal{H}}$.
\end{lem}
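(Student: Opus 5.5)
The plan is to reduce the estimate to the standard Wendland sampling-inequality/power-function bound on a fixed reference domain and then to rescale. The norm bound $\|I_n[f]\|_{\mathcal H}\le\|f\|_{\mathcal H}$ is immediate and will be disposed of first. $I_n[f]$ is the $\mathcal H$-orthogonal projection of $f$ onto $\mathrm{span}\{\Phi(\cdot-y^j)\}$, because by the reproducing property $\langle f-I_n[f],\Phi(\cdot-y^j)\rangle_{\mathcal H}=f(y^j)-I_n[f](y^j)=0$. Orthogonal projections are contractions.

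For the error bound I would invoke \cite[Corollary~11.33]{wen:2005} (equivalently Theorem~11.32 with the sampling inequality of Narcowich--Ward--Wendland). For a bounded Lipschitz domain $\Omega\subset\mathbb R^{d+1}$ satisfying an interior cone condition with radius $r$ and angle $\vartheta$, and a kernel with \eqref{eq:3.decay}, it gives constants $h_0(r,\vartheta)$ and $C(r,\vartheta,\tau,d)$ with the following property. Whenever the fill distance $h_{X,\Omega}\le h_0$, every $u\in H^\tau(\Omega)$ vanishing on $X$ satisfies $\|D^\alpha u\|_{L^\infty(\Omega)}\le C h_{X,\Omega}^{\tau-(d+1)/2-|\alpha|_1}\|u\|_{H^\tau(\Omega)}$ for $|\alpha|_1<\tau-(d+1)/2$. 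This covers $|\alpha|_1\le2$ by \eqref{eq:3.tau}. Apply this with $\Omega=Q_n$, $X=\Gamma_n$ and $u=f-I_n[f]$, which vanishes on $\Gamma_n$. Then $\|u\|_{H^\tau(Q_n)}\le\|u\|_{H^\tau(\mathbb R^{d+1})}\le C\|u\|_{\mathcal H}\le 2C\|f\|_{\mathcal H}$ by the norm equivalence of \cite[Theorem~10.12]{wen:2005} and the contraction property. Finally, the fill distance of $\Gamma_n$ in $Q_n$ is at most $R_nh_n$, as noted in Section~\ref{sec:3.1}, and the right-hand side is monotone in $h$. This yields \eqref{eq:3.kernel.error}.

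The main obstacle is uniformity in $n$. The constants $h_0$ and $C$ in Wendland's corollary depend on the domain only through its cone parameters, plus a local covering argument, so they must be shown to be independent of $R_n$. For the boxes $Q_n=(0,T)\times B_{R_n}$ with $R_n\ge1$, every point admits an interior cone with radius $r=\min\{T,1\}/2$ and a fixed angle, pointing toward the interior orthant as in Step~(i) of Lemma~\ref{lem:2.1}. Moreover, the proof of the sampling inequality is purely local: one covers $\Omega$ by balls of radius proportional to $h$, applies a local polynomial reproduction on each cone, and takes a supremum. Hence the constant does not see the diameter of $\Omega$, and $h_0$ and $C$ depend only on $r$, $\vartheta$, $\tau$ and $d$. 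I would state this explicitly, rather than rescaling by $\iota_n$, because rescaling a fixed kernel changes its native space. If needed I would check locality through the $L^\infty$ form of the sampling inequality, which bounds $\|D^\alpha u\|_{L^\infty(\Omega)}$ by a supremum over local cone patches of local $H^\tau$ seminorms, each dominated by the global norm.
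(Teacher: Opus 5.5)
Your proposal is correct and follows the paper's route. The error bound comes from Wendland's Corollary~11.33, via the sampling inequality of Theorem~11.32, applied on $Q_n$ with fill distance at most $R_nh_n$, with uniformity in $n$ from the $n$-independent interior-cone parameters of the boxes; the norm bound is the orthogonal-projection property, which is the content of Wendland's Corollary~10.25. Your extra details fill in what the paper leaves to the references: applying the sampling inequality directly to $f-I_n[f]$ with the global native norm, so that no domain-dependent extension constant enters, and noting that the $L^\infty$ form of the sampling estimate is local.
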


The error estimate follows from \cite[Corollary~11.33]{wen:2005} on $Q_n$, with fill distance at most $R_nh_n$.
Uniformity in $n$ follows from the uniform interior-cone parameters of these boxes; see also
\cite[Lemma~3.6]{nak:2019a}. The final norm inequality follows from
\cite[Corollary~10.25]{wen:2005}.

\subsection{Qualitative convergence}\label{sec:3.4}

The following theorem establishes local uniform convergence of the kernel approximations.

\begin{thm}\label{thm:ker.qual}
Suppose that \textnormal{(A1)} and \textnormal{(A2)} hold, and that \eqref{eq:1.1} admits a classical solution $v$ that is the restriction to
$[0,T]\times\mathbb{R}^d$ of an element of $\mathcal{H}$, still denoted $v$.
Fix $\lambda>\|v\|_{\mathcal{H}}$ and let $\{\theta^{(n)}\}\subset\mathbb{R}^N$ be a sequence
such that
\begin{enumerate}
\item[\textnormal{(K1)}] $\theta^{(n)\mathsf{T}} K\theta^{(n)}\le\lambda^2$ for every $n$;
\item[\textnormal{(K2)}] $\hat{\mathcal L}_n(\theta^{(n)})\le \hat{\mathcal L}_n(\theta^{\rm int})+\varepsilon_n$,
where $\theta^{\rm int}=K^{-1}\tilde v$ is the kernel interpolation of $v$ and $\varepsilon_n\downarrow 0$.
\end{enumerate}
Suppose moreover that $R_n\to\infty$, $R_n^{d+1}h_n\to 0$ and $R_n^d\varepsilon_n\to0$.
Then $\sup_Q|v_n^{(\lambda)}-v|\to0$ for every compact
$Q\subset[0,T]\times\mathbb{R}^d$.
\end{thm}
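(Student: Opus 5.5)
The plan is to feed the candidates $v_n:=v_n^{(\lambda)}$ into Theorem~\ref{thm:tail} and show $\eta_n\to0$. The term $\lambda_n$ stays fixed, so the exponential tail in \eqref{eq:tail.bound} tends to zero as $R_n\to\infty$. By (K1), $\|v_n\|_{\mathcal H}^2=\theta^{(n)\mathsf T}K\theta^{(n)}\le\lambda^2$, and \eqref{eq:3.embed} then gives $\|v_n\|_{C^3_b(\mathbb{R}^{d+1})}\le C\lambda$. We may therefore take $\lambda_n:=\max\{1,C\lambda\}$, constant in $n$. With this choice the proof reduces to showing $\eta_n^{\rm res}\to0$ and $\eta_n^{\rm T}\to0$.

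\textbf{Step 1: bound the empirical loss at the minimizer.} We use (K2), together with the fact that $v$ is a classical solution, so $r(v)=0$ and $v(T,\cdot)=g$. It suffices to bound $\hat{\mathcal L}_n(\theta^{\rm int})$. Set $w:=I_n[v]$. Lemma~\ref{lem:2.F}\,(ii) gives
\[
|r(w)|=|r(w)-r(v)|\le C\bigl(1+\|v\|_{C^3}\bigr)\sum_{|\alpha|_1\le2}|D^\alpha(w-v)|
\]
on $\overline{Q_n}$, and by Lemma~\ref{lem:3.kernel} this is $\le C(R_nh_n)^{\tau-(d+1)/2-2}\|v\|_{\mathcal H}$. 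The exponent is $>1$, and $R_nh_n\to0$ because $R_n^{d+1}h_n\to0$ and $R_n\ge1$. The same bound holds for the terminal mismatch $|w(T,\cdot)-g|$. Hence
\[
\hat{\mathcal L}_n(\theta^{(n)})\le C(R_nh_n)^{2}+\varepsilon_n
\]
for large $n$.

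\textbf{Step 2: pass from the empirical to the population loss.} Apply Lemma~\ref{lem:3.quad} to $f=|r(v_n)|^2$ and $u=|v_n(T,\cdot)-g|^2$. By the $C^3$ bound, Lemma~\ref{lem:2.F}\,(i),(iii) and (A2), these functions are bounded by $C(1+\lambda)^2$ and Lipschitz with constant $C(1+\lambda)^2$, uniformly in $n$. (The product of bounded Lipschitz functions is Lipschitz, and $Q_n$ is convex.) Therefore
\[
\mathcal L_n(\theta^{(n)})\le \hat{\mathcal L}_n(\theta^{(n)})+CR_nh_n\le C\bigl(R_nh_n+\varepsilon_n\bigr).
\]

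\textbf{Step 3: convert $L^2$ bounds to sup-norm bounds.} Here the normalization of the measures matters, and this is the step I expect to be the main obstacle: the growth of $|Q_n|$ has to be matched against the hypotheses $R_n^{d+1}h_n\to0$ and $R_n^d\varepsilon_n\to0$. We have
\[
\|r(v_n)\|_{L^2(Q_n)}^2=|Q_n|\int|r(v_n)|^2d\mu_n\le CR_n^d\bigl(R_nh_n+\varepsilon_n\bigr)=C\bigl(R_n^{d+1}h_n+R_n^d\varepsilon_n\bigr)\to0,
\]
and similarly $\|v_n(T,\cdot)-g\|_{L^2(B_{R_n})}^2\le CR_n^d(R_nh_n+\varepsilon_n)\to0$. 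Now Lemma~\ref{lem:2.1} with $\alpha=1$ and constants uniform in $n$ ($R_n\ge1/2$) applies, with the Lipschitz seminorms bounded by $C(1+\lambda)$ via Lemma~\ref{lem:2.F}\,(iii) and (A2). It gives $\eta_n^{\rm res}\to0$, by continuity of $r(v_n)$ up to $\overline{Q_n}$, and likewise $\eta_n^{\rm T}\to0$.

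\textbf{Conclusion.} Theorem~\ref{thm:tail} then yields $\sup_Q|v-v_n|\le(1+T)\eta_n+C\lambda_n e^{-R_n^2/(8dC_1^2T)}\to0$. Finally, $v$ coincides with the unique bounded viscosity solution, since it is a bounded classical solution by \eqref{eq:3.embed}.
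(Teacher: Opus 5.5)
Your proposal is correct and follows essentially the same route as the paper's proof. Both compare with the interpolant $I_n[v]$ through \textnormal{(K2)} and Lemma~\ref{lem:3.kernel}, and get a uniform $C^3_b$ bound from \textnormal{(K1)} and \eqref{eq:3.embed}. Both then pass to the population loss with Lemma~\ref{lem:3.quad}, absorb the $O(R_n^d)$ volume factor using $R_n^{d+1}h_n\to0$ and $R_n^d\varepsilon_n\to0$, apply Lemma~\ref{lem:2.1} with $\alpha=1$, and conclude with Theorem~\ref{thm:tail} for a constant $\lambda_n$. The remaining differences are cosmetic: you coarsen $(R_nh_n)^{2a}$ to $(R_nh_n)^2$, and you state explicitly that $v$ is the bounded viscosity solution.
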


By Lemma~\ref{lem:3.kernel}, $\theta^{\rm int}$ is feasible; hence \textnormal{(K2)} is not a numerical
test but an analytical comparison with the exact solution, satisfied by every feasible
$\varepsilon_n$-optimal solution.

\begin{proof}
Put $a:=\tau-(d+1)/2-2>1$. Since $H$ is Lipschitz in $(p,X)$ under
\textnormal{(A1)}, Lemma~\ref{lem:3.kernel} applied to $f=v$ gives, for the interpolant
$I_n[v]$, that is, the candidate \eqref{eq:3.kernel.ansatz} with coefficients $\theta^{\rm int}$,
$\sup_{Q_n}|r(I_n[v])|=\sup_{Q_n}|r(I_n[v])-r(v)|\le C\lambda(R_nh_n)^{a}$ and
$\sup_{B_{R_n}}|I_n[v](T,\cdot)-g|\le C\lambda(R_nh_n)^{a+2}$. Hence
$\hat{\mathcal L}_n(\theta^{\rm int})\le C\lambda^2(R_nh_n)^{2a}$, and by \textnormal{(K2)}
$\hat{\mathcal L}_n(\theta^{(n)})\le C\lambda^2(R_nh_n)^{2a}+\varepsilon_n$.
Also, \textnormal{(K1)}, the identity
$\|v_n^{(\lambda)}\|_{\mathcal H}^2=\theta^{(n)\mathsf T}K\theta^{(n)}$, and \eqref{eq:3.embed}
give a uniform $C^3_b$ bound. Lemmas~\ref{lem:2.F} and~\ref{lem:3.quad}, together with
\textnormal{(A2)} and the preceding loss estimate, then bound each normalized population $L^2$
error by $C(1+\lambda)^2R_nh_n+\varepsilon_n$ for large $n$. Since the domain volumes are
$O(R_n^d)$, the assumed limits and Lemma~\ref{lem:2.1} give
$\eta_n^{\rm res}\vee\eta_n^{\rm T}\to0$. Theorem~\ref{thm:tail}, applied with
$\lambda_n=C(1+\lambda)$, completes the proof because its exponential tail tends to zero.
\end{proof}

\subsection{Quantitative error bound}\label{sec:3.5}

The next theorem gives an explicit error bound in terms of the empirical loss, fill distance,
native space radius, and truncation radius.

\begin{thm}\label{thm:ker.HJB}
Suppose that \textnormal{(A1)} and \textnormal{(A2)} hold, let $v$ be the unique bounded continuous
viscosity solution of \eqref{eq:1.1}, and assume $R_n\ge1$ and $R_n\to\infty$.
For each $n$, let $\lambda_n>0$ and $\theta^{(n)}\in\mathbb{R}^{N_n}$ satisfy
$\theta^{(n)\mathsf T}K\theta^{(n)}\le\lambda_n^2$, and let $v_n^{(\lambda_n)}$ be the
corresponding kernel candidate in \eqref{eq:3.kernel.ansatz}.
Let $\hat{\mathcal L}_n^{\rm res}(\theta)$ and $\hat{\mathcal L}_n^{\rm trm}(\theta)$ denote the residual and
terminal parts of $\hat{\mathcal L}_n(\theta)$ in \eqref{eq:3.loss}. There is a constant
$C_{\rm quad}>0$, depending only on the fixed problem data and the kernel, such that, with
\[
 \mathcal E^{\rm res}_n=\hat{\mathcal L}_n^{\rm res}(\theta^{(n)})+C_{\rm quad}(1+\lambda_n)^2R_nh_n,
 \qquad
 \mathcal E^{\rm trm}_n=\hat{\mathcal L}_n^{\rm trm}(\theta^{(n)})+C_{\rm quad}(1+\lambda_n)^2R_nh_n,
\]
the following holds: for every compact $Q\subset[0,T]\times\mathbb{R}^d$ there exist
$C_{\rm err}>0$ and $n_0\in\mathbb{N}$ such that, for every $n\ge n_0$,
\begin{equation}
\label{eq:3.kernel.HJB}
\begin{aligned}
\sup_{Q}|v-v_n^{(\lambda_n)}|
 &\le C_{\rm err}\Bigl[
 R_n^{d/(d+3)}(1+\lambda_n)^{(d+1)/(d+3)}\bigl(\mathcal E^{\rm res}_n\bigr)^{1/(d+3)}
   +R_n^{d/2}\bigl(\mathcal E^{\rm res}_n\bigr)^{1/2} \\
 &\qquad +R_n^{d/(d+2)}(1+\lambda_n)^{d/(d+2)}\bigl(\mathcal E^{\rm trm}_n\bigr)^{1/(d+2)}
   +R_n^{d/2}\bigl(\mathcal E^{\rm trm}_n\bigr)^{1/2} \\
 &\qquad +(1+\lambda_n)\exp\bigl(-R_n^2/(8dC_1^2T)\bigr)\Bigr].
\end{aligned}
\end{equation}
\end{thm}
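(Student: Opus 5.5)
The plan is to chain three earlier results: the quadrature bound (Lemma~\ref{lem:3.quad}) to pass from empirical to population losses, the interpolation inequality (Lemma~\ref{lem:2.1}) to pass from $L^2$ to sup norms, and the truncation estimate (Theorem~\ref{thm:tail}) to reach the viscosity solution on $Q$.

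\emph{Regularity of the candidate.} The identity $\|v_n^{(\lambda_n)}\|_{\mathcal H}^2=\theta^{(n)\mathsf T}K\theta^{(n)}\le\lambda_n^2$ and the embedding \eqref{eq:3.embed} give $\|v_n^{(\lambda_n)}\|_{C^3_b(\mathbb{R}^{d+1})}\le C\lambda_n$. Consequently $v_n^{(\lambda_n)}\in C^3_b([0,T]\times\mathbb{R}^d)$ with $C^{1,2}$-norm at most $C(1+\lambda_n)$. Lemma~\ref{lem:2.F}\,(iii) on the convex box $Q_n$ then yields
\[
[r(v_n^{(\lambda_n)})]_{C^{0,1}(\overline{Q_n})}\le C(1+\lambda_n),\qquad \sup_{\overline{Q_n}}|r(v_n^{(\lambda_n)})|\le C(1+\lambda_n),
\]
the second bound coming from Lemma~\ref{lem:2.F}\,(i). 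For the terminal mismatch $u:=v_n^{(\lambda_n)}(T,\cdot)-g$, (A2) gives $[u]_{C^{0,1}}\le C(1+\lambda_n)$ and $\|u\|_\infty\le C(1+\lambda_n)$.

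\emph{From empirical to population losses.} The squares $|r|^2$ and $|u|^2$ have Lipschitz seminorms at most $2\|\cdot\|_\infty[\cdot]_{C^{0,1}}\le C(1+\lambda_n)^2$. Applying Lemma~\ref{lem:3.quad} to them gives
\[
\int_{Q_n}|r|^2d\mu_n\le\mathcal E^{\rm res}_n,\qquad \int_{B_{R_n}}|u|^2d\mu_n^{\rm T}\le\mathcal E^{\rm trm}_n,
\]
once $C_{\rm quad}$ absorbs all these constants. This fixes $C_{\rm quad}$ as depending only on data and kernel.

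\emph{From $L^2$ to sup norms.} We have $\|r\|_{L^2(Q_n)}^2=|Q_n|\int|r|^2d\mu_n$ with $|Q_n|=T(2R_n)^d$, so $\|r\|_{L^2(Q_n)}\le CR_n^{d/2}(\mathcal E^{\rm res}_n)^{1/2}$. Lemma~\ref{lem:2.1} with $m=d+1$ and $\alpha=1$, whose constant is uniform since $R_n\ge1$, gives
\[
\eta_n^{\rm res}\le C\bigl[(R_n^{d/2}(\mathcal E^{\rm res}_n)^{1/2})^{2/(d+3)}(1+\lambda_n)^{(d+1)/(d+3)}+R_n^{d/2}(\mathcal E^{\rm res}_n)^{1/2}\bigr].
\]
The first term simplifies to $R_n^{d/(d+3)}(1+\lambda_n)^{(d+1)/(d+3)}(\mathcal E^{\rm res}_n)^{1/(d+3)}$, as claimed. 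Sup over the closed cylinder equals sup over the open one by continuity. Similarly, with $m=d$ and $|B_{R_n}|=(2R_n)^d$, the exponents $2/(d+2)$ and $d/(d+2)$ produce the terminal terms.

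\emph{Truncation.} Apply Theorem~\ref{thm:tail} with $\lambda_n$ replaced by $C(1+\lambda_n)\ge1$. Its hypotheses hold because $v_n^{(\lambda_n)}\in C^3_b$ with that $C^{1,2}$ bound. The resulting bound $(1+T)\eta_n+C(1+\lambda_n)e^{-R_n^2/(8dC_1^2T)}$ combines with the above to give \eqref{eq:3.kernel.HJB} for $n\ge n_0$.

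The main care point is bookkeeping rather than a hard obstacle. The $C^{1,2}$ bound must be global on $[0,T]\times\mathbb{R}^d$ for Theorem~\ref{thm:tail}, which the native-space embedding supplies. The interpolation must use the unnormalized $L^2$ norm, which produces the $R_n^{d/\cdot}$ powers. Finally, $C_{\rm quad}$ must stay independent of $n$, since the Lipschitz constants are uniform.
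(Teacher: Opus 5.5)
Your proposal is correct and follows the paper's proof essentially step for step. It uses the native-space embedding for global $C^3_b$ control, then Lemma~\ref{lem:3.quad} applied to the squared residual and squared terminal mismatch (whose Lipschitz constants $C(1+\lambda_n)^2$ fix $C_{\rm quad}$), then Lemma~\ref{lem:2.1} on the unnormalized $L^2$ norms, and finally Theorem~\ref{thm:tail} with $\lambda_n$ replaced by $C(1+\lambda_n)$. Your explicit use of the sup bounds from Lemma~\ref{lem:2.F}\,(i) to control the Lipschitz seminorm of the squares simply spells out what the paper calls the ``squared Lipschitz estimates.''
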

\begin{proof}
Step 1. The assumed constraint and the native space norm identity give
\[
 \|v_n^{(\lambda_n)}\|_{\mathcal H}^2
 =\theta^{(n)\mathsf T}K\theta^{(n)}\le\lambda_n^2.
\]
The Sobolev embedding \eqref{eq:3.embed} therefore gives
$\|v_n^{(\lambda_n)}\|_{C^{3}_b(\mathbb{R}^{d+1})}\le C\lambda_n$. Increasing $C\ge1$ if necessary,
$\|v_n^{(\lambda_n)}\|_{C^{1,2}([0,T]\times\mathbb{R}^d)}\le C(1+\lambda_n)$.
Lemma~\ref{lem:2.F} and
\textnormal{(A2)} give Lipschitz constants at most $C(1+\lambda_n)$ for $r(v_n^{(\lambda_n)})$ on
$\overline{Q_n}$ and for $v_n^{(\lambda_n)}(T,\cdot)-g$ on $\overline{B_{R_n}}$.

Step 2. Choose $C_{\rm quad}$ large enough for the two squared
Lipschitz estimates from Step~1 and apply Lemma~\ref{lem:3.quad} separately to the residual
and terminal parts. This gives
$\int_{Q_n}|r(v_n^{(\lambda_n)})|^2\,d\mu_n\le\mathcal E^{\rm res}_n$ and
$\int_{B_{R_n}}|v_n^{(\lambda_n)}(T,\cdot)-g|^2\,d\mu_n^{\rm T}\le\mathcal E^{\rm trm}_n$. Since
$|Q_n|,|B_{R_n}|\le CR_n^d$,
\[
 \|r(v_n^{(\lambda_n)})\|_{L^2(Q_n)}^2\le CR_n^d\,\mathcal E^{\rm res}_n,\qquad
 \|v_n^{(\lambda_n)}(T,\cdot)-g\|_{L^2(B_{R_n})}^2\le CR_n^d\,\mathcal E^{\rm trm}_n .
\]

Step 3. Lemma~\ref{lem:2.1} with $\alpha=1$, applied with $m=d+1$ on $Q_n$ and
$m=d$ on $B_{R_n}$ and combined with Steps~1 and~2, yields
\[
\begin{aligned}
 \sup_{Q_n}|r(v_n^{(\lambda_n)})|
 &\le C\,(R_n^d\mathcal E^{\rm res}_n)^{1/(d+3)}(1+\lambda_n)^{(d+1)/(d+3)}+C\,(R_n^d\mathcal E^{\rm res}_n)^{1/2},\\
 \sup_{B_{R_n}}|v_n^{(\lambda_n)}(T,\cdot)-g|
 &\le C\,(R_n^d\mathcal E^{\rm trm}_n)^{1/(d+2)}(1+\lambda_n)^{d/(d+2)}+C\,(R_n^d\mathcal E^{\rm trm}_n)^{1/2}.
\end{aligned}
\]

Step 4. By Step~1, $v_n^{(\lambda_n)}\in C^3_b(\mathbb{R}^{d+1})$ satisfies the hypotheses of
Theorem~\ref{thm:tail}, which applies with $\lambda_n$ replaced by
$C(1+\lambda_n)$. Use the sum of the two suprema in Step~3 as an upper bound for the $\eta_n$ of
\eqref{eq:tail.eta}, and choose $C_{\rm err}$ large enough to absorb the interpolation and
truncation constants. This gives \eqref{eq:3.kernel.HJB}.
\end{proof}

\begin{rem}\label{rem:ker.rate}
Estimate \eqref{eq:3.kernel.HJB} is a posteriori: it applies to every
$\theta^{(n)}$ satisfying the native space constraint, and its right-hand side
involves the computed empirical losses and prescribed parameters. If, moreover,
$v$ is a classical solution extending to an element of $\mathcal H$ with $\mathcal H$-norm less
than $\lambda_n$ and $\theta^{(n)}$ satisfies \textnormal{(K2)}, the argument in
the proof of Theorem~\ref{thm:ker.qual} gives
$\mathcal E_n^{\rm res}\vee\mathcal E_n^{\rm trm}
\le C(1+\lambda_n)^2R_nh_n+\varepsilon_n$
whenever $R_nh_n$ is sufficiently small. Consequently, the right-hand side of
\eqref{eq:3.kernel.HJB} tends to zero provided
$(1+\lambda_n)^{d+3}R_n^{d+1}h_n\to0$,
$(1+\lambda_n)^{d+1}R_n^d\varepsilon_n\to0$, and
$(1+\lambda_n)e^{-R_n^2/(8dC_1^2T)}\to0$.
When $\lambda_n\equiv\lambda$, these conditions and $R_n\to\infty$ are precisely
the assumptions of Theorem~\ref{thm:ker.qual}. The powers of $R_n$ enter when
converting the normalized losses into the unnormalized $L^2$ norms required by
Lemma~\ref{lem:2.1}.
\end{rem}

\section{Application to physics-informed neural networks}\label{sec:4}

We now apply the tools of Section~\ref{sec:2} to feedforward neural networks
using smooth activations. Throughout this section the activation function $\varrho$ is subject to
the following condition: 
\begin{enumerate}
\item[\textnormal{(S)}] $\varrho:\mathbb{R}\to\mathbb{R}$ is non-constant, $\varrho\in C^{\infty}(\mathbb{R})$,
$\varrho(0)=0$, $\|\varrho^{(j)}\|_{L^{\infty}(\mathbb{R})}<\infty$ for every $j\ge 0$, and
$\|\varrho'\|_{L^{\infty}(\mathbb{R})}\le 1$.
\end{enumerate}

\begin{rem}\label{rem:4.S}
The conditions $\varrho(0)=0$ and $\|\varrho'\|_{L^{\infty}}\le1$ are harmless normalizations;
only finitely many bounded derivatives are used. Condition \textnormal{(S)} includes $\tanh$ and
other bounded smooth activations. It excludes ReLU and unbounded smooth activations: for the latter
$v_{\theta}$ may grow at infinity, so the global bound
\eqref{eq:3.1.global} with $\alpha=0$, which enters the global $C^{1,2}$-bound required by
Theorem~\ref{thm:tail}, fails.
\end{rem}

\subsection{Network class and training problem}\label{sec:4.1}

Throughout this section the depth $L\in\mathbb{N}$ of the networks is fixed, while the width and
the weight bound may grow with $n$.
For $W\in\mathbb{N}$ and $B>0$, define the class of feedforward networks with activation $\varrho$,
of depth $L$, width at most $W$, and weight bound $B$ by
\begin{equation}
\label{eq:3.1.NN}
\begin{aligned}
 \mathcal V_{L,W,B}&:=\bigl\{v_{\theta}:[0,T]\times\mathbb{R}^d\to\mathbb{R}\,:\,
   v_{\theta}=A^{(L+1)}\varrho\circ(A^{(L)}\,\cdot+b^{(L)})\circ\cdots\circ\varrho\circ(A^{(1)}\,\cdot+b^{(1)})+b^{(L+1)},\\
 &\hspace*{4em}\theta=(A^{(\ell)},b^{(\ell)})_{\ell=1}^{L+1},\;
   A^{(\ell)}\in\mathbb{R}^{n_{\ell}\times n_{\ell-1}},\;
   b^{(\ell)}\in\mathbb{R}^{n_{\ell}},\;n_{\ell}\le W,\;
   |A^{(\ell)}|_{\infty}\vee|b^{(\ell)}|_{\infty}\le B\bigr\},
\end{aligned}
\end{equation}
where $\varrho$ is applied componentwise, $n_0=d+1$, and $n_{L+1}=1$, and $|\cdot|_{\infty}$ denotes
the maximum absolute value of the entries of a vector or matrix. Since $\varrho(0)=0$, a network with some $n_\ell<W$ coincides with a network whose hidden widths are
all equal to $W$, obtained by adding neurons with zero incoming and outgoing weights; we therefore take
$n_\ell=W$ for $1\le\ell\le L$. The resulting number of parameters is
\[
 P(L,W):=W(d+2)+(L-1)(W^2+W)+(W+1)
         =(L-1)W^2+(d+L+2)W+1.
\]
We write $\Theta_{L,W,B}=[-B,B]^{P(L,W)}$ for the corresponding parameter set. Since the class
\eqref{eq:3.1.NN} increases with $W$ and $B$, upper bounds proved for larger
values of these parameters apply to smaller ones; we therefore assume without loss of generality that
$W\ge d+1$ and $B\ge 1$, in which case $P(L,W)\le 2(L+1)W^2$.
By \textnormal{(S)}, every $v_{\theta}\in\mathcal V_{L,W,B}$ belongs to
$C^{\infty}([0,T]\times\mathbb{R}^d)$, and the following bounds hold.

\begin{lem}\label{lem:4.nn}
Assume \textnormal{(S)} and put $K:=WB$ and $P:=P(L,W)$. For every multi-index $\alpha$ there is a constant $C$,
depending only on $d$, $L$, $\varrho$ and $|\alpha|_1$, such that for all
$\theta,\theta'\in\Theta_{L,W,B}$,
\begin{equation}
\label{eq:3.1.global}
 \|D^{\alpha}v_{\theta}\|_{L^{\infty}([0,T]\times\mathbb{R}^d)}\le C\,K^{|\alpha|_1L+1},
\end{equation}
and, for every $R\ge 1$ and every $y\in\mathbb{R}^{d+1}$ with $|y|_{\infty}\le R$,
\begin{equation}
\label{eq:3.1.lip}
 |D^{\alpha}v_{\theta}(y)-D^{\alpha}v_{\theta'}(y)|
   \le C\,P\,R\,K^{(|\alpha|_1+1)(L+1)}\,|\theta-\theta'|_{\infty}.
\end{equation}
\end{lem}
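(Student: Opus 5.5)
We will argue by induction over the layers, tracking the hidden activations $z^{(\ell)}(y):=\varrho(A^{(\ell)}z^{(\ell-1)}(y)+b^{(\ell)})$ with $z^{(0)}(y)=y$, so that $v_\theta=A^{(L+1)}z^{(L)}+b^{(L+1)}$. Throughout, a matrix with entries bounded by $B$ and $W$ columns maps $\ell^\infty$-bounded vectors with $|u|_\infty\le M$ to vectors with $|Au|_\infty\le WBM=KM$; this is the only place $K=WB$ enters, and $W\ge d+1$ handles the first layer, where there are $d+1$ columns.

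\emph{Global bound \eqref{eq:3.1.global}.} For $\alpha=0$, $|z^{(L)}|_\infty\le\|\varrho\|_\infty$ by (S), so $|v_\theta|\le W B\|\varrho\|_\infty+B\le CK$. For $|\alpha|_1=k\ge1$ we will show by induction on $\ell$ that $\sup_y|D^\beta z^{(\ell)}(y)|_\infty\le C K^{|\beta|_1\ell}$ for all $1\le|\beta|_1\le k$. The case $\ell=1$ is immediate: $D^\beta\varrho(A^{(1)}y+b^{(1)})$ is a derivative of $\varrho$ times a product of $|\beta|_1$ entries of $A^{(1)}$, hence bounded by $CB^{|\beta|_1}\le CK^{|\beta|_1}$. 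For the inductive step we apply the multivariate Fa\`a di Bruno formula to $\varrho(u^{(\ell)})$ with $u^{(\ell)}=A^{(\ell)}z^{(\ell-1)}+b^{(\ell)}$. Each term is $\varrho^{(j)}(u^{(\ell)})$ times a product $\prod_{i=1}^jD^{\beta_i}u^{(\ell)}$ with $\sum|\beta_i|_1=|\beta|_1$ and $|\beta_i|_1\ge1$. Since $|D^{\beta_i}u^{(\ell)}|_\infty\le K\cdot CK^{|\beta_i|_1(\ell-1)}$ and $j\le|\beta|_1$, each term is at most $CK^{j+|\beta|_1(\ell-1)}\le CK^{|\beta|_1\ell}$, using $K\ge1$. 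The output layer then contributes one further factor of $K$, giving $CK^{kL+1}$. The number of Fa\`a di Bruno terms depends only on $k$ and $d$, so $C$ depends only on $d$, $L$, $\varrho$ and $|\alpha|_1$.

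\emph{Parameter Lipschitz bound \eqref{eq:3.1.lip}.} We will perturb one layer's parameters at a time via a telescoping sum over layers, or, equivalently, run a parallel induction on $\delta^{(\ell)}_\beta:=\sup_{|y|_\infty\le R}|D^\beta z^{(\ell)}_\theta-D^\beta z^{(\ell)}_{\theta'}|_\infty$ for $|\beta|_1\le k$. Writing $\Delta:=|\theta-\theta'|_\infty$, we have $|u^{(\ell)}_\theta-u^{(\ell)}_{\theta'}|_\infty\le W\Delta\,|z^{(\ell-1)}_\theta|_\infty+K\delta^{(\ell-1)}_0+\Delta$. At the first layer $|z^{(0)}|_\infty=|y|_\infty\le R$; this is why $R$ appears. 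At later layers the activations are bounded, so $R$ enters only linearly, and $W\le P$ absorbs the width factor. For derivatives we again use Fa\`a di Bruno and estimate a difference of products by the usual telescoping: each factor is either a difference, controlled by the induction hypothesis or by the Lipschitz property of $\varrho^{(j)}$ (bounded $\varrho^{(j+1)}$), or one of the bounds from the first part. Every level contributes at most $K^{|\beta|_1+1}$ extra powers. Iterating over the $L+1$ affine maps gives the exponent $(|\alpha|_1+1)(L+1)$, which is generous, together with the prefactor $PR\Delta$, using $W,R,B\ge1$.

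\emph{Main obstacle.} We expect the main difficulty to be keeping the exponent bookkeeping in the second part clean: derivative terms of higher order mix with parameter differences, and the difference of a product splits into many terms. We would handle this by proving the cruder uniform claim $\delta^{(\ell)}_\beta\le C\,W R K^{(|\beta|_1+1)\ell}\Delta$ by induction. Such a crude claim tolerates losing one factor of $K$ per derivative order per layer, which leaves ample room within the stated exponent.
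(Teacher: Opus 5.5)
Your proposal is correct. Its first half, the global bound via forward induction with Fa\`a di Bruno plus one extra factor $K$ from the output layer, is exactly the paper's argument.

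For \eqref{eq:3.1.lip} you take a genuinely different route.

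\emph{The paper's route.} It applies the mean value theorem on the convex cube $\Theta_{L,W,B}$. This reduces the claim to bounding $P\,|\theta-\theta'|_\infty\max_p\sup_\vartheta|D^\alpha\partial_{\vartheta_p}v_\vartheta(y)|$. Each parameter derivative is then written as a backpropagated adjoint $g^{(\ell)}=\partial v_\vartheta/\partial z^{(\ell)}$ times a forward activation $h^{(\ell-1)}$. The $y$-derivatives of the adjoint are bounded by backward induction, by $CK^{L+1-\ell+|\beta|_1L}$. Those of the activation are bounded by the forward estimate, by $CRK^{|\beta|_1(\ell-1)}$.

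\emph{Your route.} You propagate parameter differences forward, and your crude claim $\delta^{(\ell)}_\beta\le CWRK^{(|\beta|_1+1)\ell}\Delta$ does close. Consider the Fa\`a di Bruno expansion at layer $\ell$:
\begin{itemize}
\item The term in which $\varrho^{(j)}(u)$ is differenced is at most $CWRK^{\ell}\cdot K^{j+|\beta|_1(\ell-1)}\Delta\le CWRK^{(|\beta|_1+1)\ell}\Delta$. This uses $|u^{(\ell)}_\theta-u^{(\ell)}_{\theta'}|_\infty\le CWRK^{\ell}\Delta$, $j\le|\beta|_1$, and boundedness of $\varrho^{(j+1)}$.
\item The terms in which a factor $D^{\gamma}u$ is differenced are at most $CWRK^{(|\beta|_1+1)\ell-1}\Delta$.
\end{itemize}
The linear output layer then gives $CWRK^{(|\alpha|_1+1)L+1}\Delta$.

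\emph{Comparison.} Your bound has the same power of $K$ as the paper's $K^{L+1+|\alpha|_1L}$, with the slightly sharper prefactor $W$ in place of $P$. It also avoids the mixed derivatives $D^\alpha\partial_{\vartheta_p}v_\vartheta$ and the adjoint recursion. In exchange, the paper never has to expand differences of Fa\`a di Bruno products: only first-order parameter derivatives appear, and each is a product of two single-index quantities with separate bounds. The gain of $W$ over $P$ is immaterial downstream, because the Lipschitz constant enters the covering number in Proposition~\ref{prop:3.2} only through a logarithm.
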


\begin{proof}
Write $h^{(0)}(y)=y$, $z^{(\ell)}=A^{(\ell)}h^{(\ell-1)}+b^{(\ell)}$ and $h^{(\ell)}=\varrho(z^{(\ell)})$
for $1\le\ell\le L$, so that $v_{\theta}=A^{(L+1)}h^{(L)}+b^{(L+1)}$. Every row and every column of
every $A^{(\ell)}$ has $\ell^1$-norm at most $K$. We first show by induction on $\ell$ that, for $k\ge 1$,
\begin{equation}
\label{eq:3.1.ind}
 \sup_{y}\,\max_i\,\max_{|\beta|_1=k}\,|D^{\beta}h^{(\ell)}_i(y)|\le C\,K^{k\ell}.
\end{equation}
By the Fa\`a di Bruno formula, $D^{\beta}h^{(\ell)}_i$ is a sum, over the partitions $\pi$ of the
multi-index $\beta$, of terms $\varrho^{(|\pi|)}(z^{(\ell)}_i)\prod_{\gamma\in\pi}D^{\gamma}z^{(\ell)}_i$,
and $D^{\gamma}z^{(\ell)}_i=\sum_jA^{(\ell)}_{ij}D^{\gamma}h^{(\ell-1)}_j$. For $\ell=1$ only
$|\gamma|_1=1$ contributes, with $|D^{\gamma}z^{(1)}_i|\le B\le K$; for $\ell\ge 2$ the induction
hypothesis gives $|D^{\gamma}z^{(\ell)}_i|\le CK^{1+|\gamma|_1(\ell-1)}$. Since the derivatives of
$\varrho$ are bounded by \textnormal{(S)}, $|\pi|\le k$ and $K\ge1$, each summand corresponding to a
partition $\pi$ is bounded by $CK^{|\pi|+k(\ell-1)}\le CK^{k\ell}$. The number of such summands
depends only on $d$ and $k$, so it can be absorbed into $C$. This proves \eqref{eq:3.1.ind}. Consequently
$|D^{\alpha}v_{\theta}|\le K\cdot CK^{|\alpha|_1L}$ for $|\alpha|_1\ge1$, while
$|v_{\theta}|\le K\|\varrho\|_{L^{\infty}(\mathbb{R})}+B\le CK$, whence \eqref{eq:3.1.global} follows.

For $\vartheta\in\Theta_{L,W,B}$, let $\vartheta_p$, $1\le p\le P$, denote its $p$th scalar coordinate
after the entries of all weight matrices and bias vectors have been collected into a vector. For
\eqref{eq:3.1.lip}, the parameter set is convex, so the mean value theorem gives
$|D^{\alpha}v_{\theta}(y)-D^{\alpha}v_{\theta'}(y)|\le P\,|\theta-\theta'|_{\infty}\,
\max_p\sup_{\vartheta}|D^{\alpha}\partial_{\vartheta_p}v_{\vartheta}(y)|$, the supremum being taken over
$\vartheta\in\Theta_{L,W,B}$. Let $g^{(\ell)}:=\partial v_{\vartheta}/\partial z^{(\ell)}$, so that
$g^{(L)}=\varrho'(z^{(L)})\odot(A^{(L+1)})^{\mathsf T}$ and
$g^{(\ell-1)}=\varrho'(z^{(\ell-1)})\odot(A^{(\ell)})^{\mathsf T}g^{(\ell)}$, where $\odot$ denotes
componentwise multiplication. Then
$\partial v_{\vartheta}/\partial A^{(\ell)}_{ij}=g^{(\ell)}_ih^{(\ell-1)}_j$ and
$\partial v_{\vartheta}/\partial b^{(\ell)}_i=g^{(\ell)}_i$ for $\ell\le L$, while
$\partial v_{\vartheta}/\partial A^{(L+1)}_{1j}=h^{(L)}_j$ and $\partial v_{\vartheta}/\partial b^{(L+1)}=1$.
The argument leading to \eqref{eq:3.1.ind}, now applied to $\varrho'$, gives
$|D^{\gamma}[\varrho'(z^{(\ell)}_i)]|\le CK^{|\gamma|_1\ell}$. Hence
$|D^{\beta}g^{(L)}_i|\le CK^{1+|\beta|_1L}$. If the required bound holds for $g^{(\ell)}$, then the
backward recurrence above, the Leibniz rule, and the fact that every column of $A^{(\ell)}$ has
$\ell^1$-norm at most $K$ give
\[
 |D^{\beta}g^{(\ell-1)}_i|
 \le C\sum_{\gamma\le\beta}K^{|\gamma|_1(\ell-1)}K^{L+2-\ell}
          K^{(|\beta|_1-|\gamma|_1)L}
 \le CK^{L+2-\ell}K^{|\beta|_1L}.
\]
Thus, by backward induction,
$|D^{\beta}g^{(\ell)}_i|\le CK^{L+1-\ell}K^{|\beta|_1L}$. Moreover,
$|D^{\beta}h^{(\ell-1)}_j|\le CR K^{|\beta|_1(\ell-1)}$, where the factor $R$ accounts for
$h^{(0)}=y$. Hence $|D^{\alpha}\partial_{\vartheta_p}v_{\vartheta}(y)|\le CR K^{L+1+|\alpha|_1L}
\le CR K^{(|\alpha|_1+1)(L+1)}$ for every parameter $\vartheta_p$, and \eqref{eq:3.1.lip} follows.
\end{proof}

In particular, every $v_{\theta}\in\mathcal V_{L,W_n,B_n}$ belongs to $C^3_b([0,T]\times\mathbb{R}^d)$ and
has global $C^{1,2}$-norm at most $\Lambda_n:=C(W_nB_n)^{2L+1}$ with $C\ge1$, as required by
Theorem~\ref{thm:tail}.

For $\theta\in\Theta_{L,W_n,B_n}$ define the population and empirical losses
\begin{align*}
 \mathcal L_n(\theta)
  &= \int_{Q_n}|r(v_{\theta})|^2\,d\mu_n+\int_{B_{R_n}}|v_{\theta}(T,\cdot)-g|^2\,d\mu_n^{\rm T}, \\
 \hat{\mathcal L}_n(\theta)
  &= \frac{1}{M_n}\sum_{\ell=1}^{M_n}\left|r(v_{\theta})(\tilde y^{\ell})\right|^2
   + \frac{1}{M'_n}\sum_{\ell=1}^{M'_n}|v_{\theta}(T,\tilde x^{\ell})-g(\tilde x^{\ell})|^2,
\end{align*}
where $\tilde y^{1},\ldots,\tilde y^{M_n}$ are i.i.d.\ samples from $\mu_n$ and
$\tilde x^{1},\ldots,\tilde x^{M'_n}$ are i.i.d.\ samples from $\mu_n^{\rm T}$, the measures of
Section~\ref{sec:2.1}. For a fixed $s\in\mathbb{N}$ with $s>3+(d+1)/2$ and $\lambda_n>0$ we consider the
Sobolev-constrained problem
\begin{equation}
\tag{$\widehat P_n$}
\label{eq:Phatn}
 \text{minimize}\;\; \hat{\mathcal L}_n(\theta)
 \quad\text{subject to}\;\;  \theta\in\Theta_{L,W_n,B_n}, \;\; \|v_{\theta}\|_{H^s(Q_n)}\le \lambda_n .
\end{equation}
Given $\varepsilon_n>0$, an $\varepsilon_n$-optimal solution $\hat\theta_n$ of \eqref{eq:Phatn}
defines $v_n:=v_{\hat\theta_n}$.

\subsection{Approximation in Sobolev norm}\label{sec:4.2}

The key approximation result used below is that, on a compact set, a smooth function and all its
derivatives up to a prescribed order can be approximated uniformly by a single network with a
non-constant bounded smooth activation.

\begin{prop}[approximation in Sobolev norm]\label{prop:3.1}
Assume \textnormal{(S)}. Let $s\in\mathbb{N}$, $v\in H^{s}([0,T]\times\mathbb{R}^d)$, and let
$\{\delta_n\}$ be any sequence of positive numbers.
For each $n\in\mathbb{N}$ there exist $W_n\in\mathbb N$ with $W_n\ge d+1$, $B_n\ge1$ and a network
$v_{\theta^*_n}\in\mathcal V_{L,W_n,B_n}$ such that
\begin{equation}
\label{eq:3.2.approx}
 \|v-v_{\theta^*_n}\|_{H^s(Q_n)}\le \delta_n,\qquad
 \|v_{\theta^*_n}\|_{H^s(Q_n)}\le \|v\|_{H^s([0,T]\times\mathbb{R}^d)}+\delta_n. 
\end{equation}
\end{prop}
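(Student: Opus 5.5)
Since $Q_n$ is bounded and, for each fixed $n$, the statement only asks for \emph{some} width $W_n$ and weight bound $B_n$, the proposition is a density result applied one $n$ at a time. The plan is to prove, for fixed $n$, that networks of depth $L$ with activation $\varrho$ are dense in $C^s(\overline{Q_n})$ with respect to the $C^s$ norm, and then to pass from $C^s$ to $H^s$. Concretely, fix $n$ and set $\delta:=\delta_n$. First I approximate $v$ by a smooth function. Since $v\in H^s([0,T]\times\mathbb{R}^d)$, I extend $v$ across $t=0$ and $t=T$ by a bounded Sobolev extension operator (reflection in $t$), multiply by a cutoff equal to $1$ near $\overline{Q_n}$, and mollify. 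This gives $u\in C_c^\infty(\mathbb{R}^{d+1})$ with $\|v-u\|_{H^s(Q_n)}\le\delta/2$.

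Second, I approximate $u$ together with all its derivatives of order at most $s$, uniformly on the compact box $\overline{Q_n}$, by a network in $\mathcal V_{L,W,B}$ for some finite $W,B$. For this I invoke the classical simultaneous-approximation theorem for nonpolynomial smooth activations (Hornik 1991; Pinkus 1999, Theorem~4.1). Condition \textnormal{(S)} says $\varrho$ is bounded and non-constant, hence not a polynomial, and $\varrho\in C^\infty$. That theorem then gives density of one-hidden-layer networks $\sum_k c_k\varrho(w_k\cdot y+b_k)$ in $C^s(K)$ for every compact $K$. If $L=1$ this is already a network in $\mathcal V_{1,W,B}$, where $B$ is the maximum of the finitely many weights used. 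Widths below $W$ are padded with zero neurons, which is allowed since $\varrho(0)=0$, and $W$ is enlarged to satisfy $W\ge d+1$. I take $B\ge1$.

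For $L\ge2$, the main obstacle is realizing a shallow network by a depth-$L$ network in the class, since the class fixes the depth exactly. My approach is to pass signals through the extra layers by an approximate identity. Because $\varrho(0)=0$ and $\varrho$ is non-constant, there is a point $z_0$ with $\varrho'(z_0)\ne0$. The map
\[
x\mapsto\frac{\varrho(z_0+\epsilon x)-\varrho(z_0)}{\epsilon\varrho'(z_0)}
\]
converges to $x$ in $C^s$ on compacts as $\epsilon\to0$. Its constant offset can be absorbed into the next layer's bias. I insert $L-1$ such layers, either before the first hidden layer (acting on the inputs $y$) or after it. The composed network then converges in $C^s(\overline{Q_n})$ to the shallow network, by the chain rule and uniform $C^s$ bounds on compact ranges. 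The weights grow like $1/\epsilon$, but this is harmless because $B_n$ may depend on $n$. The width needed is at most $\max\{W_{\rm shallow},d+1\}$ (or twice that, to keep a sign-free copy), which is finite.

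Finally, I choose the approximating network $v_{\theta_n^*}$ with $\|u-v_{\theta_n^*}\|_{C^s(\overline{Q_n})}\le\delta/(2|Q_n|^{1/2}c_s)$, where $c_s$ counts the multi-indices with $|\alpha|_1\le s$. Then $\|u-v_{\theta_n^*}\|_{H^s(Q_n)}\le\delta/2$, so $\|v-v_{\theta_n^*}\|_{H^s(Q_n)}\le\delta_n$. The second inequality in \eqref{eq:3.2.approx} follows from the triangle inequality:
\[
\|v_{\theta_n^*}\|_{H^s(Q_n)}\le\|v\|_{H^s(Q_n)}+\delta_n\le\|v\|_{H^s([0,T]\times\mathbb{R}^d)}+\delta_n.
\]
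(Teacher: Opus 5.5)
Your proposal is correct and follows essentially the same route as the paper. Both arguments approximate $v$ by a smooth function, then by a one-hidden-layer network that approximates simultaneously in $C^s$ (Hornik), then lift to depth $L$ by inserting $L-1$ layers of the rescaled approximate identity $\iota_\kappa(z)=(\varrho(x_0+\kappa z)-\varrho(x_0))/(\kappa\varrho'(x_0))$, and finish with the triangle inequality. The remaining differences are minor: you split the error budget into halves rather than thirds, and you also allow the identity layers to be placed before the first hidden layer.
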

\begin{proof}
Fix $n\in\mathbb{N}$.

Step 1.
Since $Q_n$ is a bounded Lipschitz domain, choose $u_n\in C^{\infty}(\mathbb{R}^{d+1})$ such that
$\|v-u_n\|_{H^s(Q_n)}\le\delta_n/3$. Under \textnormal{(S)}, single-hidden-layer networks with
activation $\varrho$ approximate $u_n$ and its derivatives up to order $s$ uniformly on
$\overline{Q_n}$ \cite{hor-etal:1990,hor:1991}. Hence there is a depth-one network
$w_n(y)=a\cdot\varrho(Ay+b)+c$ such that
$\|u_n-w_n\|_{H^s(Q_n)}\le\delta_n/3$.

Step 2.
If $L\ge2$, choose $x_0\in\mathbb{R}$ with $\varrho'(x_0)\ne0$ and define
$\iota_{\kappa}(z):=(\varrho(x_0+\kappa z)-\varrho(x_0))/(\kappa\varrho'(x_0))$.
Taylor's theorem gives $\iota_{\kappa}\to\mathrm{id}$ in $C^s$ on compact sets as $\kappa\to0$.
We apply $\iota_{\kappa}$ componentwise and define
$w_{n,\kappa}(y):=a\cdot
\iota_{\kappa}^{\circ(L-1)}(\varrho(Ay+b))+c$.
Indeed, writing $\alpha_\kappa=1/(\kappa\varrho'(x_0))$ and
$\beta_\kappa=-\varrho(x_0)/(\kappa\varrho'(x_0))$, we have
$\iota_\kappa(z)=\alpha_\kappa\varrho(\kappa z+x_0)+\beta_\kappa$. Thus each application of
$\iota_\kappa$ adds one hidden layer: its outer affine map is combined with the next affine map, or
with the output layer for the last application. Since $w_n$ has one hidden layer, $w_{n,\kappa}$ has
depth $L$. The range of $\varrho(Ay+b)$ on $\overline{Q_n}$ is compact, and $L$ is fixed;
therefore $w_{n,\kappa}\to w_n$ in $C^s(\overline{Q_n})$, and hence in $H^s(Q_n)$. Choose
$\kappa$ so that $\|w_{n,\kappa}-w_n\|_{H^s(Q_n)}\le\delta_n/3$. If $L=1$, simply set
$w_{n,\kappa}:=w_n$.

Step 3.
Take $v_{\theta_n^*}:=w_{n,\kappa}$ and enlarge its width and weight bound, if necessary, so that
$W_n\ge d+1$ and $B_n\ge1$. The triangle inequality gives the first bound in
\eqref{eq:3.2.approx}. The second follows from that bound and
$\|v\|_{H^s(Q_n)}\le\|v\|_{H^s([0,T]\times\mathbb{R}^d)}$.
\end{proof}

\begin{rem}\label{rem:3.2.rate}
(a) Proposition~\ref{prop:3.1} is qualitative and gives no control of $W_n$ or $B_n$.
Theorem~\ref{thm:3.1} therefore imposes separate generalization and tail conditions involving these
parameters. The generalization term is polynomial in $W_n$ and logarithmic in $B_n$. The tail
condition is needed because the Sobolev constraint is local to $Q_n$, whereas
Theorem~\ref{thm:tail} requires the global $C^{1,2}$-bound
$C(W_nB_n)^{2L+1}$ supplied by Lemma~\ref{lem:4.nn}. In the kernel realization, the native space
constraint is global and its radius is fixed in Theorem~\ref{thm:ker.qual}, so no corresponding
parameter-growth condition is needed.

(b) For $\varrho=\tanh$ and $L=2$, suppose that
$v\in W^{s+k,\infty}((0,T)\times\mathbb{R}^d)$. After rescaling $Q_n$ to the
unit cube, De Ryck, Lanthaler and Mishra~\cite[Theorem~5.1]{drm:2021} give,
up to logarithmic factors,
\[
 \|v-v_{\theta}\|_{W^{s,\infty}(Q_n)}
 \le CR_n^pW_n^{-k/(d+1)},
\]
where $p$ depends only on $s,k,d$, and $B_n$ is polynomially bounded in $W_n$
and $R_n$; see also~\cite{dr-mis:2022}. Hence
the networks in Proposition~\ref{prop:3.1} can be chosen with
$\delta_n\le CR_n^{p+d/2}W_n^{-k/(d+1)}$. This rate exhibits the curse of
dimensionality, although sharper rates are available under additional structure. Any approximation
rate accompanied by compatible width and weight bounds can be inserted into \eqref{eq:3.5.Lstar}.
\end{rem}

Combining Proposition~\ref{prop:3.1} with the Sobolev embedding
$H^s(Q_n)\hookrightarrow C^{1,2}(\overline{Q_n})$ (valid for $s>2+(d+1)/2$,
in particular for $s>3+(d+1)/2$ as assumed) and the Lipschitz estimate of Lemma~\ref{lem:2.F}\,(ii),
the population loss at $\theta^*_n$ is controlled by the approximation error.

\begin{cor}\label{cor:3.1}
Under the assumptions of Proposition~$\ref{prop:3.1}$ with $s>3+(d+1)/2$, and if $v$ is a classical
solution of \eqref{eq:1.1}, we have
\begin{equation}
\label{eq:3.2.loss}
 \mathcal L_n(\theta^*_n)\le C\,\delta_n^2,
\end{equation}
where the constant $C$ depends on $C_1$ and the Sobolev embedding constant
of $H^s(Q_n)\hookrightarrow C^{1,2}(\overline{Q_n})$ (uniform in $n$ for the cylinder family).
\end{cor}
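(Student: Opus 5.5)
We bound the two pieces of $\mathcal L_n(\theta^*_n)$ separately, each by a constant times $\delta_n^2$. Throughout, write $w:=v-v_{\theta^*_n}$, so Proposition~\ref{prop:3.1} gives $\|w\|_{H^s(Q_n)}\le\delta_n$. Since $s>3+(d+1)/2$, the embedding $H^s(Q_n)\hookrightarrow C^{1,2}(\overline{Q_n})$ holds with a constant $C_{\rm emb}$ independent of $n$. This uniformity comes from the reflection-extension remark in Section~\ref{sec:2.1}, which applies once $R_n\ge1$; for the finitely many $n$ with $R_n<1$ we simply enlarge the constant. Hence
\[
 \sup_{\overline{Q_n}}\bigl(|\partial_tw|+|D_xw|+|D^2_{xx}w|+|w|\bigr)\le C_{\rm emb}\,\delta_n .
\]

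For the residual term, we use that $v$ is a classical solution, so $r(v)=0$ on $Q_n$. Lemma~\ref{lem:2.F}\,(ii) at a fixed point $(t,x)$ (with $t=t'$, $x=x'$) then gives pointwise
\[
 |r(v_{\theta^*_n})(t,x)|=|r(v_{\theta^*_n})-r(v)|\le|\partial_tw|+C\bigl(|D_xw|+|D^2_{xx}w|\bigr)\le C\,C_{\rm emb}\,\delta_n .
\]
Since $\mu_n$ is a probability measure, integrating the square yields $\int_{Q_n}|r(v_{\theta^*_n})|^2\,d\mu_n\le C\delta_n^2$. The point to note is that the normalization of $\mu_n$ removes any volume factor $R_n^{d+1}$, so no growth in $n$ appears.

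For the terminal term, we use continuity of $v$ up to $t=T$ together with $v(T,\cdot)=g$. On $B_{R_n}$ this gives $|v_{\theta^*_n}(T,x)-g(x)|=|w(T,x)|\le\sup_{\overline{Q_n}}|w|\le C_{\rm emb}\delta_n$. Here $w$ is continuous on $\overline{Q_n}$ by the embedding, so evaluating it at the slice $t=T$ is legitimate. Integrating against the probability measure $\mu_n^{\rm T}$ gives a bound of $C\delta_n^2$. Adding the two estimates proves \eqref{eq:3.2.loss}, with $C$ depending only on $C_1$, $d$ and $C_{\rm emb}$.

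The main obstacle is the uniformity of $C_{\rm emb}$ in $n$, since $Q_n$ grows. We will handle it through the fixed-collar reflection extension from Section~\ref{sec:2.1}: extending by reflection maps $H^s(Q_n)$ into $H^s$ of a slightly larger box with a constant independent of $R_n$. Because the embedding into $C^{1,2}$ is local, we then apply it on $\mathbb{R}^{d+1}$, after multiplying by a cutoff of fixed width near the boundary. A secondary subtlety is that the proposition only assumes $v\in H^s$, while the corollary needs $v$ to be classical. We need classicality only so that $r(v)=0$ holds pointwise, which is used in the residual step.
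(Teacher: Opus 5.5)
Your proposal is correct and follows essentially the same route as the paper. You use $r(v)=0$ together with Lemma~\ref{lem:2.F}\,(ii) at a fixed point to bound $|r(v_{\theta^*_n})|$ by $C\|v-v_{\theta^*_n}\|_{C^{1,2}(\overline{Q_n})}$, convert this to $C\delta_n$ by the uniform embedding $H^s(Q_n)\hookrightarrow C^{1,2}(\overline{Q_n})$, and treat the terminal part the same way via $g=v(T,\cdot)$; your extra remarks (normalized measures, continuity of $w$ up to $t=T$, finitely many $n$ with $R_n<1$) only make explicit what the paper leaves implicit.
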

\begin{proof}
Since $r(v)\equiv 0$ classically, Lemma~\ref{lem:2.F}\,(ii) applied at a fixed point gives
\(
 |r(v_{\theta^*_n})|=|r(v_{\theta^*_n})-r(v)|\le C\,\|v-v_{\theta^*_n}\|_{C^{1,2}(\overline{Q_n})}.
\)
Sobolev embedding yields $\|v-v_{\theta^*_n}\|_{C^{1,2}(\overline{Q_n})}\le C\,\delta_n$,
and the residual contribution to $\mathcal L_n(\theta^*_n)$ is at most $C\,\delta_n^2$.
The terminal contribution is bounded in the same way, since $g=v(T,\cdot)$.
\end{proof}

\subsection{Generalization on the network class}\label{sec:4.3}

The next lemma converts an empirical $L^2$ bound into a population $L^2$ bound
via Rademacher-type concentration on a Sobolev ball.

For a class $\mathcal G$ of real-valued functions on a measurable set $\mathcal X$
and i.i.d.\ samples $Z_1,\ldots,Z_M$ from a probability measure on $\mathcal X$,
the (population) Rademacher complexity is
\begin{equation}
\label{eq:2.3.RM}
 \mathcal R_M(\mathcal G):=\mathbb{E}\Bigl[\sup_{\phi\in\mathcal G}
 \Bigl|\frac{2}{M}\sum_{i=1}^M\zeta_i\,\phi(Z_i)\Bigr|\Bigr],
\end{equation}
where $\zeta_1,\ldots,\zeta_M$ are i.i.d.\ Rademacher variables, that is,
$\mathbb{P}(\zeta_i=1)=\mathbb{P}(\zeta_i=-1)=1/2$, independent of $\{Z_i\}$.

\begin{lem}[Generalization on a Sobolev ball]\label{lem:2.3}
Let $\mathcal F_n=\{v_{\theta}: \theta\in\Theta_{L,W_n,B_n},\; \|v_{\theta}\|_{H^s(Q_n)}\le \lambda_n\}$, and define
\begin{align*}
 \mathcal F_n^{\rm r}&:=\{r(v_{\theta}):v_{\theta}\in\mathcal F_n\},\qquad
 \mathcal F_n^{\rm T}:=\{v_{\theta}(T,\cdot)-g:v_{\theta}\in\mathcal F_n\}.
\end{align*}
Suppose there exists $\Gamma_n>0$ such that $|\phi|\le \Gamma_n$ on $\overline{Q_n}$ for every $\phi\in\mathcal F_n^{\rm r}$
and $|\psi|\le \Gamma_n$ on $\overline{B_{R_n}}$ for every $\psi\in\mathcal F_n^{\rm T}$.
Then for any $\delta\in (0,1)$, with probability at least $1-\delta$ over the i.i.d.\ samples,
\begin{equation}
\label{eq:2.3.gen}
 \sup_{v_{\theta}\in\mathcal F_n}\bigl|\mathcal L_n(\theta)-\hat{\mathcal L}_n(\theta)\bigr|
   \le 4 \Gamma_n\bigl(\mathcal R_{M_n}(\mathcal F_n^{\rm r})+\mathcal R_{M'_n}(\mathcal F_n^{\rm T})\bigr)
    + \Gamma_n^2\,\sqrt{\frac{2\log(2/\delta)}{M_n\wedge M'_n}}.
\end{equation}
\end{lem}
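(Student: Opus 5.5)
The plan is to treat the residual and terminal parts of the loss separately and then combine them with a union bound. Write $\mathcal L_n=\mathcal L_n^{\rm r}+\mathcal L_n^{\rm T}$ and $\hat{\mathcal L}_n=\hat{\mathcal L}_n^{\rm r}+\hat{\mathcal L}_n^{\rm T}$. The supremum of the difference over $\mathcal F_n$ is then at most $\sup_{\phi\in\mathcal F_n^{\rm r}}|\int\phi^2\,d\mu_n-\frac1{M_n}\sum\phi(\tilde y^\ell)^2|$ plus the analogous terminal quantity. Each of these is a uniform deviation of an empirical mean over the squared class $\{\phi^2\}$, whose members take values in $[0,\Gamma_n^2]$.

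For one class, say $\mathcal G=\{\phi^2:\phi\in\mathcal F_n^{\rm r}\}$ with $M=M_n$ samples, I would use the standard two-step argument.

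\textbf{Step 1 (concentration).} Let $Z:=\sup_{\mathcal G}|P\phi^2-P_M\phi^2|$. Changing one sample changes $Z$ by at most $\Gamma_n^2/M$. McDiarmid's inequality therefore gives, with probability at least $1-\delta/2$,
\[
Z\le\mathbb E Z+\Gamma_n^2\sqrt{\log(2/\delta)/(2M)}.
\]

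\textbf{Step 2 (symmetrization).} Introduce a ghost sample and Rademacher signs to obtain $\mathbb E Z\le\mathcal R_M(\mathcal G)$. Note that definition \eqref{eq:2.3.RM} already contains the factor $2$, so no additional constant appears here.

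\textbf{Step 3 (contraction).} The map $u\mapsto u^2$ is $2\Gamma_n$-Lipschitz on $[-\Gamma_n,\Gamma_n]$ and vanishes at $0$. The Ledoux--Talagrand contraction principle, in its absolute-value form, then gives $\mathcal R_M(\mathcal G)\le 2\cdot 2\Gamma_n\mathcal R_M(\mathcal F_n^{\rm r})=4\Gamma_n\mathcal R_M(\mathcal F_n^{\rm r})$. The extra factor $2$ is the price of the absolute value in the contraction inequality. This accounts for the constant $4\Gamma_n$ in \eqref{eq:2.3.gen}.

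Apply the same three steps to the terminal class, with confidence $\delta/2$. A union bound then shows that both deviation bounds hold simultaneously with probability at least $1-\delta$. Adding them, and bounding $1/M_n$ and $1/M'_n$ by $1/(M_n\wedge M'_n)$, gives a fluctuation term of $2\Gamma_n^2\sqrt{\log(2/\delta)/(2(M_n\wedge M'_n))}=\Gamma_n^2\sqrt{2\log(2/\delta)/(M_n\wedge M'_n)}$. This matches \eqref{eq:2.3.gen} exactly.

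\textbf{Measurability.} Because $\theta\mapsto r(v_\theta)(y)$ is continuous by Lemma~\ref{lem:4.nn} and Lemma~\ref{lem:2.F}, and the parameter set with the closed $H^s$ constraint is compact, each supremum equals a supremum over a countable dense set of parameters. The suprema are therefore measurable.

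\textbf{Main obstacle.} The delicate point is bookkeeping the constants in the contraction step. The absolute-value version of Talagrand's inequality costs a factor $2$, and the Rademacher definition here already carries a factor $2$, so one has to check that symmetrization plus contraction yields exactly $4\Gamma_n$. If the absolute-value contraction proves awkward, the fallback is to apply the one-sided version of the contraction principle to $\sup(P-P_M)$ and to $\sup(P_M-P)$ separately, which gives the same constant.
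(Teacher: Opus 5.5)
Your proposal is correct and follows the paper's proof essentially step by step. The paper likewise splits the loss into residual and terminal parts and bounds the expected uniform deviation of the squared class by ghost-sample symmetrization. It then applies the absolute-value contraction estimate of Bartlett--Mendelson (Theorem~12(4)) to a $2\Gamma_n$-Lipschitz extension of $u\mapsto u^2$ vanishing at the origin, which gives $4\Gamma_n\mathcal R_{M_n}(\mathcal F_n^{\rm r})$. It finishes with McDiarmid's inequality, using bounded differences $\Gamma_n^2/M_n$ at level $\delta/2$, and a union bound.

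Your measurability remark is a harmless addition that the paper omits. Your one-sided fallback is unnecessary, and as sketched it would change the constants slightly rather than reproduce them exactly.
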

\begin{proof}
For the residual part, set $\mathcal H_n^{\rm r}:=\{|\phi|^2:\phi\in\mathcal F_n^{\rm r}\}$ and let
$G_n^{\rm r}$ be the supremum over $h\in\mathcal H_n^{\rm r}$ of
$|\int_{Q_n}h\,d\mu_n-M_n^{-1}\sum_{\ell=1}^{M_n}h(\tilde y^\ell)|$. A ghost-sample argument
\cite[proof of Theorem~8]{bar-men:2002} and the contraction estimate
\cite[Theorem~12(4)]{bar-men:2002}, applied to a $2\Gamma_n$-Lipschitz extension of $u\mapsto u^2$
that vanishes at the origin, give
$\mathbb E[G_n^{\rm r}]\le\mathcal R_{M_n}(\mathcal H_n^{\rm r})
\le4\Gamma_n\mathcal R_{M_n}(\mathcal F_n^{\rm r})$.
If one sample $\tilde y^j$ is replaced while the others are fixed, then $0\le h\le\Gamma_n^2$ shows
that $G_n^{\rm r}$ changes by at most $\Gamma_n^2/M_n$. McDiarmid's inequality therefore implies,
with probability at least
$1-\delta/2$, $G_n^{\rm r}\le\mathbb E[G_n^{\rm r}]
+\Gamma_n^2\sqrt{\log(2/\delta)/(2M_n)}$.
Define $G_n^{\rm T}$ analogously. The same argument, a union bound, and
$\sup_{v_\theta\in\mathcal F_n}|\mathcal L_n(\theta)-\hat{\mathcal L}_n(\theta)|
\le G_n^{\rm r}+G_n^{\rm T}$, together with
\[
 \sqrt{\frac{\log(2/\delta)}{2M_n}}+\sqrt{\frac{\log(2/\delta)}{2M'_n}}
 \le \sqrt{\frac{2\log(2/\delta)}{M_n\wedge M'_n}},
\]
proves \eqref{eq:2.3.gen}.
\end{proof}

We bound the Rademacher complexities of the residual class $\mathcal F_n^{\rm r}$ and the terminal
class $\mathcal F_n^{\rm T}$ of Lemma~\ref{lem:2.3} by a covering argument in the parameters, which
uses only the regularity of $\theta\mapsto v_{\theta}$ from Lemma~\ref{lem:4.nn} and the Sobolev
constraint in \eqref{eq:Phatn}. Write $P_n:=P(L,W_n)\le 2(L+1)W_n^2$ for the number of parameters
of $\mathcal V_{L,W_n,B_n}$.

\begin{prop}\label{prop:3.2}
Assume \textnormal{(S)}, \textnormal{(A1)} and \textnormal{(A2)}, let
$s>3+(d+1)/2$, $W_n\ge d+1$, $B_n\ge1$, and let
$\mathcal F_n=\{v_{\theta}:\theta\in\Theta_{L,W_n,B_n},\;\|v_{\theta}\|_{H^s(Q_n)}\le\lambda_n\}$.
Suppose that the samples in \eqref{eq:2.3.RM} take values in $\overline{Q_n}$ for the residual class
and in $\overline{B_{R_n}}$ for the terminal class. Then there is a constant $C\ge1$, depending only on
$d$, $L$, $\varrho$, $T$, $\bar x$, $C_1$, $\|g\|_{L^{\infty}}$ and the constant of the Sobolev
embedding $H^s(Q_n)\hookrightarrow C^2(\overline{Q_n})$ (uniform in $n$), such that for every $M\ge 2$,
\begin{equation}
\label{eq:3.3.Rademacher}
\begin{aligned}
 \mathcal R_M(\mathcal F_n^{\rm r})+\mathcal R_M(\mathcal F_n^{\rm T})
   &\le C\,(1+\lambda_n)\sqrt{\frac{P_n\log\bigl(M(1+R_n)W_nB_n\bigr)}{M}} \\
   &\le C\,(1+\lambda_n)\,W_n\sqrt{\frac{\log\bigl(M(1+R_n)W_nB_n\bigr)}{M}}.
\end{aligned}
\end{equation}
\end{prop}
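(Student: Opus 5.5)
We prove \eqref{eq:3.3.Rademacher} by a finite-class (Massart) bound combined with a discretization of the parameter cube $\Theta_{L,W_n,B_n}$. Two ingredients are needed: a uniform sup-norm envelope for both classes, which comes from the Sobolev constraint, and a Lipschitz estimate in the parameters, which comes from Lemma~\ref{lem:4.nn}.

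\emph{Envelope.} For $v_\theta\in\mathcal F_n$, the embedding $H^s(Q_n)\hookrightarrow C^{2}(\overline{Q_n})$ (in fact $C^3$, since $s>3+(d+1)/2$) is uniform in $n$. It gives $\|v_\theta\|_{C^{1,2}(\overline{Q_n})}\le C\lambda_n$. Lemma~\ref{lem:2.F}\,(i) then yields $|r(v_\theta)|\le C(1+\lambda_n)$ on $\overline{Q_n}$. Together with (A2), it also yields $|v_\theta(T,\cdot)-g|\le C(1+\lambda_n)$ on $\overline{B_{R_n}}$. Call this envelope $\Gamma:=C(1+\lambda_n)$.

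\emph{Parameter Lipschitz bound.} Apply \eqref{eq:3.1.lip} with $|\alpha|_1\le 2$ and $R:=|\bar x|_\infty+R_n+T\le C(1+R_n)$, so that every $y\in\overline{Q_n}$ satisfies $|y|_\infty\le R$. Since $H$ is Lipschitz in $(p,X)$ with constant $C$ by Lemma~\ref{lem:2.F}\,(ii), for $\theta,\theta'\in\Theta_{L,W_n,B_n}$ we get
\[
 \sup_{\overline{Q_n}}|r(v_\theta)-r(v_{\theta'})|+\sup_{\overline{B_{R_n}}}|v_\theta(T,\cdot)-v_{\theta'}(T,\cdot)|
 \le C P_n(1+R_n)(W_nB_n)^{3(L+1)}|\theta-\theta'|_\infty .
\]
We are free to use the full parameter cube here, because the Lipschitz bound only needs $\theta,\theta'$ in the convex set $\Theta_{L,W_n,B_n}$.

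\emph{Covering and Massart.} Fix $\varepsilon:=1/\sqrt M$ and take an $|\cdot|_\infty$-grid of $\Theta_{L,W_n,B_n}$ with mesh $\varepsilon/\bigl(CP_n(1+R_n)(W_nB_n)^{3(L+1)}\bigr)$. It has at most $N_\varepsilon\le\bigl(1+2B_nCP_n(1+R_n)(W_nB_n)^{3(L+1)}\sqrt M\bigr)^{P_n}$ points. For each grid point whose $\varepsilon$-cell meets $\mathcal F_n$, choose a representative $\theta_k$ of that cell lying in $\mathcal F_n$. Every element of $\mathcal F_n$ is then within $2\varepsilon$ (in sup norm on the sample domain) of some representative.

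This gives, conditionally on the samples,
\[
 \mathbb E_\zeta\sup_{\phi}\Bigl|\frac2M\sum_i\zeta_i\phi(Z_i)\Bigr|\le 4\varepsilon+\mathbb E_\zeta\max_k\Bigl|\frac2M\sum_i\zeta_i\phi_k(Z_i)\Bigr|\le 4\varepsilon+2\Gamma\sqrt{\frac{2\log(2N_\varepsilon)}{M}}.
\]
The second inequality is Massart's finite-class lemma, using the envelope $\Gamma$ for each representative. Finally, $\log N_\varepsilon\le CP_n\log\bigl(M(1+R_n)W_nB_n\bigr)$, because $P_n\le 2(L+1)W_n^2$, $\log P_n\le C\log W_n$, $B_n,W_n\ge1$, $M\ge2$, and the exponent $3(L+1)$ is absorbed into $C$. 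The term $4/\sqrt M$ is dominated since $\lambda_n\ge0$ and the logarithm is bounded below. Doing this for both classes gives the first line of \eqref{eq:3.3.Rademacher}. The second line follows from $P_n\le 2(L+1)W_n^2$.

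\emph{Main obstacle.} The key point is to use the envelope from the Sobolev constraint, $\Gamma\sim 1+\lambda_n$, rather than the global bound \eqref{eq:3.1.global}. The global bound would give a prefactor polynomial in $W_nB_n$ instead of $1+\lambda_n$. This is why the representatives of the cover must lie in $\mathcal F_n$ itself, which the cell-selection step above ensures. The polynomial parameter-Lipschitz constant then enters only logarithmically, through $\log N_\varepsilon$.
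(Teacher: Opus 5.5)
Your proposal is correct and follows the paper's proof essentially step for step. Like the paper, you use the envelope $C(1+\lambda_n)$ from the uniform Sobolev embedding and Lemma~\ref{lem:2.F}\,(i), the parameter-Lipschitz constant $CP_n(1+R_n)(W_nB_n)^{3(L+1)}$ from Lemma~\ref{lem:2.F}\,(ii) and \eqref{eq:3.1.lip}, a covering of the parameter cube with representatives chosen inside the constrained set, and Massart's finite-class bound. The remaining differences are cosmetic and do not affect the bound: you take $\varepsilon=1/\sqrt M$ where the paper takes $\Gamma_n/\sqrt M$, and you should set $R:=\max\{1,|\bar x|_\infty+R_n+T\}$ to meet the requirement $R\ge1$ in \eqref{eq:3.1.lip}.
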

\begin{proof}
Throughout, $C$ denotes a constant with the dependence stated above, varying from line to line, and
$K_n:=W_nB_n\ge1$. We treat the residual class; the terminal class is handled in the same way.

Step 1. 
Set $r:=\min\{T,1\}$. Since $R_n\to\infty$, for all sufficiently large $n$ and every
$y\in\overline{Q_n}$ there is a translate $U_{n,y}$ of $(0,r)^{d+1}$ such that
$y\in\overline{U_{n,y}}\subset\overline{Q_n}$. Sobolev embedding on this fixed cube and translation
invariance give, for $|\alpha|_1\le2$,
\[
 |D^\alpha u(y)|\le C\|u\|_{H^s(U_{n,y})}\le C\|u\|_{H^s(Q_n)},
 \qquad s>2+(d+1)/2,
\]
where $C$ is independent of $n$ and $y$. The finitely many remaining $n$ can be absorbed by increasing
$C$. Since $s>3+(d+1)/2$ and $\|v_\theta\|_{H^s(Q_n)}\le\lambda_n$, it follows that
$\|v_{\theta}\|_{C^2(\overline{Q_n})}\le C\lambda_n$ for $v_{\theta}\in\mathcal F_n$.
By Lemma~\ref{lem:2.F}\,(i) and \textnormal{(A2)},
\begin{equation}
\label{eq:3.3.env}
 \sup_{\overline{Q_n}}|r(v_{\theta})|\le\Gamma_n,\qquad
 \sup_{\overline{B_{R_n}}}|v_{\theta}(T,\cdot)-g|\le\Gamma_n,\qquad
 \Gamma_n:=C(1+\lambda_n),\qquad v_{\theta}\in\mathcal F_n .
\end{equation}

Step 2.
Let $\bar R_n:=\max\{1,T,|\bar x|_{\infty}+R_n\}\le C(1+R_n)$, so that $|y|_{\infty}\le\bar R_n$ on
$\overline{Q_n}$. Lemma~\ref{lem:2.F}\,(ii), applied with the same $(t,x)$ in both arguments, gives
$|r(v_{\theta})(y)-r(v_{\theta'})(y)|\le|\partial_t(v_{\theta}-v_{\theta'})(y)|+C|D_x(v_{\theta}-v_{\theta'})(y)|
+C|D^2_{xx}(v_{\theta}-v_{\theta'})(y)|$, and \eqref{eq:3.1.lip} for $|\alpha|_1\le 2$ yields
\begin{equation}
\label{eq:3.3.lipres}
 |r(v_{\theta})(y)-r(v_{\theta'})(y)|\le\mathrm{Lip}_n\,|\theta-\theta'|_{\infty},\qquad
 \mathrm{Lip}_n:=C\,P_n\,\bar R_n\,K_n^{3L+3},
\end{equation}
for all $\theta,\theta'\in\Theta_{L,W_n,B_n}$ and $y\in\overline{Q_n}$. Note that $\theta'$ need not
satisfy the Sobolev constraint here.

Step 3.
Fix $z_1,\ldots,z_M\in\overline{Q_n}$ and $\epsilon>0$. The cube $[-B_n,B_n]^{P_n}$ is the union of at
most $N:=(1+B_n\mathrm{Lip}_n/\epsilon)^{P_n}$ cubes of side $2\epsilon/\mathrm{Lip}_n$. Choosing, in each
of them that meets the constrained set $\{\theta:v_{\theta}\in\mathcal F_n\}$, one parameter of that
set, \eqref{eq:3.3.lipres} provides $\phi_1,\ldots,\phi_{N'}\in\mathcal F_n^{\rm r}$, $N'\le N$, such that
every $\phi\in\mathcal F_n^{\rm r}$ satisfies $\max_i|\phi(z_i)-\phi_j(z_i)|\le2\epsilon$ for some $j$.
Hence
\[
 \mathbb{E}\sup_{\phi\in\mathcal F_n^{\rm r}}
 \left|\frac2M\sum_{i=1}^M\zeta_i\phi(z_i)\right|
 \le 4\epsilon+\frac2M\mathbb{E}\max_{j\le N'}
 \left|\sum_{i=1}^M\zeta_i\phi_j(z_i)\right|
 \le 4\epsilon+2\Gamma_n\sqrt{\frac{2\log(2N)}{M}}.
\]
To justify the last inequality, set $a_j:=(\phi_j(z_i))_{i\le M}\in\mathbb R^M$ and
$\mathcal A:=\{\pm a_j:j\le N'\}$. Then
$\max_j|\sum_i\zeta_i\phi_j(z_i)|=\max_{a\in\mathcal A}\sum_{i=1}^M\zeta_i a_i$,
$|\mathcal A|\le2N'\le2N$, and \eqref{eq:3.3.env} gives
$\max_{a\in\mathcal A}|a|\le\Gamma_n\sqrt M$. Massart's finite-class bound, obtained from
Hoeffding's exponential-moment estimate \cite{hoe:1963}, therefore gives
\[
 \mathbb E\max_{j\le N'}\left|\sum_{i=1}^M\zeta_i\phi_j(z_i)\right|
 \le\Gamma_n\sqrt{2M\log(2N')}\le\Gamma_n\sqrt{2M\log(2N)}.
\]
Multiplication by $2/M$ gives the last term in the preceding display.

Step 4.
Take $\epsilon=\Gamma_n/\sqrt M$. Since $\Gamma_n\ge1$, $B_n\le K_n$, $P_n\le 2(L+1)K_n^2$ and $M\ge2$,
\[
 \log(2N)\le \log 2+P_n\log\bigl(1+\sqrt M\,B_n\mathrm{Lip}_n\bigr)
 \le C\,P_n\log\bigl(M(1+R_n)K_n\bigr),
\]
so the Rademacher average in the first display of Step~3 is at most
$C\Gamma_n\sqrt{P_n\log(M(1+R_n)K_n)/M}$. This
bound does not depend on $z_1,\ldots,z_M$, so it persists after taking expectation over the samples.
For the terminal class one uses \eqref{eq:3.1.lip} with $\alpha=0$ at the points $(T,x)$,
$x\in\overline{B_{R_n}}$. The fixed function $g$ cancels in differences between parameter values and
is already included in the envelope bound \eqref{eq:3.3.env}. The second inequality in
\eqref{eq:3.3.Rademacher} follows from $P_n\le2(L+1)W_n^2$.
\end{proof}

\begin{rem}\label{rem:3.3.alt}
The dependence on $P_n$ in \eqref{eq:3.3.Rademacher} comes from covering the parameter cube, and the
same argument applies to the derivative classes entering the residual. Norm based bounds for network
outputs are available for Lipschitz activations vanishing at zero \cite{bar-fos-tel:2017} and, with
milder depth dependence, for positively homogeneous activations \cite{gol-rak-sha:2018}. These results
do not directly cover the derivative classes considered here, and positive homogeneity excludes
nonconstant bounded activations. The resulting bound is polynomial in $W_n$ and logarithmic in $B_n$
and $R_n$, while the fixed depth $L$ enters only through the constant.
\end{rem}

\subsection{Qualitative convergence}\label{sec:4.4}

The following theorem gives local uniform convergence in probability of the Sobolev-constrained network approximations.
\begin{thm}\label{thm:3.1}
Suppose that \textnormal{(S)}, \textnormal{(A1)} and \textnormal{(A2)} hold, and that \eqref{eq:1.1} has a classical solution
$v\in H^{s}([0,T]\times\mathbb{R}^d)$ for some integer $s>3+(d+1)/2$.
Fix $\lambda>\|v\|_{H^s([0,T]\times\mathbb{R}^d)}$, let $\delta_n>0$ satisfy $\delta_n\to0$ and let $W_n,B_n$ be as in
Proposition~$\ref{prop:3.1}$. For each $n$, let $v_n:=v_{\hat\theta_n}$ be $\varepsilon_n$-optimal solutions of
\eqref{eq:Phatn} on $\mathcal V_{L,W_n,B_n}$ with $\lambda_n=\lambda$. Suppose that $R_n\to\infty$,
$m_n:=M_n\wedge M'_n\to\infty$ and
\begin{equation}
\label{eq:3.4.scal}
 R_n^d\Bigl(\delta_n^2+W_n\sqrt{\frac{\log\bigl(m_n(1+R_n)W_nB_n\bigr)}{m_n}}+\varepsilon_n\Bigr)\to0,
 \qquad (W_nB_n)^{2L+1}e^{-R_n^2/(8dC_1^2T)}\to0 .
\end{equation}
Then $\sup_{Q}|v_n-v|\to0$ in probability for every compact $Q\subset[0,T]\times\mathbb{R}^d$.
\end{thm}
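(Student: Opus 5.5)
The proof mirrors that of Theorem~\ref{thm:ker.qual}: empirical loss, then population loss, then sup-norm residual, then the tail estimate. The kernel quadrature step is replaced by the high-probability generalization bound of Lemma~\ref{lem:2.3} and Proposition~\ref{prop:3.2}.

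\emph{Step 1: bound the empirical loss.} By Proposition~\ref{prop:3.1}, $\|v_{\theta^*_n}\|_{H^s(Q_n)}\le\|v\|_{H^s}+\delta_n\le\lambda$ for large $n$, since $\delta_n\to0$ and $\lambda>\|v\|_{H^s}$. Hence $\theta^*_n$ is feasible for \eqref{eq:Phatn}, and $\varepsilon_n$-optimality gives $\hat{\mathcal L}_n(\hat\theta_n)\le\hat{\mathcal L}_n(\theta^*_n)+\varepsilon_n$. Both $\hat\theta_n$ and $\theta^*_n$ belong to the constrained class $\mathcal F_n$. Lemma~\ref{lem:2.3} applies with $\Gamma_n=C(1+\lambda)$ from \eqref{eq:3.3.env} and with the Rademacher bound \eqref{eq:3.3.Rademacher}, where $M$ is replaced by $M_n$ and $M'_n$ (both $\ge m_n$; the bound is monotone in $M$ up to constants). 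It gives, with probability at least $1-\delta$,
\[
 \sup_{\mathcal F_n}|\mathcal L_n-\hat{\mathcal L}_n|\le \Delta_n:=C\,W_n\sqrt{\frac{\log(m_n(1+R_n)W_nB_n)}{m_n}}+C\sqrt{\frac{\log(2/\delta)}{m_n}}.
\]
Using Corollary~\ref{cor:3.1}, $\mathcal L_n(\hat\theta_n)\le\mathcal L_n(\theta^*_n)+2\Delta_n+\varepsilon_n\le C\delta_n^2+2\Delta_n+\varepsilon_n$ on this event. For fixed $\delta$, condition \eqref{eq:3.4.scal} gives $R_n^d\mathcal L_n(\hat\theta_n)\to0$, since the $\sqrt{\log(2/\delta)/m_n}$ term is dominated by the $W_n$ term because $W_n\ge1$.

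\emph{Step 2: from $L^2$ to sup norms.} Multiplying by $|Q_n|,|B_{R_n}|\le CR_n^d$, the unnormalized $L^2$ norms of $r(v_n)$ on $Q_n$ and of $v_n(T,\cdot)-g$ on $B_{R_n}$ tend to zero. For Lipschitz control, the Sobolev constraint with $s>3+(d+1)/2$ yields, via the uniform local embedding of Step~1 of Proposition~\ref{prop:3.2} (now at order $3$), $\|v_n\|_{C^3(\overline{Q_n})}\le C\lambda$. Lemma~\ref{lem:2.F}\,(iii), applied on the convex box $Q_n$, then gives $[r(v_n)]_{C^{0,1}}\le C(1+\lambda)$. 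Together with (A2) it gives a Lipschitz bound $C(1+\lambda)$ for $v_n(T,\cdot)-g$. Lemma~\ref{lem:2.1} with $\alpha=1$, whose constants are uniform in $n$, then gives $\eta_n^{\rm res}\vee\eta_n^{\rm T}\to0$ on the good event. These sup norms are over closed sets, so continuity is used.

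\emph{Step 3: tail.} By Lemma~\ref{lem:4.nn}, $v_n\in C^3_b$ with global $C^{1,2}$-norm at most $\Lambda_n=C(W_nB_n)^{2L+1}$. Theorem~\ref{thm:tail} with $\lambda_n=\Lambda_n$ gives $\sup_Q|v-v_n|\le(1+T)\eta_n+C\Lambda_ne^{-R_n^2/(8dC_1^2T)}$ for $n\ge n_0$. The last term tends to zero deterministically by the second part of \eqref{eq:3.4.scal}.

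\emph{Conclusion and main obstacle.} Convergence in probability follows. Given $\epsilon,\delta>0$, the bound above is $<\epsilon$ on an event of probability at least $1-\delta$ for all large $n$, so $\limsup_n\mathbb P(\sup_Q|v_n-v|\ge\epsilon)\le\delta$. The main care point is Step~1. The generalization bound must be applied simultaneously to the random minimizer $\hat\theta_n$ and to $\theta^*_n$, which is handled by the uniform supremum in Lemma~\ref{lem:2.3}. One must also check that the envelope $\Gamma_n$ and the $C^3$ control depend only on the local $H^s(Q_n)$ constraint, uniformly in $n$, while the global bound needed by Theorem~\ref{thm:tail} comes separately from Lemma~\ref{lem:4.nn}.
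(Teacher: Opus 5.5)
Your proposal is correct and follows essentially the same route as the paper: feasibility of $\theta_n^*$ together with the uniform generalization gap from Lemma~\ref{lem:2.3} and Proposition~\ref{prop:3.2} gives $R_n^d\mathcal L_n(\hat\theta_n)\to0$ in probability; the uniform $C^3$ control from the local Sobolev constraint feeds Lemma~\ref{lem:2.1}; and Theorem~\ref{thm:tail} is applied with the global bound $C(W_nB_n)^{2L+1}$ from Lemma~\ref{lem:4.nn}. One small point to tighten: absorbing the $\sqrt{\log(2/\delta)/m_n}$ term needs not only $W_n\ge1$ but also that $\log(m_n(1+R_n)W_nB_n)\ge\log m_n$ is bounded below, as the paper notes.
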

\begin{proof}
Step 1. Since $s>3+(d+1)/2$, the fixed-cube argument in the proof of
Proposition~\ref{prop:3.2} gives a uniform embedding
$H^s(Q_n)\hookrightarrow C^3(\overline{Q_n})$. The constraint in \eqref{eq:Phatn} therefore gives
$\|v_n\|_{C^3(\overline{Q_n})}\le C\lambda$. Lemma~\ref{lem:2.F}\,(iii), applied on the convex set
$Q_n$, and \textnormal{(A2)} then give
\[
 [r(v_n)]_{C^{0,1}(\overline{Q_n})}
 +[v_n(T,\cdot)-g]_{C^{0,1}(\overline{B_{R_n}})}\le C,
\]
where $C$ is independent of $n$.

Step 2. For large $n$, Proposition~\ref{prop:3.1} makes $v_{\theta_n^*}$ feasible because
$\|v_{\theta_n^*}\|_{H^s(Q_n)}
 \le\|v\|_{H^s([0,T]\times\mathbb R^d)}+\delta_n<\lambda$.
Let $\mathcal F_n$ be the constrained network class with Sobolev radius $\lambda$, and set
\[
 G_n:=\sup_{v_\theta\in\mathcal F_n}
       |\mathcal L_n(\theta)-\hat{\mathcal L}_n(\theta)|.
\]
Since $\hat\theta_n$ is $\varepsilon_n$-optimal for the empirical loss, the definition of $G_n$ and
Corollary~\ref{cor:3.1} give
\[
 \mathcal L_n(\hat\theta_n)
 \le\hat{\mathcal L}_n(\hat\theta_n)+G_n
 \le\hat{\mathcal L}_n(\theta_n^*)+\varepsilon_n+G_n
 \le \mathcal L_n(\theta_n^*)+2G_n+\varepsilon_n
 \le C\delta_n^2+2G_n+\varepsilon_n.
\]
Since $\lambda$ is fixed, Proposition~\ref{prop:3.2} gives the envelope
$\Gamma_n=C(1+\lambda)=O(1)$ required in Lemma~\ref{lem:2.3}. Apply Lemma~\ref{lem:2.3}
and Proposition~\ref{prop:3.2} with $M=M_n$ and $M=M'_n$. Since
$x\mapsto\log(a_nx)/x$, with $a_n:=(1+R_n)W_nB_n$, is decreasing when $a_nx\ge e$,
both resulting bounds can be written in terms of
$m_n=M_n\wedge M'_n$. Thus, for any fixed $\rho\in(0,1)$, with probability at least $1-\rho$,
\[
 G_n\le C W_n\sqrt{\frac{\log(m_n(1+R_n)W_nB_n)}{m_n}}
 +C\sqrt{\frac{\log(2/\rho)}{m_n}}.
\]
For fixed $\rho$, the last term is absorbed into the first because $W_n\ge d+1$ and
$\log(m_n(1+R_n)W_nB_n)\ge\log m_n$. Since $\rho$ is arbitrary,
$R_n^d\mathcal L_n(\hat\theta_n)\to0$ in probability by \eqref{eq:3.4.scal}.

Step 3. Since $\mu_n$ and $\mu_n^{\rm T}$ are normalized Lebesgue measures and
$|Q_n|,|B_{R_n}|\le CR_n^d$,
\[
 \|r(v_n)\|_{L^2(Q_n)}^2\le CR_n^d\mathcal L_n(\hat\theta_n),\qquad
 \|v_n(T,\cdot)-g\|_{L^2(B_{R_n})}^2\le CR_n^d\mathcal L_n(\hat\theta_n).
\]
Lemma~\ref{lem:2.1} with $\alpha=1$, applied in dimensions $d+1$ and $d$, respectively, and the
Lipschitz bounds of Step~1 yield
\[
\begin{aligned}
 \sup_{Q_n}|r(v_n)|
 &\le C\bigl(R_n^d\mathcal L_n(\hat\theta_n)\bigr)^{1/(d+3)}
      +C\bigl(R_n^d\mathcal L_n(\hat\theta_n)\bigr)^{1/2},\\
 \sup_{B_{R_n}}|v_n(T,\cdot)-g|
 &\le C\bigl(R_n^d\mathcal L_n(\hat\theta_n)\bigr)^{1/(d+2)}
      +C\bigl(R_n^d\mathcal L_n(\hat\theta_n)\bigr)^{1/2}.
\end{aligned}
\]
It follows that
$\eta_n:=\sup_{Q_n}|r(v_n)|+\sup_{B_{R_n}}|v_n(T,\cdot)-g|\to0$ in probability.

Step 4. Under \textnormal{(A1)} and \textnormal{(A2)}, the classical solution $v$ coincides with the unique
bounded continuous viscosity solution in Theorem~\ref{thm:tail}. Moreover, Lemma~\ref{lem:4.nn} gives
$\|v_n\|_{C^{1,2}([0,T]\times\mathbb R^d)}\le C(W_nB_n)^{2L+1}$.
Theorem~\ref{thm:tail}, the convergence of $\eta_n$ and the second condition in
\eqref{eq:3.4.scal} now prove the claim.
\end{proof}

\subsection{Quantitative error bound}\label{sec:4.5}

The following theorem makes the preceding convergence result quantitative by separating the contributing error terms.
\begin{thm}\label{thm:3.2}
Suppose that \textnormal{(S)}, \textnormal{(A1)} and \textnormal{(A2)} hold and that \eqref{eq:1.1}
admits a classical solution $v$ satisfying
$v\in H^{s}([0,T]\times\mathbb{R}^d)$ for some integer $s>3+(d+1)/2$.
Let $\delta_n>0$, let $W_n\ge d+1$, $B_n\ge1$ and $\theta^*_n\in\Theta_{L,W_n,B_n}$ satisfy
\eqref{eq:3.2.approx}, and let $R_n\ge 1$, $R_n\to\infty$, $M_n,M'_n\ge2$ and
$\lambda_n\ge\|v\|_{H^s([0,T]\times\mathbb{R}^d)}+\delta_n$.
For each $n$, let $v_n:=v_{\hat\theta_n}$ be $\varepsilon_n$-optimal solutions of \eqref{eq:Phatn}, and put
$m_n:=M_n\wedge M'_n$.
For every compact $Q\subset[0,T]\times\mathbb{R}^d$ there are $C_Q>0$ and $n_0\in\mathbb{N}$,
independent of $n$ and $\rho$, such that, for every $n\ge n_0$ and $\rho\in(0,1)$, with probability
at least $1-\rho$,
\begin{equation}
\label{eq:3.5.HJBrate}
\begin{aligned}
\sup_{(t,x)\in Q}|v(t,x)-v_n(t,x)|\le C_Q\Bigl[&
 R_n^{d/(d+3)}(1+\lambda_n)^{(d+1)/(d+3)}\bigl(\mathcal L^{*}_n(\rho)\bigr)^{1/(d+3)}\\
&+R_n^{d/2}\bigl(\mathcal L^{*}_n(\rho)\bigr)^{1/2}
 +R_n^{d/(d+2)}(1+\lambda_n)^{d/(d+2)}\bigl(\mathcal L^{*}_n(\rho)\bigr)^{1/(d+2)}\\
&+(W_nB_n)^{2L+1}e^{-R_n^2/(8dC_1^2T)}\Bigr],
\end{aligned}
\end{equation}
where
\begin{equation}
\label{eq:3.5.Lstar}
 \mathcal L^{*}_n(\rho):= \delta_n^2
   +(1+\lambda_n)^2W_n\sqrt{\frac{\log\bigl(m_n(1+R_n)W_nB_n\bigr)}{m_n}}
   +(1+\lambda_n)^2\sqrt{\frac{\log(2/\rho)}{m_n}}+\varepsilon_n .
\end{equation}
\end{thm}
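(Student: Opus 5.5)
The plan is to repeat the four steps of the proof of Theorem~\ref{thm:3.1}, now keeping every constant explicit in $\lambda_n$, $W_n$, $B_n$, $R_n$, $m_n$ and $\rho$. Throughout, $C$ denotes a constant that depends only on the fixed data and on the uniform Sobolev embedding constants.

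\emph{Regularity of the candidate.} By the fixed-cube embedding in the proof of Proposition~\ref{prop:3.2}, the constraint $\|v_n\|_{H^s(Q_n)}\le\lambda_n$ gives $\|v_n\|_{C^3(\overline{Q_n})}\le C\lambda_n$. Lemma~\ref{lem:2.F}\,(iii) on the convex set $Q_n$, together with \textnormal{(A2)}, then gives Lipschitz constants at most $C(1+\lambda_n)$ for $r(v_n)$ on $\overline{Q_n}$ and for $v_n(T,\cdot)-g$ on $\overline{B_{R_n}}$.

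\emph{Population loss.} The comparator $v_{\theta_n^*}$ is feasible, since $\lambda_n\ge\|v\|_{H^s}+\delta_n$ and \eqref{eq:3.2.approx} hold. The chain from Step~2 of Theorem~\ref{thm:3.1} then gives
\[
\mathcal L_n(\hat\theta_n)\le C\delta_n^2+2G_n+\varepsilon_n .
\]
Here Corollary~\ref{cor:3.1} supplies the $C\delta_n^2$ term, and its constant does not depend on $\lambda_n$. To bound $G_n$, apply Lemma~\ref{lem:2.3} with $\delta=\rho$, using the envelope $\Gamma_n=C(1+\lambda_n)$ from \eqref{eq:3.3.env}, and apply Proposition~\ref{prop:3.2} with $M=M_n$ and $M=M_n'$. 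Since $\log(a_nx)/x$ is decreasing (for large $n$, because $a_n\ge1+R_n\to\infty$), both bounds can be written in terms of $m_n$. This gives, with probability at least $1-\rho$,
\[
G_n\le C(1+\lambda_n)^2W_n\sqrt{\log(m_n(1+R_n)W_nB_n)/m_n}+C(1+\lambda_n)^2\sqrt{\log(2/\rho)/m_n}.
\]
On that event, $\mathcal L_n(\hat\theta_n)\le C\mathcal L_n^*(\rho)$. It is important not to absorb the $\rho$-term as was done in Theorem~\ref{thm:3.1}, so that the constant stays independent of $\rho$.

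\emph{Pointwise residuals.} As in Step~3 of Theorem~\ref{thm:3.1}, the unnormalized $L^2$ norms are at most $CR_n^d\mathcal L_n^*(\rho)$. Lemma~\ref{lem:2.1} with $\alpha=1$, using the Lipschitz constants $C(1+\lambda_n)$ from the first step, gives
\[
\eta_n^{\rm res}\le C(R_n^d\mathcal L^*)^{1/(d+3)}(1+\lambda_n)^{(d+1)/(d+3)}+C(R_n^d\mathcal L^*)^{1/2},
\]
and the analogous bound for $\eta_n^{\rm T}$ with exponent $1/(d+2)$ and factor $(1+\lambda_n)^{d/(d+2)}$.

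\emph{Conclusion.} Finally, apply Theorem~\ref{thm:tail} with global $C^{1,2}$ bound $C(W_nB_n)^{2L+1}$ from Lemma~\ref{lem:4.nn}. Its $n_0$ depends only on $Q$, and $v$ coincides with the viscosity solution. Summing the terms, the two $R_n^{d/2}(\mathcal L^*)^{1/2}$ contributions merge into one, and the result is \eqref{eq:3.5.HJBrate}.

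\emph{Main obstacle.} The main obstacle is bookkeeping uniformity. The embedding constants, the monotonicity step for $\log(a_nx)/x$, and the threshold $n_0$ must all be independent of $\rho$ and $\lambda_n$. I would verify that the $n_0$ required by the fixed-cube embedding and by Theorem~\ref{thm:tail} depends only on $R_n$ and $Q$, taking the maximum of the two.
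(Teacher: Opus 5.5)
Your proposal is correct and follows the paper's proof essentially step for step. The Sobolev constraint gives Lipschitz bounds $C(1+\lambda_n)$. Comparing with the feasible $v_{\theta_n^*}$, together with Lemma~\ref{lem:2.3} and Proposition~\ref{prop:3.2} for the envelope $\Gamma_n=C(1+\lambda_n)$, yields $\mathcal L_n(\hat\theta_n)\le C\mathcal L_n^*(\rho)$ with the concentration term kept explicit. Lemma~\ref{lem:2.1} turns this into bounds on $\eta_n$, and Theorem~\ref{thm:tail} with the global bound of Lemma~\ref{lem:4.nn} concludes. One minor simplification: the monotonicity of $x\mapsto\log(a_nx)/x$ holds for every $n$, since $a_nm_n\ge 8>e$, so it needs no enlargement of $n_0$.
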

\begin{proof}
Step 1. The regularity argument in Step~1 of the proof of Theorem~\ref{thm:3.1}, with $\lambda$ replaced
by $\lambda_n$, gives
$[r(v_n)]_{C^{0,1}(\overline{Q_n})}+[v_n(T,\cdot)-g]_{C^{0,1}(\overline{B_{R_n}})}\le C(1+\lambda_n)$.

Step 2. The assumption on $\lambda_n$ and \eqref{eq:3.2.approx} make $v_{\theta_n^*}$ feasible.
The comparison and generalization arguments in Step~2 of the proof of Theorem~\ref{thm:3.1}, with the
envelope $\Gamma_n=C(1+\lambda_n)$, then give, with probability at least $1-\rho$,
\[
 \mathcal L_n(\hat\theta_n)\le C\mathcal L^{*}_n(\rho).
\]

Step 3. The $L^2$-to-$L^\infty$ argument in Step~3 of the proof of Theorem~\ref{thm:3.1}, now keeping
the dependence on $\lambda_n$ and $\mathcal L^{*}_n(\rho)$, gives on the same event
\begin{equation}
\label{eq:3.5.eta}
\begin{aligned}
 \eta_n:=\sup_{Q_n}|r(v_n)|+\sup_{B_{R_n}}|v_n(T,\cdot)-g|
 &\le C R_n^{d/(d+3)}(1+\lambda_n)^{(d+1)/(d+3)}(\mathcal L^{*}_n(\rho))^{1/(d+3)}\\
 &\quad+C R_n^{d/2}(\mathcal L^{*}_n(\rho))^{1/2}
 +C R_n^{d/(d+2)}(1+\lambda_n)^{d/(d+2)}(\mathcal L^{*}_n(\rho))^{1/(d+2)}.
\end{aligned}
\end{equation}

Step 4. The truncation argument in Step~4 of the proof of Theorem~\ref{thm:3.1}, with
$\|v_n\|_{C^{1,2}([0,T]\times\mathbb{R}^d)}\le C(W_nB_n)^{2L+1}$ from Lemma~\ref{lem:4.nn},
combines \eqref{eq:3.5.eta} with Theorem~\ref{thm:tail} and gives \eqref{eq:3.5.HJBrate}.
\end{proof}

\begin{rem}\label{rem:3.5.balance}
For every admissible choice of the parameters, estimate \eqref{eq:3.5.HJBrate} holds with the stated
probability. The four terms in \eqref{eq:3.5.Lstar} are, respectively, the approximation,
generalization, concentration and optimization errors. For any fixed $\rho\in(0,1)$, the right-hand
side of \eqref{eq:3.5.HJBrate} tends to zero if
\begin{equation}
\label{eq:3.5.balance}
 (1+\lambda_n)^{d+1}R_n^d\,\mathcal L^{*}_n(\rho)\to 0
 \qquad\text{and}\qquad
 (W_nB_n)^{2L+1}e^{-R_n^2/(8dC_1^2T)}\to 0,
\end{equation}
These conditions are the quantitative counterpart of \eqref{eq:3.4.scal}. Explicit rates follow once
an approximation rate is available. Suppose that $\delta_n\le CR_n^pW_n^{-\kappa}$ and
$B_n\le CR_n^pW_n^p$ for some $\kappa,p>0$, and take $\lambda_n=\lambda$ fixed, $M'_n=M_n$,
$\varepsilon_n\le M_n^{-\kappa/(2\kappa+1)}$,
$W_n\simeq M_n^{1/(2(2\kappa+1))}$ and $R_n=A(\log M_n)^{1/2}$ with $A$ large. Then both conditions in
\eqref{eq:3.5.balance} hold, and \eqref{eq:3.5.HJBrate} gives
$\sup_Q|v-v_n|=O_{\mathbb P}(M_n^{-\beta})$ up to logarithmic factors, where
$\beta=\kappa/((2\kappa+1)(d+3))$. For
$\tanh$ with $L=2$ and $v\in W^{s+k,\infty}$, Remark~\ref{rem:3.2.rate}\,(b) gives $\kappa=k/(d+1)$.
The factors that weaken \eqref{eq:3.5.HJBrate} have distinct origins. The powers of $R_n$ arise from
the volumes of the truncated domains, the powers of $1+\lambda_n$ arise from the
$L^2$-to-$L^{\infty}$ conversion, and the exponential term accounts for truncation of the unbounded
state space.
\end{rem}

\section{Numerical experiments}\label{sec:5}

We illustrate the kernel and network realizations developed in Sections~\ref{sec:3}
and~\ref{sec:4} on a manufactured test problem, with emphasis on refinement, sampling and
regularization effects.

\subsection{Test problem}\label{sec:5.1}

We use an equation of the form \eqref{eq:2.HJB} with a manufactured source. Let $T=1$,
$A=[\sigma_{\min},\sigma_{\max}]\times[-\bar b,\bar b]$ with
$(\sigma_{\min},\sigma_{\max},\bar b)=(0.1,0.4,1)$, and for $a=(\varsigma,\nu)\in A$ let
$\sigma(t,x,a)=\varsigma I_d$, $b(t,x,a)=\nu\,\mathbf 1/d$ with $\mathbf 1=(1,\ldots,1)$, and $f(t,x,a)=f(t,x)$. Then
$\inf_{a\in A}H^a(t,x,p,X)=\tfrac12\sigma_{\min}^2(\mathrm{tr}X)^+-\tfrac12\sigma_{\max}^2(\mathrm{tr}X)^-
-\tfrac{\bar b}{d}\bigl|\sum_ip_i\bigr|+f(t,x)$, and \eqref{eq:1.1} reads
\begin{equation}
\label{eq:4.testHJB}
\left\{
\begin{aligned}
 &\partial_t v+\tfrac12\sigma_{\min}^2(\Delta v)^+-\tfrac12\sigma_{\max}^2(\Delta v)^-
   -\tfrac{\bar b}{d}\Bigl|\textstyle\sum_{i=1}^d\partial_{x_i}v\Bigr|+f=0,\quad (t,x)\in[0,1)\times\mathbb{R}^d, \\
 & v(1,x)=g(x),\quad x\in\mathbb{R}^d.
\end{aligned}
\right.
\end{equation}
We choose $f$ and $g$ so that the solution is
\[
 v(t,x)=\sin\Bigl(t+\textstyle\sum_{i=1}^d x_i\Bigr)\,e^{-|x|^2/\ell_0^2},\qquad \ell_0=2,
\]
a spatially localized version of the benchmark solution $\sin(t+\sum_ix_i)$ of Guo et
al.~\cite{guo-etal:2015}: $g=v(1,\cdot)$ and $f$ is obtained by inserting $v$ into
\eqref{eq:4.testHJB}. All hypotheses of Sections~\ref{sec:3} and~\ref{sec:4} hold: $A$ is compact,
$\sigma$ and $b$ are constant, $f$ and $g$ are bounded and Lipschitz,
$\sigma\sigma^{\mathsf T}\ge\sigma_{\min}^2I_d$, and, owing
to the Gaussian factor, $v\in H^{s}([0,T]\times\mathbb{R}^d)$ for all $s$ and $v$ extends to an
element of $H^{\tau}(\mathbb{R}^{d+1})$. The nonlinearity is active: both $\Delta v$ and
$\sum_i\partial_{x_i}v$ change sign inside the core $[0,1]\times[-1,1]^d$ used below, so the optimal
diffusion level and drift switch there. On the boundary of the truncation cylinders with $R=3$ used
below, $|v|\le e^{-9/4}\approx0.1$, so the artificial boundary is not negligible. The residual used by
both realizations is the left-hand side of \eqref{eq:4.testHJB} with $v$ replaced by $\varphi$.

\subsection{Kernel realization: implementation and results}\label{sec:5.2}

\smallskip
\noindent\textbf{Choice of the kernel.}
To meet the requirement $\tau>3+(d+1)/2$ in Section~\ref{sec:3.1}, we take
$\Phi(y)=\phi(|y|)$ with the rescaled Wendland function
\begin{equation}
\label{eq:5.wendland}
 \phi(r)=\phi_{3,3}(r/c),\qquad
 \phi_{3,3}(s)=(1-s)_+^{8}\bigl(32s^3+25s^2+8s+1\bigr),
\end{equation}
parameterised by $c>0$. The function $\phi_{3,3}$ is $C^6$ and positive definite on
$\mathbb{R}^m$ for $m\le3$, and its native space on $\mathbb{R}^3$ is $H^5(\mathbb{R}^3)$
\cite[Chapter~9]{wen:2005}. Its Fourier decay gives $\tau=9/2$ on $\mathbb{R}^2$ and $\tau=5$ on
$\mathbb{R}^3$, satisfying \eqref{eq:3.tau} for $d=1$ and $d=2$.

\smallskip
\noindent\textbf{Discretization and optimization.}
The collocation set $\Gamma_n\subset\overline{Q_n}=[0,1]\times[-R,R]^d$ is a uniform tensor-product grid
with $N^t+1$ time nodes $t_k=k/N^t$ and $N^x$ spatial nodes per axis on $[-R,R]$,
giving $N=(N^t+1)(N^x)^d$ points; an additional set $\Gamma^{\mathrm{extra}}$ of $N^{\mathrm{extra}}$
Sobol' points in $[0,1-10^{-8})\times[-R,R]^d$ and corresponding terminal points are added during
training. The theoretical loss \eqref{eq:3.loss} uses only the tensor grid. In the spatial-refinement
studies, we fix $N^t=32$ for $d=1$ and $N^t=16$ for $d=2$, refine $N^x$, and report
$h_x=2R/(N^x-1)$. The full space-time fill distance is
$h_n=[(T/(2N^t))^2+d/(N^x-1)^2]^{1/2}$ and therefore has a positive floor in those studies. Their
slopes against $h_x$ are visual references only. A separate experiment below grows $R$ and refines
both $N^t$ and $N^x$ according to the scaling in Theorem~\ref{thm:ker.qual}.
In these experiments, we use $\mu=10^2$, at most $2\times10^4$ L-BFGS iterations, and
$\lambda=6$. This value exceeds all computed interpolation norms (at most $4.85$)
but does not certify the strict inequality in Theorem~\ref{thm:ker.qual}; all computed minimisers
remain below $\lambda$. CPU times are wall-clock times on an Apple~M3~Pro with multi-threaded BLAS.

\smallskip
\noindent\textbf{Error metrics.}
We evaluate the fixed core $Q^{\mathrm{core}}=[0,1]\times[-1,1]^d$ on an independent grid of
$M^{\mathrm{core}}$ points ($21\times41$ for $d=1$ and $21\times21^2$ for $d=2$), and define
\[
 E^{\mathrm{core}}_{\infty}:=\max_{\ell}|v^{(\lambda)}(\tilde y^{\ell})_n-v(\tilde y^{\ell})|,\qquad
 E^{\mathrm{core}}_{\mathrm{RMS}}:=\Bigl(\tfrac{1}{M^{\mathrm{core}}}\textstyle\sum_{\ell}|v^{(\lambda)}_n(\tilde y^{\ell})-v(\tilde y^{\ell})|^2\Bigr)^{1/2},
 \quad \tilde y^{\ell}\in Q^{\mathrm{core}};
\]
The quantities $E_{\mathrm{RMS}}$ and $E_{\infty}$ denote the corresponding errors on the full
cylinder, evaluated on $51\times101$ points for $d=1$ and $21\times41^2$ points for $d=2$.
We compute $\gamma_n$ in \eqref{eq:3.gamma} at the collocation points.

\smallskip
\noindent\textbf{Spatial-grid refinement ($d=1$).}
We fix $R=3$ and $c=1$ and refine $N^x$ from $9$ to $49$, i.e.\ $h_x$ from $0.75$ to
$0.125$. Table~\ref{tab:ker.refine_d1} and Figure~\ref{fig:ker.refine} report the outcome.

\begin{table}[tb]
  \centering
  \small
  \caption{Spatial-grid refinement of the kernel realization ($d=1$).}
  \label{tab:ker.refine_d1}
  \begin{tabular}{cccccccccc}
    \toprule
    $N^t$ & $N^x$ & $N$ & $h_x$ & $\gamma_n$ & $\|v_n\|_{\mathcal H}$ & $E_{\mathrm{RMS}}^{\mathrm{core}}$ & $E_{\infty}^{\mathrm{core}}$ & $E_{\mathrm{RMS}}$ & CPU [s] \\
    \midrule
    32 & 9 & 297 & 0.7500 & 3.733e-01 & 1.96 & 3.467e-01 & 6.352e-01 & 2.902e-01 & 0.5 \\
    32 & 13 & 429 & 0.5000 & 1.428e-01 & 2.48 & 1.492e-01 & 3.170e-01 & 1.126e-01 & 5.0 \\
    32 & 17 & 561 & 0.3750 & 4.146e-02 & 2.75 & 1.875e-02 & 3.885e-02 & 1.273e-02 & 14.0 \\
    32 & 25 & 825 & 0.2500 & 4.841e-02 & 2.78 & 1.513e-03 & 3.856e-03 & 5.677e-03 & 32.1 \\
    32 & 33 & 1089 & 0.1875 & 7.663e-02 & 2.78 & 9.977e-04 & 5.007e-03 & 4.265e-03 & 21.2 \\
    32 & 49 & 1617 & 0.1250 & 7.788e-02 & 2.78 & 7.735e-04 & 4.972e-03 & 1.574e-03 & 46.3 \\
    \bottomrule
  \end{tabular}
\end{table}

\begin{figure}[tb]
\centering
\includegraphics[width=0.98\columnwidth]{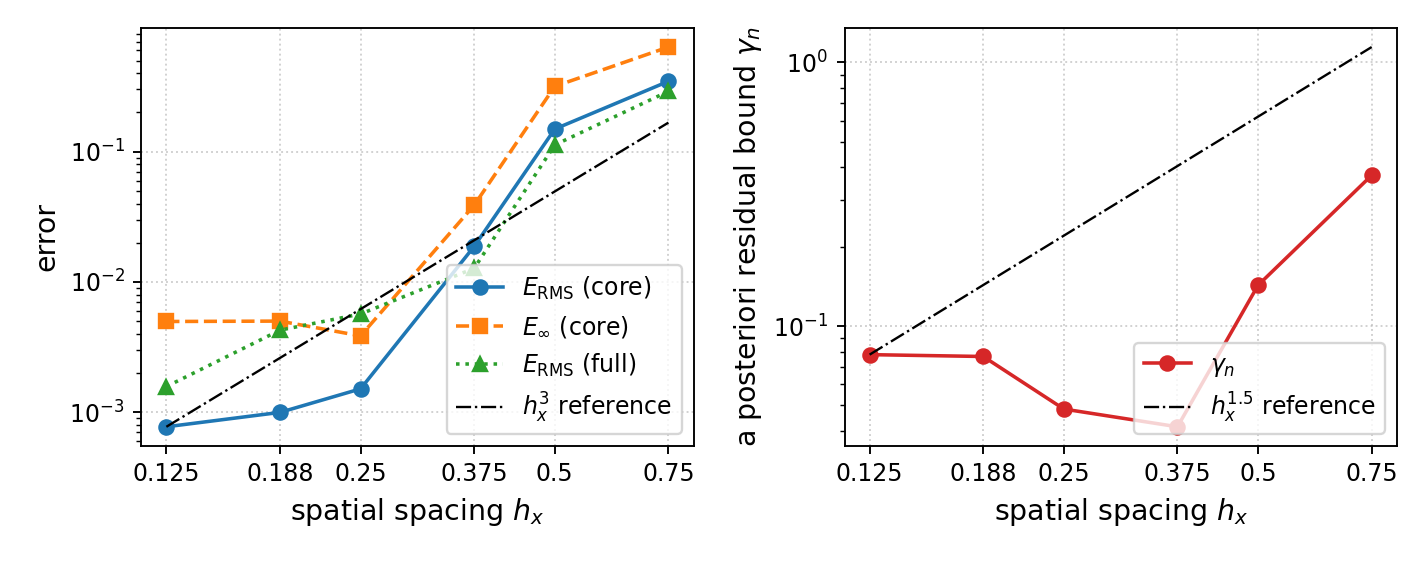}
\caption{Kernel realization, $d=1$, $R=3$, $c=1$, $\lambda=6$.
(left) Errors on the fixed compact core $Q^{\mathrm{core}}=[0,1]\times[-1,1]$ and on the full
truncation cylinder $[0,1]\times[-3,3]$, versus the spatial spacing $h_x$, with a reference slope
$h_x^3$.
(right) The a posteriori residual bound $\gamma_n$ of \eqref{eq:3.gamma}, versus $h_x$,
against an $h_x^{3/2}$ reference slope.}
\label{fig:ker.refine}
\end{figure}

The errors decrease rapidly and the core RMS error falls below $10^{-3}$ on the two finest grids.
By contrast, $\gamma_n$ reaches its smallest value at $N^x=17$ and then rises, indicating an
optimization floor.

\smallskip
\noindent\textbf{Bandwidth and error localization ($d=1$).}
The left panel of Figure~\ref{fig:ker.sens} varies the bandwidth $c$ at $(N^t,N^x)=(32,25)$, $R=3$
(Table~\ref{tab:ker.eps_d1}); the right panel shows the error profile
$x\mapsto\max_{t}|v^{(\lambda)}_n-v|(t,x)$ for two bandwidths.

\begin{table}[tb]
  \centering
  \small
  \caption{Bandwidth sensitivity of the kernel realization ($d=1$).}
  \label{tab:ker.eps_d1}
  \begin{tabular}{cccccc}
    \toprule
    $c$ & $\gamma_n$ & $\|v_n\|_{\mathcal H}$ & $E_{\mathrm{RMS}}^{\mathrm{core}}$ & $E_{\infty}^{\mathrm{core}}$ & CPU [s] \\
    \midrule
    0.5 & 7.331e-01 & 3.04 & 3.259e-01 & 6.173e-01 & 1.2 \\
    0.75 & 7.097e-02 & 3.40 & 1.541e-02 & 3.934e-02 & 14.3 \\
    1 & 4.841e-02 & 2.78 & 1.513e-03 & 3.856e-03 & 32.4 \\
    1.25 & 4.220e-02 & 2.39 & 6.111e-04 & 2.009e-03 & 30.7 \\
    1.5 & 5.825e-02 & 2.15 & 9.825e-04 & 2.816e-03 & 17.2 \\
    2 & 5.101e-02 & 1.82 & 8.268e-04 & 2.765e-03 & 17.7 \\
    2.5 & 4.232e-02 & 1.68 & 9.428e-04 & 2.508e-03 & 15.4 \\
    3 & 3.980e-02 & 1.58 & 9.999e-04 & 2.248e-03 & 23.1 \\
    4 & 4.421e-02 & 1.62 & 1.264e-03 & 3.487e-03 & 19.9 \\
    \bottomrule
  \end{tabular}
\end{table}

\begin{figure}[tb]
\centering
\begin{minipage}[b]{0.49\columnwidth}
  \centering
  \includegraphics[width=0.99\linewidth]{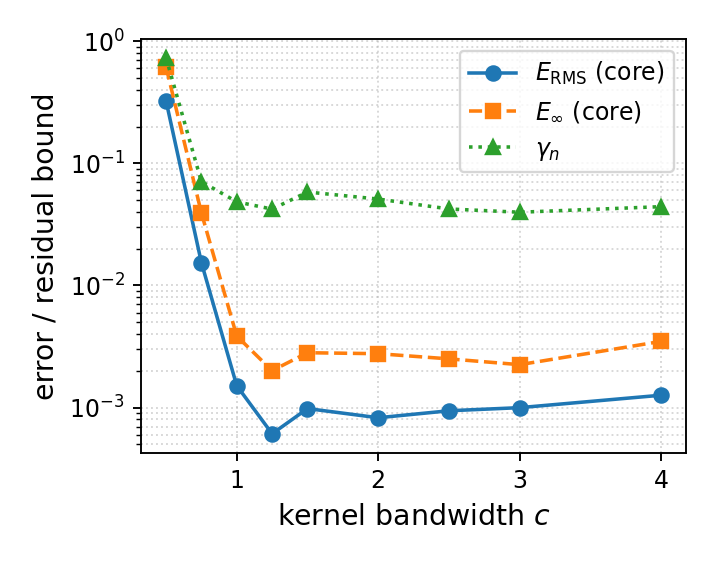}
\end{minipage}\hfill
\begin{minipage}[b]{0.49\columnwidth}
  \centering
  \includegraphics[width=0.99\linewidth]{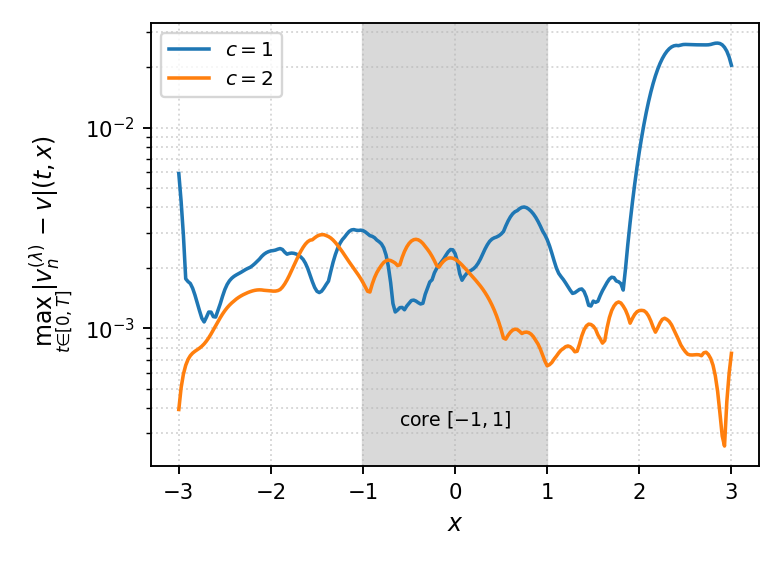}
\end{minipage}
\caption{Kernel realization, $d=1$, $(N^t,N^x)=(32,25)$, $R=3$, $\lambda=6$.
(left) Errors on the core and the a posteriori bound $\gamma_n$ versus the bandwidth
$c$.
(right) Spatial error profile $x\mapsto\max_{t\in[0,T]}|v^{(\lambda)}_n-v|(t,x)$ for
$c\in\{1,2\}$; the shaded band is the core $[-1,1]$ on which the errors of
Table~\ref{tab:ker.refine_d1} are measured.}
\label{fig:ker.sens}
\end{figure}

The fit fails at $c=0.5$ but is relatively insensitive to $c\ge1$, where optimization appears to be
the limiting factor. For $c=1$ the largest errors concentrate near the artificial boundary, whereas
$c=2$ largely removes this layer, illustrating the difference between core and full-cylinder errors.

\FloatBarrier
\smallskip
\noindent\textbf{Theorem-scaled refinement ($d=1$).}
To couple domain truncation with space-time refinement, we take
$R\in\{1.5,2,2.5,3,3.5\}$, $N^t=\lceil R^3\rceil$ and $N^x=N^t+1$, with
$c=1$, $\lambda=6$ and $N^{\mathrm{extra}}=0$. Thus
$h_n=[(1/(2N^t))^2+1/(N^x-1)^2]^{1/2}=O(R^{-3})$, so the geometric factor
$R^2h_n$ in Theorem~\ref{thm:ker.qual} is $O(R^{-1})$. Table~\ref{tab:ker.theory_d1}
shows that the core RMS error decreases from $4.1\times10^{-1}$ to $7.1\times10^{-4}$, while all
computed native space norms remain below $\lambda$. The empirical loss and $\gamma_n$ rise slightly
on the finest grid, indicating an optimization floor. This finite experiment illustrates the grid
and truncation scaling of the theorem, but does not verify the strict condition
$\lambda>\|v\|_{\mathcal H}$ or the asymptotic optimization condition \textnormal{(K2)}.

\begin{table}[!ht]
  \centering
  \small
  \caption{Theorem-scaled refinement of the kernel realization ($d=1$).}
  \label{tab:ker.theory_d1}
  \begin{tabular}{cccccccccc}
    \toprule
    $R_n$ & $N^t_n$ & $N^x_n$ & $N$ & $R_n^2h_n$ & $\widehat{\mathcal L}_n$ & $\gamma_n$ & $\|v_n\|_{\mathcal H}$ & $E_{\mathrm{RMS}}^{\mathrm{core}}$ & CPU [s] \\
    \midrule
    1.5 & 4 & 5 & 25 & 0.6289 & 7.921e-02 & 5.138e-01 & 1.35 & 4.074e-01 & 0.0 \\
    2.0 & 8 & 9 & 81 & 0.5590 & 1.878e-03 & 1.069e-01 & 2.43 & 1.513e-01 & 0.1 \\
    2.5 & 16 & 17 & 289 & 0.4367 & 2.671e-04 & 7.517e-02 & 2.75 & 6.726e-03 & 0.4 \\
    3.0 & 27 & 28 & 784 & 0.3727 & 4.763e-05 & 4.214e-02 & 2.77 & 1.020e-03 & 15.9 \\
    3.5 & 43 & 44 & 1936 & 0.3185 & 5.676e-05 & 6.298e-02 & 2.80 & 7.105e-04 & 69.6 \\
    \bottomrule
  \end{tabular}
\end{table}

\FloatBarrier

\smallskip
\noindent\textbf{Two-dimensional results.}
For $d=2$ we set $R=3$, $N^t=16$, $c=1.5$ and $\lambda=6$, and refine $N^x$ from $7$ to $15$.
Table~\ref{tab:ker.refine_d2} and Figure~\ref{fig:ker.refine.d2} report the results.

\begin{table}[tb]
  \centering
  \small
  \caption{Spatial-grid refinement of the kernel realization ($d=2$).}
  \label{tab:ker.refine_d2}
  \begin{tabular}{cccccccccc}
    \toprule
    $N^t$ & $N^x$ & $N$ & $h_x$ & $\gamma_n$ & $\|v_n\|_{\mathcal H}$ & $E_{\mathrm{RMS}}^{\mathrm{core}}$ & $E_{\infty}^{\mathrm{core}}$ & $E_{\mathrm{RMS}}$ & CPU [s] \\
    \midrule
    16 & 7 & 833 & 1.0000 & 3.129e-01 & 3.11 & 3.190e-01 & 7.054e-01 & 1.665e-01 & 2.9 \\
    16 & 9 & 1377 & 0.7500 & 1.348e-01 & 3.53 & 1.507e-01 & 4.101e-01 & 7.630e-02 & 7.8 \\
    16 & 11 & 2057 & 0.6000 & 4.446e-02 & 3.85 & 3.700e-02 & 1.029e-01 & 1.604e-02 & 33.0 \\
    16 & 13 & 2873 & 0.5000 & 4.057e-02 & 3.92 & 7.212e-03 & 2.165e-02 & 3.657e-03 & 84.6 \\
    16 & 15 & 3825 & 0.4286 & 3.383e-02 & 3.95 & 2.386e-03 & 6.930e-03 & 2.156e-03 & 268.6 \\
    \bottomrule
  \end{tabular}
\end{table}

\begin{figure}[!b]
\centering
\includegraphics[width=0.98\columnwidth]{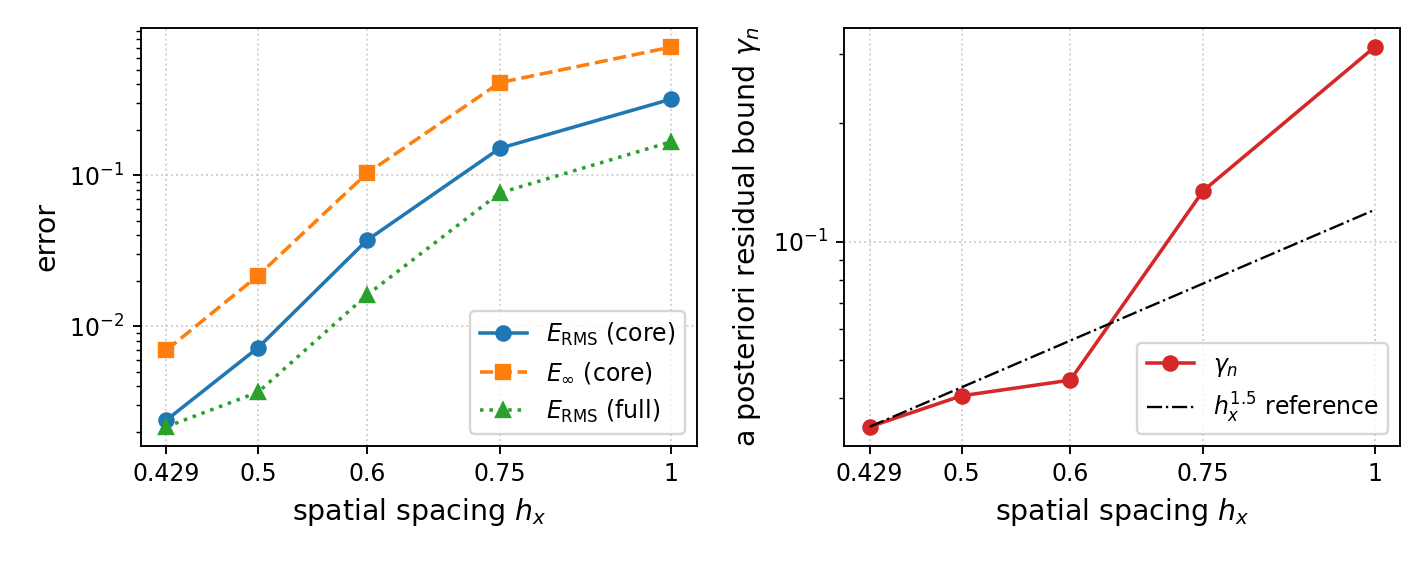}
\caption{Kernel realization, $d=2$, $R=3$, $c=1.5$, $\lambda=6$.
(left) Errors on the core $Q^{\mathrm{core}}=[0,1]\times[-1,1]^2$ and on the full truncation
cylinder, versus the spatial spacing $h_x$.
(right) The a posteriori residual bound $\gamma_n$ versus $h_x$, against an $h_x^{3/2}$
reference slope.}
\label{fig:ker.refine.d2}
\end{figure}

All errors and $\gamma_n$ decrease monotonically; the fitted core rate is pre-asymptotic on this
coarse range, while the two finest values of $\gamma_n$ roughly follow the $h_x^{3/2}$ reference.
The finest dense-kernel solve takes $269$\,s.

\smallskip
\noindent\textbf{Comparison with a monotone finite-difference baseline.}
We also ran a monotone explicit finite-difference discretisation of \eqref{eq:4.testHJB} in $d=1$
(central second differences inside the minimum over the diffusion level, upwind first differences,
and explicit Euler under $\Delta t\,(\sigma_{\max}^2/h_x^2+\bar b/h_x)\le1$). Table~\ref{tab:fd.compare_d1}
uses the same metrics as above and compares exact Dirichlet data (\emph{oracle}) with linear
extrapolation (\emph{extrap}) at $|x|=R$.

On the core the kernel realization is more accurate: $E^{\mathrm{core}}_{\mathrm{RMS}}=6.4\times10^{-4}$
against $2.8\times10^{-3}$ at the finest finite-difference setting. On the full cylinder the
corresponding errors are $1.5\times10^{-3}$ for the kernel method, $2.1\times10^{-3}$ with oracle
boundary data and $5.7\times10^{-3}$ with extrapolation. The two boundary treatments give the same
core errors to the displayed precision but differ near the boundary. The finite-difference method is
far cheaper (about $0.1$\,s versus $71$\,s), highlighting the cost of dense kernel matrices in one
dimension.

\begin{table}[tb]
  \centering
  \small
  \caption{Monotone finite-difference baseline for the $d=1$ test problem (central second differences inside the minimum over the diffusion level, upwind first differences, explicit Euler in $t$ under the CFL condition), evaluated on the same core and full grids and with the same metrics as Table~\ref{tab:ker.refine_d1}. \emph{oracle}: Dirichlet values at $|x|=R$ taken from the analytical solution; \emph{extrap}: linear extrapolation at $|x|=R$, i.e.\ no knowledge of the exact solution. The core errors coincide to the reported precision for the two treatments because the drift has speed at most $1$ and the diffusion coefficient is at most $0.08$.}
  \label{tab:fd.compare_d1}
  \begin{tabular}{llccccc}
    \toprule
    BC & $N^x$ & steps & $E_{\mathrm{RMS}}^{\mathrm{core}}$ & $E_{\infty}^{\mathrm{core}}$ & $E_{\mathrm{RMS}}$ & CPU [s] \\
    \midrule
    oracle & 16 & 5 & 7.127e-02 & 1.863e-01 & 6.116e-02 & 0.000 \\
    oracle & 32 & 12 & 3.688e-02 & 9.127e-02 & 3.019e-02 & 0.001 \\
    oracle & 64 & 33 & 1.933e-02 & 4.692e-02 & 1.543e-02 & 0.003 \\
    oracle & 128 & 105 & 1.025e-02 & 2.436e-02 & 8.035e-03 & 0.008 \\
    oracle & 256 & 370 & 5.352e-03 & 1.256e-02 & 4.153e-03 & 0.037 \\
    oracle & 512 & 1386 & 2.751e-03 & 6.441e-03 & 2.121e-03 & 0.157 \\
    \midrule
    extrap & 16 & 5 & 7.127e-02 & 1.863e-01 & 7.902e-02 & 0.000 \\
    extrap & 32 & 12 & 3.688e-02 & 9.127e-02 & 3.916e-02 & 0.001 \\
    extrap & 64 & 33 & 1.933e-02 & 4.692e-02 & 1.984e-02 & 0.002 \\
    extrap & 128 & 105 & 1.025e-02 & 2.436e-02 & 1.119e-02 & 0.006 \\
    extrap & 256 & 370 & 5.352e-03 & 1.256e-02 & 7.314e-03 & 0.023 \\
    extrap & 512 & 1386 & 2.751e-03 & 6.441e-03 & 5.658e-03 & 0.098 \\
    \bottomrule
  \end{tabular}
\end{table}

\FloatBarrier

\subsection{Neural-network realization: architecture and Sobolev-regularized loss}\label{sec:5.3}

We approximate $v$ by a feedforward $\tanh$ network $v_{\theta}$ as in \eqref{eq:3.1.NN} with $L$
hidden layers of width $W$. Unlike the kernel candidates reported above, which all satisfy the
native space constraint $\theta^{\mathsf T}K\theta\le\lambda^2$ when checked after optimization,
the network candidates are not trained under the hard weight bound or Sobolev-norm constraint used
in Section~\ref{sec:4}; the penalty below is only an experimental regularizer.
Collocation points $\{\tilde y^{\ell}\}_{\ell=1}^M$ are drawn uniformly from $[0,1]\times[-R,R]$,
and terminal points $\{\tilde x^{\ell}\}_{\ell=1}^{M'}$ are drawn uniformly from $[-R,R]$.

\smallskip
\noindent\textbf{Sobolev surrogate.}
Since $\|v_{\theta}\|_{H^s(Q_n)}$ is too costly to evaluate during training, we use the product of
layer norms
\begin{equation}
\label{eq:4.pathnorm}
 \mathcal P(\theta):=\prod_{\ell=1}^{L+1}\|A^{(\ell)}\|_{F},
\end{equation}
where $\|\cdot\|_F$ is the Frobenius norm; products of layer norms of this kind appear in
norm-based capacity bounds \cite{bar-fos-tel:2017,gol-rak-sha:2018}.
Since $\|\varrho'\|_{L^{\infty}}\le1$, $\mathcal P(\theta)$ bounds the Lipschitz constant of $v_{\theta}$.
It does not by itself bound higher derivatives or $\|v_{\theta}\|_{H^s(Q_n)}$: already the one-hidden-layer
network $u_w(x)=w^{-1}\tanh(wx)$, with parameter vector $\theta_w$, satisfies
$\mathcal P(\theta_w)=1$ while its second derivative grows with $w$.
We use $\mathcal P(\theta)$ only as a computable experimental proxy; it does not enforce the constraint in
\eqref{eq:Phatn}, whose left-hand side is measured after training in Section~\ref{sec:5.4}.

\smallskip
\noindent\textbf{Loss.}
We minimize the augmented loss
\begin{equation}
\label{eq:4.loss}
 \mathcal J_n(\theta):=\frac{1}{M}\sum_{\ell=1}^M|r(v_{\theta})(\tilde y^{\ell})|^2
   + \frac{1}{M'}\sum_{\ell=1}^{M'}|v_{\theta}(1,\tilde x^{\ell})-g(\tilde x^{\ell})|^2
   + \beta\,\bigl(\mathcal P(\theta)\bigr)^2,
\end{equation}
where $\beta\ge0$ is an experimental regularization weight. Since the penalty differs from the
Sobolev constraint in \eqref{eq:Phatn}, the convergence theorems of Section~\ref{sec:4} do not
apply directly to these training runs.

\smallskip
\noindent\textbf{Training.}
Derivatives $\partial_t v_{\theta}, \partial_x v_{\theta}, \partial^2_x v_{\theta}$ are computed by
automatic differentiation. The loss \eqref{eq:4.loss} is minimized by Adam~\cite{kin-ba:2015} with learning rate
$10^{-3}$ for $5\times 10^3$ iterations, a fresh batch of collocation and terminal points being
drawn at every step.
Every $10^3$ iterations the RMS error against the exact solution is evaluated on the same held-out set
of $2\times 10^3$ uniform samples, and the iterate with the lowest validation error is retained. This oracle choice, which uses the exact solution, separates the optimization error from the other
error sources.

In all experiments below we set $T=1$, $R=3$, $L=3$, and $M'=\max(M/4,256)$,
and vary the network width $W$, the residual sample size $M$, and the regularization weight $\beta$.

\FloatBarrier

\subsection{Neural-network realization: numerical results}\label{sec:5.4}

The retained iterate is evaluated once on deterministic grids.
On the uniform $51\times 101$ grid $\{z^{j}\}_{j=1}^{M_{\mathrm{eval}}}$ of
$[0,1]\times[-R,R]$, where $M_{\mathrm{eval}}=51\cdot101$, we set
\begin{align*}
 E_{\infty} &:= \max_{1\le j\le M_{\mathrm{eval}}}|v_{\theta}(z^{j})-v(z^{j})|,\qquad
 E_{\mathrm{RMS}} := \Bigl(\tfrac{1}{M_{\mathrm{eval}}}\textstyle\sum_{j=1}^{M_{\mathrm{eval}}}
 |v_{\theta}(z^{j})-v(z^{j})|^2\Bigr)^{1/2},\\
 \mathrm{Res}_{\infty} &:= \max_{1\le j\le M_{\mathrm{eval}}}|r(v_{\theta})(z^{j})|,\qquad
 E_{T,\infty} := \max_{1\le j\le M_{T,\mathrm{eval}}}|v_{\theta}(1,x_T^j)-g(x_T^j)|,
\end{align*}
where $M_{T,\mathrm{eval}}=10^3$ and
$x_T^1,\ldots,x_T^{M_{T,\mathrm{eval}}}$ are equispaced points in $[-R,R]$.
As in Section~\ref{sec:5.2} we also report $E^{\mathrm{core}}_{\mathrm{RMS}}$ and
$E^{\mathrm{core}}_{\infty}$, computed on the same fixed compact core
$Q^{\mathrm{core}}=[0,1]\times[-1,1]$ and the same $21\times 41$ grid, so that the two realizations
are compared on identical sets.
We additionally track $\mathcal P(\theta)$ and the wall-clock training time.

Each setting is trained as described in Section~\ref{sec:5.3}.
Table~\ref{tab:4.main} reports the resulting errors and CPU times for the $(W,M)$ sweep
with $\beta=10^{-6}$ fixed.

\begin{table}[htbp]
\centering
\small
\setlength{\tabcolsep}{4pt}
\caption{Errors and computational cost as a function of width $W$ and sample size $M$, with $\beta=10^{-6}$ fixed.}
\label{tab:4.main}
\begin{tabular}{cccccccccc}
\toprule
$W$ & $M$ & $E_{\mathrm{RMS}}^{\mathrm{core}}$ & $E_{\infty}^{\mathrm{core}}$ & $E_{\mathrm{RMS}}$ & $E_{\infty}$ & $E_{T,\infty}$ & $\mathrm{Res}_{\infty}$ & $\mathcal P(\theta)$ & CPU [s] \\
\midrule
16 & 1024 & 4.393e-03 & 9.493e-03 & 2.956e-03 & 9.490e-03 & 9.402e-03 & 4.378e-02 & 1.99e+01 & 17.7 \\
16 & 4096 & 4.403e-03 & 9.648e-03 & 3.010e-03 & 9.686e-03 & 9.680e-03 & 3.735e-02 & 1.93e+01 & 35.1 \\
32 & 1024 & 2.837e-03 & 8.286e-03 & 3.971e-03 & 2.735e-02 & 9.278e-03 & 4.115e-02 & 1.43e+01 & 24.2 \\
32 & 4096 & 2.132e-03 & 6.792e-03 & 2.659e-03 & 1.586e-02 & 4.983e-03 & 3.615e-02 & 1.25e+01 & 52.0 \\
64 & 1024 & 9.675e-04 & 3.715e-03 & 1.160e-03 & 1.080e-02 & 3.725e-03 & 3.000e-02 & 1.27e+01 & 40.3 \\
64 & 4096 & 8.853e-04 & 2.897e-03 & 1.511e-03 & 1.250e-02 & 4.509e-03 & 3.114e-02 & 1.22e+01 & 104.3 \\
\bottomrule
\end{tabular}
\end{table}

\begin{figure}[tbp]
\centering
\begin{minipage}[b]{0.49\columnwidth}
  \centering
  \includegraphics[width=0.95\linewidth]{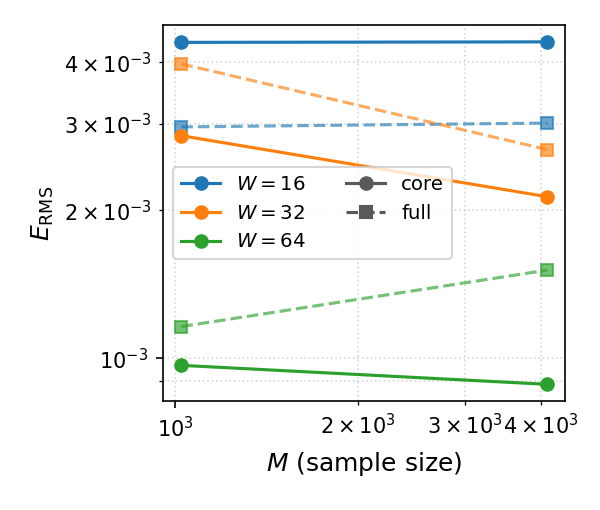}
\end{minipage}\hfill
\begin{minipage}[b]{0.49\columnwidth}
  \centering
  \includegraphics[width=0.95\linewidth]{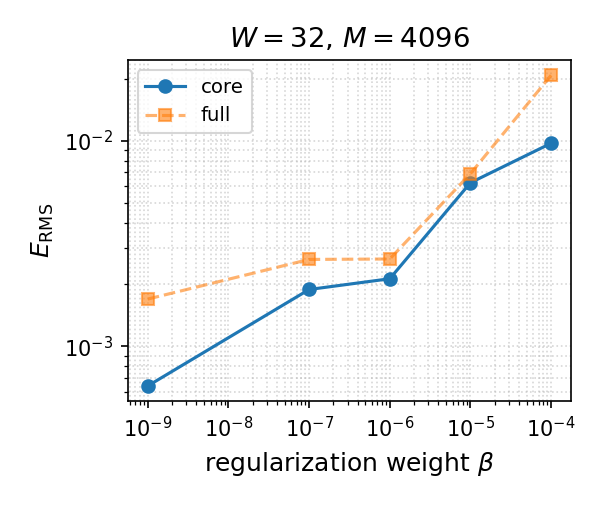}
\end{minipage}
\caption{(left) $E_{\mathrm{RMS}}$ on the core and on the full cylinder versus the sample
size $M$, for $W\in\{16,32,64\}$ and $\beta=10^{-6}$.
(right) The same two errors versus the regularization weight $\beta$ for $(W,M)=(32,4096)$;
the value $\beta=0$ is plotted at $10^{-9}$ on the logarithmic axis.}
\label{fig:4.errors}
\end{figure}

\smallskip
\noindent\textbf{Discussion.}
The error decreases with width, whereas changing $M$ has little systematic effect at this training
budget. Increasing $\beta$ worsens both errors, so for this smooth target the experimental penalty
does not improve accuracy.

Post-training quadrature of the retained checkpoints gives $H^5$-norms between $25.1$ and $37.7$
for the width sweep, so each computed network is feasible for an a posteriori radius at least as
large as its measured norm. These checks do not establish the
training constraint or the approximation hypothesis of Theorem~\ref{thm:3.2}. Moreover, increasing
$\beta$ lowers $\mathcal P(\theta)$ but can increase the measured $H^5$-norm, confirming that the
penalty is not a Sobolev constraint.

On the common core the kernel has slightly smaller RMS error than the best regularized network, while
the network has slightly smaller maximum error; their full-cylinder RMS errors are comparable. This
comparison favors the networks through the oracle iterate selection of Section~\ref{sec:5.3}.

These experiments illustrate, rather than validate, the theory: they use one manufactured equation,
$d\le2$, fixed time grids and short training budgets, and the PINN loss does not enforce the theoretical
Sobolev constraint. Higher-dimensional experiments and broader baselines are left for future work.

\FloatBarrier

\subsection*{Acknowledgements}

This study is supported by JSPS KAKENHI Grant Number JP24K06861.

\subsection*{Code availability}

The code, numerical data, and scripts used to generate the figures and tables
in Section~\ref{sec:5} are publicly available at
\url{https://github.com/yumiharu-nakano/hjb-residual-methods}.

\subsection*{AI disclosure statement}

During the preparation of this manuscript, the author used OpenAI Codex
(accessed in 2026) to refactor and document Python code, generate routine
plotting code, and edit author-written prose. The author critically reviewed
and tested all AI-assisted material and independently verified the mathematical
arguments, experimental design, numerical results, and conclusions. The author
takes full responsibility for the content of this work.

\begingroup
\raggedright
\bibliographystyle{plain}
\bibliography{../mybib,approxpde601_recent}

\newcommand{\noop}[1]{}
\begin{thebibliography}{10}

\bibitem{aki-fod:2018}
M.~Akian and E.~Fodjo.
\newblock From a monotone probabilistic scheme to a probabilistic max-plus
  algorithm for solving {Hamilton--Jacobi--Bellman} equations.
\newblock In D.~Kalise, K.~Kunisch, and Z.~Rao, editors, {\em
  Hamilton--Jacobi--Bellman Equations: Numerical Methods and Applications in
  Optimal Control}, volume~21 of {\em Radon Series on Computational and Applied
  Mathematics}, pages 1--24. De Gruyter, 2018.

\bibitem{bar-sou:1991}
G.~Barles and P.~E. Souganidis.
\newblock Convergence of approximation schemes for fully nonlinear second order
  equations.
\newblock {\em Asymptot.~Anal.}, 4:271--283, 1991.

\bibitem{bar-fos-tel:2017}
P.~L. Bartlett, D.~J. Foster, and M.~J. Telgarsky.
\newblock Spectrally-normalized margin bounds for neural networks.
\newblock {\em Adv.~Neural Inf.~Process.~Syst.}, 30, 2017.

\bibitem{bar-men:2002}
P.~L. Bartlett and S.~Mendelson.
\newblock {R}ademacher and {G}aussian complexities: risk bounds and structural
  results.
\newblock {\em J. Mach. Learn. Res.}, 3:463--482, 2002.

\bibitem{bok-pic-rei:2021}
O.~Bokanowski, A.~Picarelli, and C.~Reisinger.
\newblock Stability and convergence of second order backward differentiation
  schemes for parabolic {Hamilton--Jacobi--Bellman} equations.
\newblock {\em Numer.~Math.}, 148:187--222, 2021.

\bibitem{bon-dev-pet-sie:2026}
A.~Bonito, R.~DeVore, G.~Petrova, and J.~W. Siegel.
\newblock Convergence and error control of consistent {PINN}s for elliptic
  {PDE}s.
\newblock {\em {IMA} J.~Numer.~Anal.}, 46(1):90--148, 2026.

\bibitem{bon-zid:2003}
F.~Bonnans and H.~Zidani.
\newblock Consistency of generalized finite difference schemes for the
  stochastic {HJB} equation.
\newblock {\em {SIAM} J.~Numer.~Anal.}, 41:1008--1021, 2003.

\bibitem{cam-fal:1995}
F.~Camilli and M.~Falcone.
\newblock An approximation scheme for the optimal control of diffusion
  processes.
\newblock {\em {RAIRO} Mod\'el.~Math.~Anal.~Num\'er.}, 29(1):97--122, 1995.

\bibitem{drm:2021}
T.~De~Ryck, S.~Lanthaler, and S.~Mishra.
\newblock On the approximation of functions by tanh neural networks.
\newblock {\em Neural Netw.}, 143:732--750, 2021.

\bibitem{dr-mis:2022}
T.~De~Ryck and S.~Mishra.
\newblock Error analysis for physics-informed neural networks ({PINN}s)
  approximating {K}olmogorov {PDE}s.
\newblock {\em Adv. Comput. Math.}, 48(6):79, 2022.

\bibitem{dr-mis:2024}
T.~De~Ryck and S.~Mishra.
\newblock Numerical analysis of physics-informed neural networks and related
  models in physics-informed machine learning.
\newblock {\em Acta Numer.}, 33:633--713, 2024.

\bibitem{deb-jak:2012}
K.~Debrabant and E.~R. Jakobsen.
\newblock Semi-{L}agrangian schemes for linear and fully non-linear diffusion
  equations.
\newblock {\em Math.~Comp.}, 82:1433--1462, 2013.

\bibitem{e-etal:2017}
W.~E, J.~Han, and A.~Jentzen.
\newblock Deep learning-based numerical methods for high-dimensional parabolic
  partial differential equations and backward stochastic differential
  equations.
\newblock {\em Commun.~Math.~Stat.}, 5:349--380, 2017.

\bibitem{ehr-azm-haa:2026}
T.~Ehring, B.~Azmi, and B.~Haasdonk.
\newblock Recovery of the optimal control value function in reproducing kernel
  {Hilbert} spaces from verification conditions.
\newblock {\em Math. Control Signals Systems}, 2026.
\newblock Published online 26 August 2026.

\bibitem{fah-tou-war:2011}
A.~Fahim, N.~Touzi, and X.~Warin.
\newblock A probabilistic numerical method for fully nonlinear parabolic
  {PDE}s.
\newblock {\em Ann.~Appl.~Probab.}, 21:1322--1364, 2011.

\bibitem{fen-glo-nei:2013}
X.~Feng, R.~Glowinski, and M.~Neilan.
\newblock Recent developments in numerical methods for fully nonlinear second
  order partial differential equations.
\newblock {\em {SIAM} Rev.}, 55(2):205--267, 2013.

\bibitem{fen-lew:2021}
X.~Feng and T.~Lewis.
\newblock A narrow-stencil finite difference method for approximating viscosity
  solutions of {Hamilton--Jacobi--Bellman} equations.
\newblock {\em {SIAM} J. Numer. Anal.}, 59(2):886--924, 2021.

\bibitem{fle-son:2006}
W.~H. Fleming and H.~M. Soner.
\newblock {\em Controlled {M}arkov processes and viscosity solutions}.
\newblock Springer-Verlag, New York, 2nd edition, 2006.

\bibitem{gal-tra:2024}
D.~Gallistl and N.~T. Tran.
\newblock Minimal residual discretization of a class of fully nonlinear
  elliptic {PDE}.
\newblock {\em {IMA} J. Numer. Anal.}, 2025.
\newblock Published online; {DOI} 10.1093/imanum/draf075.

\bibitem{gaz-gka-mak:2025}
D.~Gazoulis, I.~Gkanis, and C.~G. Makridakis.
\newblock On the stability and convergence of physics informed neural networks.
\newblock {\em {IMA} J.~Numer.~Anal.}, 2025.
\newblock Published online 28 October 2025.

\bibitem{gol-rak-sha:2018}
N.~Golowich, A.~Rakhlin, and O.~Shamir.
\newblock Size-independent sample complexity of neural networks.
\newblock In {\em Proc.~31st Conf.~Learn.~Theory ({COLT})}, volume~75 of {\em
  Proc.~Mach.~Learn.~Res.}, pages 297--299, 2018.

\bibitem{guo-etal:2015}
W.~Guo, J.~Zhang, and J.~Zhuo.
\newblock A monotone scheme for high-dimensional fully nonlinear {PDE}s.
\newblock {\em Ann.~Appl.~Probab.}, 25:1540--1580, 2015.

\bibitem{han-jen-e:2018}
J.~Han, A.~Jentzen, and W.~E.
\newblock Solving high-dimensional partial differential equations using deep
  learning.
\newblock {\em Proc. Natl. Acad. Sci. {USA}}, 115(34):8505--8510, 2018.

\bibitem{hoe:1963}
W.~Hoeffding.
\newblock Probability inequalities for sums of bounded random variables.
\newblock {\em J.~Amer.\ Statist.\ Assoc.}, 58:13--30, 1963.

\bibitem{hof-sun-coh:2024}
W.~Hofgard, J.~Sun, and A.~Cohen.
\newblock Convergence of the deep galerkin method for finite state mean field
  control problems, 2024.
\newblock arXiv:2405.13346.

\bibitem{hor:1991}
K.~Hornik.
\newblock Approximation capabilities of multilayer feedforward networks.
\newblock {\em Neural Networks}, 4:251--257, 1991.

\bibitem{hor-etal:1990}
K.~Hornik, M.~Stinchcombe, and H.~White.
\newblock Universal approximation of an unknown mapping and its derivatives
  using multilayer feedforward networks.
\newblock {\em Neural Networks}, 3:551--560, 1990.

\bibitem{bac-hur-pha-lan:2022}
C.~Hur\'e, H.~Pham, A.~Bachouch, and N.~Langren\'e.
\newblock Deep neural networks algorithms for stochastic control problems on
  finite horizon: convergence analysis.
\newblock {\em {SIAM} J. Numer. Anal.}, 59(1):525--557, 2021.

\bibitem{hur-pha-war:2020}
C.~Hur\'e, H.~Pham, and X.~Warin.
\newblock Deep backward schemes for high-dimensional nonlinear {PDE}s.
\newblock {\em Math. Comp.}, 89(324):1547--1579, 2020.

\bibitem{hut-jen-kru-ngu:2020}
M.~Hutzenthaler, A.~Jentzen, T.~Kruse, and T.~A. Nguyen.
\newblock A proof that rectified deep neural networks overcome the curse of
  dimensionality in the numerical approximation of semilinear heat equations.
\newblock {\em {SN} Partial Differ. Equ. Appl.}, 1(2):10, 2020.

\bibitem{kan:1990b}
E.~J. Kansa.
\newblock Multiquadrics---a scattered data approximation scheme with
  applications to computational fluid-dynamics---{II}.
\newblock {\em Computers Math.~ Applic.}, 19:147--161, 1990.

\bibitem{kim-etal:2026pi}
Y.~Kim, M.~Kim, Y.~Kim, and N.~Cho.
\newblock Physics-informed policy iteration for high-dimensional
  {Hamilton--Jacobi--Bellman} equations: Interior error bounds without boundary
  data, 2026.
\newblock arXiv:2508.01718, version 2.

\bibitem{kin-ba:2015}
D.~P. Kingma and J.~Ba.
\newblock {A}dam: a method for stochastic optimization.
\newblock In {\em Proc.~Int.~Conf.~Learn.~Repr.~({ICLR})}, 2015.

\bibitem{kry:1987}
N.~V. Krylov.
\newblock {\em Nonlinear elliptic and parabolic equations of the second order}.
\newblock D. Reidel Publishing Co., Dordrecht, 1987.

\bibitem{kus-dup:2001}
H.~J. Kushner and P.~Dupuis.
\newblock {\em Numerical methods for stochastic control problems in continuous
  time}.
\newblock Springer-Verlag, New York, 2nd edition, 2001.

\bibitem{liu-noc:1989}
D.~C. Liu and J.~Nocedal.
\newblock On the limited memory {BFGS} method for large scale optimization.
\newblock {\em Math.~Program.}, 45:503--528, 1989.

\bibitem{liu:2026}
J.~Liu.
\newblock Verifiable error bounds for physics-informed neural network solutions
  of {Lyapunov} and {Hamilton--Jacobi--Bellman} equations.
\newblock {\em {IEEE} Control Syst. Lett.}, 10:403--408, 2026.

\bibitem{men-etal:2024}
Y.~Meng, R.~Zhou, A.~Mukherjee, M.~Fitzsimmons, C.~Song, and J.~Liu.
\newblock Physics-informed neural network policy iteration: Algorithms,
  convergence, and verification.
\newblock In {\em Proceedings of the 41st International Conference on Machine
  Learning}, volume 235 of {\em Proc. Mach. Learn. Res.}, pages 35378--35403,
  2024.

\bibitem{mis-mol:2022}
S.~Mishra and R.~Molinaro.
\newblock Estimates on the generalization error of physics-informed neural
  networks for approximating a class of inverse problems for {PDE}s.
\newblock {\em IMA J.~Numer.~Anal.}, 42(2):981--1022, 2022.

\bibitem{nak:2014b}
Y.~Nakano.
\newblock An approximation scheme for stochastic controls in continuous time.
\newblock {\em Jpn.~J.~Ind.~Appl.~Math.}, 31:681--696, 2014.

\bibitem{nak:2017}
Y.~Nakano.
\newblock Convergence of meshfree collocation methods for fully nonlinear
  parabolic equations.
\newblock {\em Numer.~Math.}, 136:703--723, 2017.
\newblock (See also {\tt arxiv:1408.5195[math.NA]}.)\noop{.}

\bibitem{nak:2019a}
Y.~Nakano.
\newblock Kernel-based collocation methods for {Z}akai equations.
\newblock {\em Stoch. Partial Differ. Equ. Anal. Comput.}, 7:476--494, 2019.
\newblock (See also {\tt arxiv:1710.09090[math.NA]}.)\noop{.}

\bibitem{nak:2025x}
Y.~Nakano.
\newblock Convergence of differentiable non-monotone schemes for fully
  nonlinear parabolic equations.
\newblock arXiv:1803.09446, 2026.

\bibitem{nei-sal-zha:2017}
M.~Neilan, A.~J. Salgado, and W.~Zhang.
\newblock Numerical analysis of strongly nonlinear {PDE}s.
\newblock {\em Acta Numer.}, 26:137--303, 2017.

\bibitem{pag-pha-pri:2004}
G.~Pag{\`e}s, H.~Pham, and J.~Printems.
\newblock An optimal {M}arkovian quantization algorithm for multidimensional
  stochastic control problems.
\newblock {\em Stoch. Dyn.}, 4:501--545, 2004.

\bibitem{pha:2009}
H.~Pham.
\newblock {\em Continuous-time stochastic control and optimization with
  financial applications}.
\newblock Springer, Berlin, 2009.

\bibitem{rai-per-kar:2019}
M.~Raissi, P.~Perdikaris, and G.~E. Karniadakis.
\newblock Physics-informed neural networks: a deep learning framework for
  solving forward and inverse problems involving nonlinear partial differential
  equations.
\newblock {\em J.~Comput.~Phys.}, 378:686--707, 2019.

\bibitem{rei-rot:2017}
C.~Reisinger and J.~Rotaetxe~Arto.
\newblock Boundary treatment and multigrid preconditioning for semi-lagrangian
  schemes applied to {Hamilton--Jacobi--Bellman} equations.
\newblock {\em J.~Sci.~Comput.}, 72:198--230, 2017.

\bibitem{sir-spi:2018}
J.~Sirignano and K.~Spiliopoulos.
\newblock {DGM}: A deep learning algorithm for solving partial differential
  equations.
\newblock {\em J.~Comput.~Phys.}, 375:1339--1364, 2018.

\bibitem{sme-sul:2013}
I.~Smears and E.~S\"uli.
\newblock Discontinuous {G}alerkin finite element approximation of
  nondivergence form elliptic equations with {C}ord\`es coefficients.
\newblock {\em {SIAM} J. Numer. Anal.}, 51(4):2088--2106, 2013.

\bibitem{sme-sul:2014}
I.~Smears and E.~S\"uli.
\newblock Discontinuous {G}alerkin finite element approximation of
  {H}amilton--{J}acobi--{B}ellman equations with {C}ordes coefficients.
\newblock {\em {SIAM} J. Numer. Anal.}, 52(2):993--1016, 2014.

\bibitem{sme-sul:2016}
I.~Smears and E.~S\"uli.
\newblock Discontinuous {G}alerkin finite element methods for time-dependent
  {H}amilton--{J}acobi--{B}ellman equations with {C}ordes coefficients.
\newblock {\em Numer. Math.}, 133(1):141--176, 2016.

\bibitem{wen:2005}
H.~Wendland.
\newblock {\em Scattered data approximation}.
\newblock Cambridge University Press, Cambridge, 2005.

\bibitem{zei-mas-mar:2025}
M.~Zeinhofer, R.~Masri, and K.-A. Mardal.
\newblock A unified framework for the error analysis of physics-informed neural
  networks.
\newblock {\em {IMA} J.~Numer.~Anal.}, 45(5):2988--3025, 2025.

\end{thebibliography}
\endgroup

\end{document}